\documentclass{article}
\usepackage{graphicx} 

\usepackage[
backend=biber,
style=alphabetic,
sorting=ynt
]{biblatex}
\usepackage[margin=1.5in]{geometry}

\usepackage{amsmath, amsthm, amssymb, dsfont, amsfonts, color, soul, mathtools, enumerate}

\theoremstyle{definition}
\newtheorem{theorem}{Theorem}

\newtheorem{proposition}[theorem]{Proposition}
\newtheorem{lemma}[theorem]{Lemma}

\newtheorem{remark}[theorem]{Remark}

\renewcommand{\bar}{\overline}

\renewcommand{\P}{\mathds{P}}
\newcommand{\E}{\mathds{E}}
\newcommand{\diag}{\text{diag}}

\usepackage{tocloft}
\usepackage[colorlinks=true,linkcolor=blue,citecolor=blue,urlcolor=blue]{hyperref}

\title{Limit Profiles for Separation Distance}
\author{
Peter E. Francis\thanks{Department of Mathematics, Stony Brook University,
Stony Brook, NY 11794. E-mail: {\tt peter.e.francis@stonybrook.edu}. Funded by DMS-2346986.}
\and Evita Nestoridi \thanks{Department of Mathematics, Stony Brook University,
Stony Brook, NY 11794. E-mail: {\tt evrydiki.nestoridi@stonybrook.edu}. Funded by DMS-2346986 and MPS-TSM-00007955.}
}

\begin{document}
\maketitle

\begin{abstract}
This paper studies limit profiles for the separation distance. A limit profile records the limiting shape of the distance to stationarity inside the cutoff window, at times of the form $t_n+cw_n$. We start with two famous card shuffles, a general setup for inverse riffle shuffles and random transpositions, and we determine their separation distance limit profiles. We then develop a spectral comparison technique and study  continuity properties in the style of \cite{Nestoridi, NestoridiContinuity}, adapted to separation distance. The comparison method is illustrated through random transpositions, as well as random walks on product groups and the hypercube.
\end{abstract}

\tableofcontents

\newpage
\section{Introduction}
One of the most effective ways to understand separation distance is through strong stationary times \cite{AldousDiaconisShuffling,AldousDiaconis,DiaconisFillDuality,NestoridiHyperplaneSST}. A strong stationary time $T$ is a stopping time for a Markov chain, whose definition implies that the separation distance is bounded above by the tail $\P[T>t]$; when $T$ is optimal, this upper bound is an equality. Thus, whenever an explicit optimal strong stationary time is available, the cutoff window and the limit profile can often be read directly from the limiting tail of $T$. 

This point of view is especially natural for shuffling examples. The modern mathematical study of riffle shuffling begins with the Gilbert--Shannon--Reeds model. Aldous and Diaconis used strong stationary times to study card shuffling \cite{AldousDiaconisShuffling}, and Bayer and Diaconis proved the classical ``seven shuffles'' theorem for the Gilbert--Shannon--Reeds shuffle \cite{BayerDiaconis1992}. Lalley studied biased $p$-shuffles and identified the cold-spot obstruction \cite{Lalley2000}; Fulman developed the combinatorics of biased riffle shuffles \cite{FulmanBiasedRiffle}; and Sellke later proved cutoff for asymmetric riffle shuffles, confirming Lalley's conjecture \cite{Sellke2022}. Recent work of Sellke, Shi, and Wang proves universality for riffle shuffles with general pile-size distributions, including uniformly random cuts and exact bisections \cite{SellkeShiWang2025}. In the inverse riffle-type shuffles studied below, each card accumulates a selection history, and the relevant optimal strong stationary time is the first time all card histories are distinct. Our first contribution is to characterize the separation limit profile for a general class of inverse riffle shuffles.

Let $P$ be the transition matrix of an irreducible Markov chain on a finite state space $X$ with stationary measure $\pi$. The separation distance is
$$s(t):=\max_{x\in X}s_x(t),\qquad s_x(t):=\max_{y\in X}\left\{1-\frac{P^t(x,y)}{\pi(y)}\right\}.$$
It is stronger than total variation distance in the sense that $d(t)\leq s(t)$, where
$$d(t):=\max_{x\in X}\left\{\frac12\sum_{y\in X}|P^t(x,y)-\pi(y)|\right\}.$$
For reversible chains one also has $s(2t)\leq4d(t)$, but total variation cutoff and separation cutoff are not equivalent in general \cite{Jonathan}. In this paper, all cutoff and limit-profile statements are made with respect to $s(t)$.

A sequence of chains exhibits separation cutoff at times $t_n$ with window $w_n$ if $w_n=o(t_n)$ and
$$\lim_{c\to\infty}\lim_{n\to\infty}s(t_n-cw_n)=1,\qquad \lim_{c\to\infty}\lim_{n\to\infty}s(t_n+cw_n)=0.$$
The corresponding limit profile, when it exists, is the function
$$\Phi(c):=\lim_{n\to\infty}s(t_n+cw_n),\qquad c\in\mathbb R.$$
Limit profiles in total variation have been studied extensively \cite{Teyssier,NestoridiOleskerTaylor,Nestoridi,NestoridiOleskerTaylorProjections,BufetovNejjarASEP,OleskerTaylorSchmidBL}. More recently, Teyssier showed that every cutoff profile can occur for a suitable family of Markov chains \cite{TeyssierEveryProfile}. Our first goal is to compute separation profiles in examples where an optimal strong stationary time is available.
\begin{theorem}[Inverse Riffle Shuffle with Concentrated Pile-Size Law]\label{thm:intro-general-riffle}
Let $f_n$ be a probability distribution on $[n]$, and consider the inverse riffle-type shuffle which chooses $K_n$ with law $f_n$, then moves a uniformly chosen set of $K_n$ cards to the top. Put $Y_n:=K_n/n$, and assume that $Y_n\to\alpha$ in probability for some $\alpha\in(0,1)$, with
$\operatorname{Var}(Y_n)=o\left(\frac1{\log n}\right).$
Define
$$q_{2,n}:=\sum_{k=1}^n f_n(k)\frac{\binom{k}{2}+\binom{n-k}{2}}{\binom{n}{2}}.$$
If $t_n$ is any sequence of integer times such that
$$t_n=\frac{2\log n+c+o(1)}{-\log q_{2,n}},$$
then
$$s_n(t_n)\to 1-\exp\left(-\frac12e^{-c}\right).$$
\end{theorem}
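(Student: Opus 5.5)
We use the strong stationary time recorded in the introduction. At each step, mark each card $1$ if it is among the $K_n$ cards moved to the top and $0$ otherwise. Card $i$ then carries a history word $h_i(t)\in\{0,1\}^t$. Let $T$ be the first time at which all $n$ histories are distinct. At that time the relative order of the cards is a deterministic function of the histories (sort by the reversed words) together with independent uniform permutations within classes of equal history. Since every class is then a singleton, and the labels of the cards are exchangeable given the history multiset, the deck is uniform at time $T$ and independent of $T$. Hence $T$ is a strong stationary time. To see that it is optimal, take the starting order and reverse it. The reversed permutation has positive probability at time $t$ only if all histories are distinct, since two cards with identical histories keep their original relative order. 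So $P^t(\mathrm{id},\mathrm{rev})/\pi(\mathrm{rev})=\P[T\le t]\cdot n!/n!$, up to checking that, conditionally on distinctness, the law is uniform. This gives $s_n(t)=\P[T>t]$. It therefore suffices to show $\P[T>t_n]\to 1-\exp(-\tfrac12 e^{-c})$.

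Let $N_t$ be the number of pairs $\{i,j\}$ with $h_i(t)=h_j(t)$, so that $\{T>t\}=\{N_t\ge1\}$. I would prove $N_{t_n}\Rightarrow\mathrm{Poisson}(\tfrac12e^{-c})$ by the method of factorial moments. For a fixed pair, each step independently agrees with probability exactly $q_{2,n}$, because given $K_n=k$ the pair is jointly selected or jointly unselected with probability $(\binom k2+\binom{n-k}2)/\binom n2$. Hence $\E N_t=\binom n2 q_{2,n}^t$. At $t=t_n$ this equals $\tfrac{n^2}{2}e^{-2\log n-c+o(1)}\to\tfrac12e^{-c}$.

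For the $r$-th factorial moment I would sum over $r$-tuples of distinct pairs and split them by the structure of the graph they span.
\begin{itemize}
\item If the pairs form a matching on $2r$ cards, the per-step agreement probability is $\E\big[(Y^2+(1-Y)^2)^r\big]+O(1/n)$. Raised to the power $t_n\asymp\log n$, this must be compared with $q_{2,n}^{rt_n}$.
\item If the graph has a component of at least three cards, the event forces three cards to share a history. That costs $q_{3,n}^t$ with $q_{3,n}\approx\E[Y^3+(1-Y)^3]<q_{2,n}^{3/2}$ in the limit, while there are only about $n^3$ such configurations. So these contributions are $n^{3}q_{3}^{t_n}=o(1)$, because $q_3<q_2^{3/2}$ strictly whenever $\alpha\in(0,1)$.
\end{itemize}
Then $\E[(N)_r]\to(\tfrac12e^{-c})^r$ for every $r$, which gives the Poisson limit and hence the stated limit of $\P[N_{t_n}\ge1]$.

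The main obstacle is the matching term, which is where the variance hypothesis enters. Write $g(y)=y^2+(1-y)^2$. We need
$$\Big(\E[g(Y_n)^r]\big/(\E g(Y_n))^r\Big)^{t_n}\to1.$$
A Taylor expansion around $\E Y_n$ gives $\E[g^r]/(\E g)^r=1+O(\operatorname{Var}(Y_n))$. Since $t_n=\Theta(\log n)$, the ratio raised to the power $t_n$ is $\exp(O(\operatorname{Var}(Y_n)\log n))$, which tends to $1$ precisely because $\operatorname{Var}(Y_n)=o(1/\log n)$. The exact finite-$n$ corrections are handled the same way. These come from drawing without replacement, for example $\binom k2/\binom n2$ versus $(k/n)^2$, and are $O(1/n)$ relative errors. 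I would also check that $-\log q_{2,n}$ stays bounded away from $0$ and $\infty$, which holds because $\alpha\in(0,1)$, so that $t_n\asymp\log n$ and all these error terms are uniformly controlled.
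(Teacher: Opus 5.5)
Your strategy matches the paper's. You take $T$ as the optimal strong stationary time, prove $N_{t_n}\to\mathrm{Poisson}(\tfrac12e^{-c})$ by factorial moments, and use $\operatorname{Var}(Y_n)=o(1/\log n)$ exactly where the paper does, on tuples of disjoint pairs. The paper Taylor-expands in $h(Y_n)$ rather than $Y_n$, which makes no difference. Your reversed-deck argument for optimality is a correct, self-contained substitute for the paper's citation of the hyperplane-walk literature: equal histories preserve relative order, so $n!\,P^t(\mathrm{id},\mathrm{rev})=\P[T\le t]$. (Minor point: cards with equal histories keep their original order rather than being uniformly permuted within their class, as you yourself use later. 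Uniformity at $T$ comes from exchangeability of the labels.)

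The step that fails as written is the overlapping-tuple bound. For $r\ge3$, an overlapping ordered $r$-tuple of pairs can span any number $v$ of labels with $3\le v\le 2r-1$. So there are $\Theta(n^v)$ such tuples, not about $n^3$, and keeping only the constraint that three cards share a history is then too lossy. Concretely, take $n$ even and $f_n=\delta_{n/2}$, which satisfies the hypotheses. Then $q_{3,n}=q_{2,n}^2(1-O(1/n))$, so $q_{2,n}^{t_n}\asymp n^{-2}$ and $q_{3,n}^{t_n}\asymp n^{-4}$. For the pattern $\{1,2\},\{2,3\},\{4,5\}$ your bound is $n^5q_{3,n}^{t_n}\asymp n\to\infty$. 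Even for the path $\{1,2\},\{2,3\},\{3,4\}$, it gives only $O(1)$.

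The fix, which is what the paper does, is to keep every constraint of the pattern. Suppose the tuple's graph has $v$ vertices and components of sizes $m_1,\dots,m_\ell\ge2$.
\begin{enumerate}
\item Conditionally on $K_n$, the one-step probability that each component is entirely selected or entirely unselected is $\prod_i\bigl(Y_n^{m_i}+(1-Y_n)^{m_i}\bigr)+O(1/n)$. Since $Y_n\to\alpha$ in probability, it tends to $\prod_i\bigl(\alpha^{m_i}+(1-\alpha)^{m_i}\bigr)$.
\item Each factor satisfies $\alpha^m+(1-\alpha)^m\le(\alpha^2+(1-\alpha)^2)^{m/2}$, strictly for $m\ge3$, by monotonicity of $\ell^p$ norms of $(\alpha,1-\alpha)$.
\item For fixed $r$ there are finitely many patterns, so some $\delta>0$ bounds every overlapping pattern's one-step probability by $q_{2,n}^{v/2}(1-\delta)$ for all large $n$.
\item The pattern's total contribution is then $O\bigl(n^vq_{2,n}^{vt_n/2}(1-\delta)^{t_n}\bigr)=O\bigl((1-\delta)^{t_n}\bigr)\to0$.
\end{enumerate}
With this estimate in place of $n^3q_{3,n}^{t_n}$, your argument is complete.
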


Special cases of Theorem \ref{thm:intro-general-riffle} are covered in \cite{DiaconisFulmanShuffling}; the analysis in the proof is interesting in its own right because it follows the histories recorded by the cards.
When $K_n$ is a discrete random variable, the model has a different profile. It is not a special case of Theorem \ref{thm:intro-general-riffle}, since $K_n/n$ converges in distribution to a uniform random variable on $[0,1]$ and $\operatorname{Var}(K_n/n)\to1/12$.
\begin{theorem}[Uniformly Driven Inverse Riffle Shuffle]\label{thm:intro-uniform-riffle}
At each step, choose $K_n$ uniformly from $[n]$, then choose $K_n$ cards uniformly without replacement and move them to the top, preserving relative order. Let $C_{\mathrm{Cat}}$ denote Catalan's constant, and set
$$a:=\frac{4}{4-\pi}\approx4.65979,\qquad b:=\frac{4\sqrt{4+\pi\log2-4C_{\mathrm{Cat}}-\pi^2/4}}{(4-\pi)^{3/2}}\approx1.08247.$$
Let $\Phi_{\mathrm G}$ be the standard normal distribution function. If
$$t_n(c):=\left\lfloor a\log n+bc\sqrt{\log n}\right\rfloor,$$
then
$$s_n(t_n(c))\to1-\Phi_{\mathrm G}(c).$$
Thus the shuffle has separation cutoff at time $a\log n$ with a $\sqrt{\log n}$ window.
\end{theorem}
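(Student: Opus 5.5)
The plan is to use the strong stationary time described in the introduction, whose tail equals the separation distance, and to compute that tail by conditioning on the pile sizes. Each card records its history: a string of bits saying, at each step, whether it was among the $K_n$ cards moved to the top. Let $T$ be the first time all $n$ histories are distinct. By the classical Aldous--Diaconis argument, given the histories the deck is in lexicographic order of the reversed histories. Ties are broken by a uniformly random relative order, so once all histories are distinct the deck is uniform. Since $K_n$ is independent of which cards are selected, this argument goes through unchanged here. Moreover $T$ has a halting state: the arrangement in which the original order is reversed is reachable only if every pair has been separated. So $T$ is optimal and $s_n(t)=\P[T>t]$.

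Next I condition on $\mathcal K=(K^{(1)},\dots,K^{(t)})$, which are i.i.d.\ uniform on $[n]$. Let $N_t$ be the number of pairs of cards with identical histories. A fixed pair agrees at step $i$ with probability
$$q(K^{(i)})=\frac{\binom{K^{(i)}}2+\binom{n-K^{(i)}}2}{\binom n2},$$
so $\E[N_t\mid\mathcal K]=\binom n2\prod_{i\le t}q(K^{(i)})$. Set $L_i=\log q(K^{(i)})$. Because $q(k)\ge c_0>0$ uniformly in $k$, we have $L_i=\log(U^2+(1-U)^2)+O(1/n)$ with $U$ uniform on $[0,1]$. A direct integration gives
$$\mu:=\E\log(U^2+(1-U)^2)=\frac{\pi}{2}-2=-\frac2a.$$
The variance $\sigma^2$ comes out in terms of Catalan's constant, and I expect $b$ to match $\sigma a^{3/2}/2$. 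By the CLT (Lindeberg suffices, since the $L_i$ are bounded and the $O(1/n)$ errors sum to $o(1)$), at $t=t_n(c)$ we get $\sum_iL_i=t\mu+\sigma\sqrt{a\log n}\,Z_n+o(\sqrt{\log n})$ with $Z_n\Rightarrow N(0,1)$. Then
$$\log\E[N_t\mid\mathcal K]=\sqrt{\log n}\,\bigl(\sigma\sqrt a\,Z_n-\tfrac{2b}{a}c\bigr)+o(\sqrt{\log n}),$$
and the prefactor is $\sigma\sqrt a(Z_n-c)$ with the above choice of $b$. Hence the conditional mean tends to $0$ on $\{Z_n<c-\epsilon\}$ and to $\infty$ on $\{Z_n>c+\epsilon\}$.

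It remains to show that $\P[N_t>0\mid\mathcal K]$ tends to $0$ or $1$ accordingly. Integrating over $\mathcal K$ and letting $\epsilon\to0$ then yields $1-\Phi_{\mathrm G}(c)$. The case $\{Z_n<c-\epsilon\}$ follows from Markov's inequality. For the case $\{Z_n>c+\epsilon\}$ I use the conditional second moment, $\P[N>0]\ge(\E N)^2/\E N^2$, and split $\E[N^2\mid\mathcal K]$ into three parts:
\begin{itemize}
\item Equal pairs contribute $\E N$.
\item Pairs sharing one card contribute about $n^3\prod_i r(K^{(i)})$, where $r(k)\approx (k/n)^3+(1-k/n)^3$. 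By the $\ell^3\le\ell^2$ norm inequality, $r\le q^{3/2}(1+O(1/n))$. So this term is at most $n^{3}\prod q^{3/2}$, which is $o\bigl((n^2\prod q)^2\bigr)$ exactly because $n^2\prod q\to\infty$.
\item Disjoint pairs contribute $(\E N)^2(1+O(1/n))^{t}=(\E N)^2(1+o(1))$, since sampling without replacement changes each step's joint agreement probability only by a factor $1+O(1/n)$, and $t=O(\log n)$.
\end{itemize}

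I expect the main obstacle to be this second-moment step, specifically making the per-step $1+O(1/n)$ corrections uniform over all $k\in[n]$, including extreme pile sizes near $0$ or $n$ where $q$ is close to $1$. These errors must stay negligible after multiplying over $t$ steps, and the bound $r\le q^{3/2}(1+O(1/n))$ must hold uniformly in $k$. The explicit evaluation of $\mu$ and $\sigma^2$, which produces $a$ and $b$, is routine calculus involving $\int\log(1+x^2)$-type integrals and Catalan's constant.
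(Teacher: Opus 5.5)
Your proposal is correct and follows the same architecture as the paper's proof. Both use the optimal strong stationary time and condition on the pile sizes. Both prove a CLT for $\log\Lambda_n=\log\E[N_t\mid\mathcal K]$, with $a\mu=-2$ and $b=\sigma a^{3/2}/2$ (equivalently $b\mu=-\sqrt{av}$). Both close with Markov's inequality on one side and a conditional second moment on the other.

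The differences are local.
\begin{enumerate}
\item \textbf{Optimality.} You exhibit a halting state (the reversed deck), whereas the paper appeals to the hyperplane-walk construction. Both work. One correction to your wording: ties are broken by the original order, not randomly. Uniformity at $T$ comes instead from exchangeability of the labels given the multiset of histories, and $\{T=t\}$ depends only on that multiset.
\item \textbf{Pairs sharing a card.} You bound the exact three-card agreement probability per step by $r(k)\le y^3+(1-y)^3\le h(y)^{3/2}$, where $y=k/n$ and $h(y):=y^2+(1-y)^2$. This gives a contribution $O(\Lambda_n^{3/2})$, which is $o(\Lambda_n^2)$ exactly on the event $\Lambda_n\to\infty$, the only place the second moment is needed. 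The paper instead shows $\max_w p_w=o_{\P}(n^{-1})$. That requires a law of large numbers for $\log\max(Y_r,1-Y_r)$ and the numerical check $a(\log2-1)<-1$, and it yields the sharper bound $o(\Lambda_n)$ for this term, valid whatever the size of $\Lambda_n$. Your route is more elementary and avoids that extra computation; the paper's gives a regime-free variance estimate.
\end{enumerate}

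The obstacle you anticipate is not serious. Uniformly in $k$,
$$q(k)=h(k/n)-\frac{1-h(k/n)}{n-1}\ge\frac12-O(1/n).$$
So every additive $O(1/n)$ per-step error, including near extreme pile sizes, is a multiplicative $1+O(1/n)$ error. In particular $h(y)^{3/2}\le q(k)^{3/2}(1+O(1/n))$. Since $t=O(\log n)$, these factors multiply to $1+o(1)$.
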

For $k$-random-to-top, two regimes lead to two different profile shapes. The dense regime is the fixed-pile-size specialization of Theorem \ref{thm:intro-general-riffle}.
\begin{theorem}[$k$-Random-to-Top]\label{thm:intro-k-random-to-top}
For a sequence $k_n$, let $s_{n,k_n}$ be the separation distance of the shuffle which chooses $k_n$ cards uniformly without replacement and moves them to the top, preserving their relative order. If $k_n\log n/n\to0$, then for every fixed $c\in\mathbb R$,
$$s_{n,k_n}\left(\left\lfloor\frac{n}{k_n}(\log n+c)\right\rfloor\right)\to 1-e^{-e^{-c}}(1+e^{-c}).$$
If instead $k_n/n\to\alpha\in(0,1)$, set $q_{n,k_n}:=(\binom{k_n}{2}+\binom{n-k_n}{2})/\binom{n}{2}$. For any sequence of integer times $t_n$ such that
$$t_n=\frac{2\log n+c+o(1)}{-\log q_{n,k_n}},$$
we have
$$s_{n,k_n}(t_n)\to 1-\exp\left(-\frac12e^{-c}\right).$$
\end{theorem}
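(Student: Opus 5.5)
The second regime is the special case of Theorem \ref{thm:intro-general-riffle} with $f_n=\delta_{k_n}$. Then $Y_n=k_n/n$ is deterministic, so $Y_n\to\alpha$ and $\operatorname{Var}(Y_n)=0$. Also $q_{2,n}=q_{n,k_n}$, so the conclusion follows verbatim. All the work is in the sparse regime $k_n\log n/n\to0$, which I would handle directly through the card-history strong stationary time.

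\emph{Setup.} Label each card at each step by a bit: $1$ if it was among the $k_n$ chosen cards, $0$ otherwise. The deck order is then determined by the sequence of bit strings, sorted reverse-lexicographically by most recent step. The time $T$ at which all $n$ histories are distinct is a strong stationary time, since conditioned on $T=t$ and on the set of histories, the assignment of cards to histories is uniform. For the upper bound $s(t)\le\P[T>t]$, I would argue optimality as in the proof of Theorem \ref{thm:intro-general-riffle}. There is a halting state, namely the identity arrangement when started from the identity, which can only be reached once all histories are distinct; this gives $s(t)=\P[T>t]$. So it suffices to show $\P[T>t_n]\to1-e^{-e^{-c}}(1+e^{-c})$ for $t_n=\lfloor (n/k_n)(\log n+c)\rfloor$.

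\emph{Coupon-collector reduction.} In the sparse regime, a given card is selected at a given step with probability $p=k_n/n\to0$, so over $t_n$ steps each card is selected about $\log n+c$ times. Let $Z$ be the number of cards never selected by time $t_n$. Then $\E Z=n(1-p)^{t_n}\to e^{-c}$, where I use $t_np^2\approx (k_n/n)\log n\to0$ to kill the second-order term. Any two never-selected cards share the all-zero history. The claim is therefore that $T\le t_n$ iff $Z\le1$ and no pair of selected cards shares a history, up to an event of vanishing probability.

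\emph{Other collisions are negligible.} For two distinct cards, the chance that they agree at a given step is $1-2p(1-p)+O(\cdot)$; more precisely it is $q=1-2\frac{k(n-k)}{n(n-1)}\approx 1-2p$. Hence
\[
\E\#\{\text{colliding pairs}\}\approx \binom n2 q^{t_n}\approx \frac{n^2}{2}e^{-2(\log n+c)}=\tfrac12e^{-2c}.
\]
This is not negligible as it stands. However, it already counts the pairs of never-selected cards, whose expected number is $\approx \tfrac12 (e^{-c})^2$. So I would split the colliding pairs by whether the common history is all zeros. The expected number of pairs colliding on a nonzero history is $\binom n2\left(q^{t}-(1-p)^{2t}\right)$ up to correction. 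Both $q^t$ and $(1-p)^{2t}$ equal $e^{-2t p+O(tp^2)}$, so their difference is $n^{-2}\cdot o(1)$ once $tp^2\to0$; I would expand carefully to check this. Hence pairs colliding on a nonzero history contribute $o(1)$ in expectation, and Markov's inequality removes them.

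\emph{Poisson limit and the main obstacle.} It remains to show $Z\Rightarrow\mathrm{Poisson}(e^{-c})$. I would use factorial moments: $\E\binom Zr=\binom nr\P[\text{$r$ fixed cards never chosen}]^{}=\binom nr\left(\binom{n-r}{k}/\binom nk\right)^{t_n}\to e^{-rc}/r!$, using $\binom{n-r}{k}/\binom nk=(1-p)^r(1+O(rk/n^2))$. The limit is then $\P[Z\le1]\to e^{-e^{-c}}(1+e^{-c})$. The main obstacle is the precise second-order comparison of $q^{t_n}$ with $(1-p)^{2t_n}$, which must be uniform in $k_n$ up to the boundary $k_n\log n/n\to0$. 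This includes the case of bounded $k_n$, where the without-replacement corrections $O(k/n^2)$ per step must be shown to accumulate to $o(1)$ over $t_n\approx (n/k)\log n$ steps.
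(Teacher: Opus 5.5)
This is essentially the paper's proof. The dense case is Theorem~\ref{thm:intro-general-riffle} with $f_n=\delta_{k_n}$. The sparse case sandwiches $s_{n,k}(t)$ between $\P[U_t\ge2]$ and $\P[U_t\ge2]+\E[M_t]$, using a factorial-moment Poisson limit for the never-selected cards and a first-moment bound on pairs sharing a nonzero history.

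The second-order comparison you flag as the main obstacle is avoided in the paper by the bound $(a_n+b_n)^t-b_n^t\le t\,a_n(a_n+b_n)^{t-1}$. Here $a_n$ and $b_n$ are the one-step probabilities that a fixed pair is both selected, respectively both unselected. Since $\binom n2(a_n+b_n)^t$ stays bounded and $t\,a_n=O(k\log n/n)\to0$, the nonzero collisions vanish; your log expansion also works and gives the same rate.

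One slip: started from the identity, the halting state is the reversed deck, not the identity. Two cards with equal histories always keep their original relative order, so only the reversed deck forces all histories to be distinct. The identity itself can recur with repeated histories, for example when the top $k$ cards are selected.
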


Random transpositions form the second classical shuffle treated here. Diaconis and Shahshahani proved cutoff for random transpositions using the representation theory of $S_n$ \cite{DiSh}. Teyssier later obtained the total variation limit profile \cite{Teyssier}. Jain and Sawhney recently proved the hitting-time version in total variation: if $\tau$ is the first time every card has been touched, then the stopped distribution at $\tau$ is $o(1)$ from uniform in total variation, while the distance one step earlier is at least $(1+o(1))e^{-1}$ \cite{JainSawhneyRT}. For separation distance, White constructed a strong stationary time which gives the correct upper bound \cite{White2019}; the representation-theoretic calculation below supplies the matching lower bound, since White's strong stationary time isn't optimal.

\begin{theorem}[Random Transpositions]\label{thm:intro-rt-profile}
Let $P_n$ be the discrete-time random transpositions shuffle on $S_n$, and put $m_n(c):=\lfloor\frac{n}{2}(\ln n+c)\rfloor$. Then, for every fixed $c\in\mathbb R$,
$$s_*^{P_n}(m_n(c))\to1-e^{-e^{-c}}.$$
\end{theorem}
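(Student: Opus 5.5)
The plan is to prove matching upper and lower bounds for $s_*^{P_n}(m_n(c))$; here $s_*$ is the separation distance (from the identity, since the walk is transitive). For the upper bound I will use White's strong stationary time \cite{White2019}, which gives $s(t)\le\P[T>t]$. The first step is to show that, at $t=m_n(c)$, $\P[T>t]\to 1-e^{-e^{-c}}$. I would reduce White's time to a coupon-collector type quantity: essentially the first time all cards have been touched, allowing for the fact that the last untouched card is handled for free. The number of untouched cards at time $\frac n2(\log n+c)$ is asymptotically Poisson with mean $e^{-c}$. This follows by the method of moments: the $r$-th factorial moment is $n^{\underline r}(1-2r/n+O(1/n^2))^t\to e^{-rc}$. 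So the required tail is $1-e^{-e^{-c}}$, up to the exact bookkeeping in White's construction. If White's time exceeds the touching time by a non-negligible amount, I would instead bound the upper tail by the probability that at least two cards are untouched. I expect the fine details of this step to need care.

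For the lower bound I will choose a specific target permutation $y$ and compute $1-P^t(\mathrm{id},y)\,n!$ exactly by Fourier analysis on $S_n$. The natural choice is an $n$-cycle (or, depending on parity, the appropriate conjugacy class maximizing separation; for the lazy version one uses an $n$-cycle). Then
$$P^t(\mathrm{id},y)\,n!=\sum_{\lambda\vdash n} d_\lambda\,\chi_\lambda(y)\,r_\lambda^t,\qquad r_\lambda=\frac1n+\frac{n-1}{n}\frac{\chi_\lambda(\tau)}{d_\lambda}.$$
By Murnaghan--Nakayama, $\chi_\lambda$ of an $n$-cycle vanishes unless $\lambda$ is a hook $(n-k,1^k)$, in which case it equals $(-1)^k$. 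Hooks have $d_\lambda=\binom{n-1}{k}$, and their eigenvalues are explicit: $\chi_\lambda(\tau)/d_\lambda=\frac{(n-k)(n-k-1)/2-k(k+1)/2}{\binom n2}=1-\frac{2k}{n-1}$ up to exact algebra. So the sum becomes
$$\sum_{k=0}^{n-1}(-1)^k\binom{n-1}{k}r_k^t,$$
which has the flavour of an inclusion--exclusion formula.

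With $r_k\approx 1-2k/n$ and $t=\frac n2(\log n+c)$, each term satisfies $\binom{n-1}{k}r_k^t\to e^{-kc}/k!$ for fixed $k$. Hence the sum tends to $e^{-e^{-c}}$, giving $s\ge 1-e^{-e^{-c}}-o(1)$. The main obstacle is justifying this limit uniformly, because the alternating sum has large terms. For $k\ge n/2$ the values $r_k$ are negative and close to $-1$; these are the "sign" side, handled by pairing hook $k$ with hook $n-1-k$ by conjugation/parity, or by working with the lazy chain or the continuous-time chain and depoissonizing. I will first try dominated convergence on $k\le n/2$ using $\binom{n-1}{k}|r_k|^t\le \frac{(ne^{-2t/n})^k}{k!}\cdot e^{O(k^2t/n^2)}$. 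For the negative tail I will use the symmetry $r_{n-1-k}$ versus $r_k$ together with the parity of $t$ for the discrete-time shuffle, choosing $y$ in the correct coset of $A_n$. If the parity issue makes the $n$-cycle unusable, I will fall back to an $(n-1)$-cycle, where similarly only near-hook characters survive.

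Combining the two bounds gives the stated limit.
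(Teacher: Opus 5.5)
Your route is the paper's: White's strong stationary time plus the Poisson count $U_m$ of untouched labels for the upper bound, and the hook expansion of $n!P^m(\mathrm{id},\gamma_n)$ for the lower bound. However, the upper-bound paragraph contains a real error.

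An untouched label is a fixed point of $Y_m$ and remains a singleton block of White's partition. So White's time is at least the all-touched time, and $\P[\tau_n>m]\ge\P[U_m\ge1]\to1-e^{-e^{-c}}$; nothing about the last untouched card is ``free''. Your fallback of bounding the tail by $\P[U_m\ge2]\to1-e^{-e^{-c}}(1+e^{-c})$ is therefore not merely unproved but false. It lies strictly below the lower bound $1-e^{-e^{-c}}$ that you yourself obtain from the $n$-cycle. That ``two unseen cards'' profile belongs to random-to-top, where a single unselected card is harmless; here a single fixed point is exactly what the $n$-cycle detects.

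What the upper bound actually needs is
$$\P\left[\tau_n>m_n(c),\ U_{m_n(c)}=0\right]\to0,$$
i.e.\ that once every label has been touched, White's partition has already coalesced with high probability on this scale. If this failed, the strong-stationary-time route would simply not give a sharp bound, and no fallback of the kind you describe would rescue it. You leave precisely this step open. The paper does not prove it either; it attributes it to White's merge estimates. So you must supply or cite that estimate, not the ``last card free'' heuristic.

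The lower bound is fine, and your parity worries are unnecessary. The walk holds with probability $1/n$, so the $n$-cycle needs no coset adjustment. For $k\le n/2$, dominate using only $1-x\le e^{-x}$, which gives
$$\binom{n-1}{k}(1-2k/n)^{m_n(c)}\le \frac{e^{k(2/n-c)}}{k!}.$$
The extra factor $e^{O(k^2t/n^2)}$ you carry is not needed, and it would destroy the domination near $k=n/2$.

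For $k>n/2$, put $\ell:=n-k$. Then
$$\binom{n-1}{k}|1-2k/n|^{m_n(c)}=\binom{n-1}{\ell-1}(1-2\ell/n)^{m_n(c)}\le \frac{e^{\ell(2/n-c)}}{n(\ell-1)!},$$
so the whole negative-eigenvalue side is $O_c(n^{-1})$. This is what the paper does, and it is the same estimate your conjugation pairing $|r_{n-1-k}|=|r_{k+1}|$ would produce.
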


The second goal of the paper is to move beyond examples where the optimal strong stationary time can be analyzed directly. For this we develop a spectral comparison framework for reversible Markov chains on the same state space. If two chains are simultaneously diagonalizable, their shared eigenbasis gives direct bounds on the difference between their separation distances, allowing a profile or profile bound for one chain to be transferred to a perturbation of it. The basic discrete-time comparison is as follows.
\begin{theorem}[Comparison]\label{dcomp}
If $P$ and $Q$ are matrices for reversible Markov chains on $\Omega$ and are mutually diagonalizable with shared eigenfunctions $\{f_j\}$ and respective eigenvalues $\{p_j\}$ and $\{q_j\}$, then
$$\vert s_x^P(t)-s_x^Q(t)\vert \leq  \max_{y\in\Omega}\sum_{j=1}^{|\Omega|} \vert f_j(x)f_j(y)(p_j^t-q_j^t) \vert.$$
If, in addition, the chains are reversible with respect to the uniform measure, transitive, and $P$ is perfectly transitive, then
$$\left|s_*^P(t)-s_*^Q(t)\right|\leq \sum_{j=1}^{|\Omega|}|p_j^t-q_j^t|.$$
\end{theorem}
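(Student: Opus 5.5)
We will use the spectral expansion of the transition kernel of a reversible chain. Reversibility with respect to $\pi$ means $P$ is self-adjoint on $L^2(\pi)$. We take the shared eigenfunctions $\{f_j\}$ orthonormal in $L^2(\pi)$; this is possible because the matrices commute and are simultaneously diagonalizable by a $\pi$-orthonormal basis. Then
$$\frac{P^t(x,y)}{\pi(y)}=\sum_{j=1}^{|\Omega|}p_j^t f_j(x)f_j(y),\qquad \frac{Q^t(x,y)}{\pi(y)}=\sum_{j=1}^{|\Omega|}q_j^t f_j(x)f_j(y).$$
This is the step where one has to be careful that both chains share the same stationary measure $\pi$ and that the normalization is in $L^2(\pi)$. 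I would note that simultaneous diagonalizability with a common eigenfunction for eigenvalue $1$ (the constants) forces this.

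With the expansion in hand, the first inequality is elementary. Write $s_x^P(t)=\max_y\{1-P^t(x,y)/\pi(y)\}$, and similarly for $Q$. For any two finite families of reals, $|\max_y a_y-\max_y b_y|\le\max_y|a_y-b_y|$. Apply this with $a_y=1-P^t(x,y)/\pi(y)$ and $b_y=1-Q^t(x,y)/\pi(y)$, so that $a_y-b_y=\sum_j f_j(x)f_j(y)(q_j^t-p_j^t)$. The triangle inequality then gives the stated bound $\max_y\sum_j|f_j(x)f_j(y)(p_j^t-q_j^t)|$.

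For the second statement, $\pi$ is uniform. Transitivity means $s_x$ does not depend on $x$, so $s_*^P=s_x^P$ and $s_*^Q=s_x^Q$ for any fixed $x$. I read \emph{perfectly transitive} as giving an eigenbasis whose functions satisfy $|f_j(x)|=1$ for all $x$ and $j$ in the $L^2(\pi)$ normalization, for instance characters of an abelian group or a suitable Gelfand-pair basis. I would check the paper's definition against this. Since $Q$ shares the eigenbasis, the same $f_j$ work for both chains. Then $|f_j(x)f_j(y)|=1$ for every $x,y$, and the first bound collapses to $\sum_j|p_j^t-q_j^t|$, uniformly in $y$.

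The main obstacle is making the second part rigorous from the definition of perfect transitivity. If instead that notion only guarantees that $\sum_j |f_j(x)|^2$ is constant in $x$, I would fall back on an averaging argument. If the definition is representation-theoretic (eigenspaces rather than individual eigenfunctions), I would group eigenfunctions by eigenspace $V$. I would then use the kernel $K_V(x,y)=\sum_{j\in V}f_j(x)f_j(y)$, together with $|K_V(x,y)|\le K_V(x,x)=\dim V$, the last equality following from transitivity and uniform $\pi$. This yields $\sum_V \dim V\,|p_V^t-q_V^t|=\sum_j|p_j^t-q_j^t|$, which is the same bound.
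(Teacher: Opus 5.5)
Your first part is correct and is the paper's argument in cleaner form. The paper fixes a maximizer for one chain and then swaps $P$ and $Q$, which amounts to $|\max_y a_y-\max_y b_y|\le\max_y|a_y-b_y|$.

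The second part has a genuine gap. Your main line reads perfect transitivity as $|f_j(x)|=1$ for all $j,x$. That is not the paper's definition. There it is a combinatorial condition: row $k$ of $P$ is row $1$ permuted by $\sigma_k$, with the $\sigma_k$ distinct. The constant-modulus property also fails for the eigenbases the paper actually uses on $S_n$. The $\ell^2(\pi)$-normalized matrix coefficients $\sqrt{d_\lambda}\,\rho_{ab}$ take the value $\sqrt{d_\lambda}$ at the identity when $a=b$, and $0$ when $a\neq b$.

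In the paper, perfect transitivity enters only through Lemma~\ref{lem:bijective-max}: the terminal state $z(x)$ maximizing $s^P_x$ can be chosen so that $x\mapsto z(x)$ is a bijection. Since $s_x$ does not depend on $x$, one averages the comparison over $x$ with $y=z(x)$, and AM--GM gives
$$\frac1{|\Omega|}\sum_{x}|f_j(x)f_j(z(x))|\le\frac1{2|\Omega|}\sum_x\bigl(f_j(x)^2+f_j(z(x))^2\bigr)=\langle f_j,f_j\rangle_\pi=1.$$
Bijectivity of $z$ is exactly what makes the second square sum equal the first. Your ``averaging argument'' fallback never identifies this ingredient, so as stated it is not yet a proof.

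Your eigenspace fallback is the better idea, but its key step is asserted rather than proved. The identity $K_V(x,x)=\dim V$ does not follow from row-permutation transitivity plus uniform $\pi$. Take simple random walk on a cubic graph that is not vertex-transitive: all rows of $P$ are permutations of one another, yet $P^3(x,x)$ is proportional to the number of triangles at $x$ and can vary. So the spectral projections of $P$ need not have constant diagonal.

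A correct justification goes as follows. The joint eigenprojection $\Pi_V$, with kernel $\pi(y)K_V(x,y)$, is a polynomial in $P$ and $Q$. Hence it commutes with every permutation of $\Omega$ that commutes with both kernels. Suppose such permutations act transitively; translations do this for two random walks on the same group, which is the setting of every application of the transitive bound in the paper. Then $\Pi_V$ has constant diagonal and $K_V(x,x)=\operatorname{tr}\Pi_V=\dim V$. Cauchy--Schwarz inside each joint eigenspace then gives $\sum_{j\in V}|f_j(x)f_j(y)|\le\dim V$. So your first bound is itself at most $\sum_j|p_j^t-q_j^t|$ for every $x$ and $y$.

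With that justification your route is cleaner than the paper's. It needs no bijective maximizer and no averaging over $x$, and it controls both signs of $s^P_*-s^Q_*$ at once. The paper's bijection is built only for $P$, while the direction $s^Q_*>s^P_*$ would also need a bijective maximizer for $Q$. In exchange, the paper's route uses only the normalization of individual eigenfunctions and never groups them into joint eigenspaces.
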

In Section \ref{comp}, we prove a continuous-time version of this theorem and sharper transitive-chain variants, which are especially useful for random walks on groups. The comparison principle is compatible with continuous-time laziness.
\begin{proposition}[Lazy Versions]\label{thm:intro-lazy}
Let $P_n$ be reversible Markov chains, and for $\alpha_n\in(0,1]$ define $Q_n:=(1-\alpha_n)I+\alpha_nP_n$. If the continuous-time chains generated by $P_n$ satisfy
$$s_{P_n}(t_n+cw_n)\to\Phi(c),$$
then the continuous-time chains generated by $Q_n$ satisfy
$$s_{Q_n}\left(\frac{t_n+cw_n}{\alpha_n}\right)\to\Phi(c).$$
\end{proposition}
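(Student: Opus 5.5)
The argument is a direct computation with the continuous-time semigroup, and no comparison is needed. The continuous-time chain generated by a stochastic matrix $P$ has kernel $H_t^P=e^{-t(I-P)}$. For $Q_n=(1-\alpha_n)I+\alpha_nP_n$ we have $I-Q_n=\alpha_n(I-P_n)$. Hence, for every $t\ge0$,
$$H_t^{Q_n}=e^{-t\alpha_n(I-P_n)}=H_{\alpha_n t}^{P_n}.$$

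We also check that the stationary measure is unchanged. Since $\alpha_n>0$, a measure $\pi$ satisfies $\pi Q_n=\pi$ if and only if $\pi P_n=\pi$. So $Q_n$ is reversible with respect to the same $\pi_n$, and it is irreducible whenever $P_n$ is.

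Separation is a function of the time-$t$ kernel and $\pi$ only. Therefore
$$s_{Q_n}(t)=\max_{x,y}\left\{1-\frac{H_t^{Q_n}(x,y)}{\pi_n(y)}\right\}=s_{P_n}(\alpha_n t)\quad\text{for all }t\ge0.$$
Plugging in $t=(t_n+cw_n)/\alpha_n$ gives
$$s_{Q_n}\left(\frac{t_n+cw_n}{\alpha_n}\right)=s_{P_n}(t_n+cw_n)\to\Phi(c),$$
which is the claim. This is an exact identity for each $n$, so no uniformity in $c$ is needed. The window $w_n/\alpha_n$ stays $o(t_n/\alpha_n)$, so cutoff also transfers if one wants to phrase it that way.

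The only step that needs care is the time-change identity $e^{-t\alpha(I-P)}=e^{-\alpha t(I-P)}$. It follows immediately from the power series, since scalar multiples of $I-P$ commute. Spectrally, this is the same as saying the eigenvalues $1-\alpha_n(1-p_j)$ of $Q_n$ give continuous-time factors $e^{-t\alpha_n(1-p_j)}$ with the same eigenfunctions. In the language of the continuous-time version of Theorem~\ref{dcomp}, the two chains are simultaneously diagonalizable and their spectral terms coincide exactly after the rescaling $t\mapsto\alpha_n t$, so the comparison error vanishes identically.

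So there is no real obstacle. We only need to make sure that the convention for "continuous-time chain generated by $P$" in Section~\ref{comp} is the rate-one Poissonization $e^{-t(I-P)}$. Under that convention the argument is complete.
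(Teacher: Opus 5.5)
Your proof is correct and is essentially the paper's argument. The paper notes that $Q_n$ has eigenvalues $1-\alpha_n(1-p_{n,j})$ on the same eigenvectors, so $e^{-\frac{t}{\alpha_n}(I-Q_n)}=e^{-t(I-P_n)}$ and the separation distances agree exactly at the paired times; you get the same semigroup identity directly from $I-Q_n=\alpha_n(I-P_n)$, and you also check that the stationary measure is unchanged, which the paper leaves implicit.
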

This gives concrete perturbative examples on product groups.
\begin{proposition}[Perturbed Walks on $(\mathbb Z/m\mathbb Z)^n$]\label{thm:intro-zm-perturbed}
Fix $m\geq2$, and let $P_n$ be the uniform coordinate refresh walk on $(\mathbb Z/m\mathbb Z)^n$. Fix $b\in(0,1)$, set $a_n:=\frac1n-\frac{b}{n-1}$, and let $Q_n$ be the coordinate refresh walk with rates $\alpha_1:=\frac1n+b$ and $\alpha_k:=a_n$ for $2\leq k\leq n$. With $\bar t_n(c):=a_n^{-1}(\ln(n-1)+c)$, for every fixed $c\in\mathbb R$,
$$\left|s_*^{Q_n}\left(\bar t_n(c)\right)-s_*^{P_n}\left(n(\ln n+c)\right)\right|\to0.$$
Consequently, $Q_n$ has separation limit profile $1-e^{-e^{-c}}$.
\end{proposition}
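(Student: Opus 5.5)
We compare the continuous-time walk $Q_n$ with an auxiliary product walk and then transfer the profile using Theorem~\ref{dcomp} and Proposition~\ref{thm:intro-lazy}.

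\textbf{Spectral description.} The coordinate refresh walk with rates $\alpha_k$ resamples coordinate $k$ uniformly at rate $\alpha_k$. Its generator is diagonalized by the characters $\chi_\xi(x)=e^{2\pi i\langle \xi,x\rangle/m}$, $\xi\in(\mathbb Z/m\mathbb Z)^n$. The eigenvalue of $\chi_\xi$ is $-\sum_{k\in\operatorname{supp}\xi}\alpha_k$. Since $P_n$ and $Q_n$ share this eigenbasis, the continuous-time version of Theorem~\ref{dcomp} applies. Both walks are random walks on an abelian group, hence transitive, and $P_n$ is perfectly transitive. The comparison therefore gives
$$\bigl|s_*^{Q_n}(t)-s_*^{R_n}(t)\bigr|\le\sum_{\xi}\bigl|e^{-t\lambda_\xi^{Q}}-e^{-t\lambda_\xi^{R}}\bigr|$$
for any auxiliary chain $R_n$ in the same family.

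\textbf{Choice of comparison chain.} A direct comparison of $Q_n$ with $P_n$ at matched times fails: the eigenvalues on characters supported on coordinate $1$ differ by order $b$. Instead I choose $R_n$ to be the walk with rates $\alpha_1=\infty$ (or very large) and $\alpha_k=a_n$ for $k\ge 2$. Equivalently, $R_n$ has coordinate $1$ already uniform and coordinates $2,\dots,n$ refreshed at common rate $a_n$.

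\textbf{Bounding the error.} Characters with $\xi_1=0$ have identical eigenvalues under $Q_n$ and $R_n$ and contribute $0$. Characters with $\xi_1\ne0$ contribute at most
$$e^{-t(1/n+b)}\sum_{\xi'}e^{-ta_n|\operatorname{supp}\xi'|}\cdot(m-1)=(m-1)e^{-t(1/n+b)}\bigl(1+(m-1)e^{-ta_n}\bigr)^{n-1}.$$
At $t=\bar t_n(c)$ we have $e^{-ta_n}=e^{-c}/(n-1)$, so the product factor stays bounded by $e^{(m-1)e^{-c}}$. Meanwhile $t b\to\infty$ because $a_n\sim(1-b)/n$. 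Hence the error tends to $0$.

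\textbf{Identifying the profile of $R_n$.} The separation distance of $R_n$ equals that of the uniform refresh walk on $(\mathbb Z/m\mathbb Z)^{n-1}$, since the first coordinate is exactly uniform and the ratio $P^t/\pi$ factorizes. Its rate is $a_n$ per coordinate instead of $1/(n-1)$. By Proposition~\ref{thm:intro-lazy}, or simply by the time-change $t\mapsto (n-1)a_nt$, this equals the separation distance of $P_{n-1}$ at time $(n-1)(\ln(n-1)+c)$.

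It remains to compare $s_*^{P_{n-1}}((n-1)(\ln(n-1)+c))$ with $s_*^{P_n}(n(\ln n+c))$. I would compute both explicitly. For the coordinate refresh walk, separation is attained at the pair $(x,y)$ with all coordinates differing. This gives
$$1-s=\bigl(1-e^{-t/n}\bigr)^n\to e^{-e^{-c}},$$
and the same limit holds with $n-1$ in place of $n$. So both sides converge to $1-e^{-e^{-c}}$, which yields both the stated difference limit and the profile.

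\textbf{Main obstacle.} The step needing care is justifying the comparison with the degenerate rate in $R_n$. I would use a large finite rate $M$ for coordinate $1$ and let $M\to\infty$; the bound above is uniform in $M$ once $M\ge b$. I also need the perfect-transitivity hypothesis for the comparison chain, which holds for refresh walks on abelian groups.
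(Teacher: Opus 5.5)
Your argument is correct, but it takes a different route from the paper, and your stated reason for the detour is mistaken. The paper compares $Q_n$ and $P_n$ directly. It applies Proposition~\ref{prop:transitive-continuity-comparison} with \emph{different} times: $\bar t_n(c)$ for $Q_n$ and $n(\ln n+c)$ for $P_n$.
\begin{itemize}
\item Characters with $\xi_1=0$ give exactly $\bigl(1+\frac{u}{n-1}\bigr)^{n-1}-\bigl(1+\frac un\bigr)^{n-1}\to0$, where $u=(m-1)e^{-c}$.
\item Characters with $\xi_1\neq0$ are handled by the triangle inequality, bounding each chain's contribution separately. For $Q_n$ this gives $(m-1)\bigl((n-1)e^c\bigr)^{-\rho_n}\bigl(1+\frac{u}{n-1}\bigr)^{n-1}$, and for $P_n$ it gives $\frac{m-1}{ne^c}\bigl(1+\frac un\bigr)^{n-1}$. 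Both vanish.
\end{itemize}
So the order-$b$ discrepancy in the eigenvalues on coordinate-$1$ characters is harmless, because those exponentials are small for both chains. No auxiliary chain is needed.

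Your own route is nevertheless valid. You compare $Q_n$ with $R_n$ at equal times, identify $R_n$ with a time-changed $P_{n-1}$, and evaluate both separation distances explicitly. The $\xi_1=0$ part of the error vanishes identically. The $\xi_1\neq0$ part is $O(e^{-\bar t_n(c)b})$ uniformly in $M\ge b$. The fact that the rates of $R_n$ do not sum to $1$ is absorbed by a time change.

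Note, however, that the formula $s_*(t)=1-\prod_k(1-e^{-\alpha_kt})$ you use at the end holds for every coordinate refresh walk, including $Q_n$ itself. Applied directly it gives
$s_*^{Q_n}(\bar t_n(c))=1-\bigl(1-e^{-(\frac1n+b)\bar t_n(c)}\bigr)\bigl(1-\frac{e^{-c}}{n-1}\bigr)^{n-1}\to1-e^{-e^{-c}}$
in one line. Once you invoke that formula, the comparison step in your proof does no real work. The paper's direct comparison instead transfers the profile of $P_n$ to $Q_n$ purely spectrally, without computing $s_*^{Q_n}$, which is the point of the example.
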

For shuffles, comparison handles small inhomogeneous perturbations. The definitions of the biased and centrally perturbed random-transposition chains used in the next two propositions are given in Sections \ref{subsec:biased-rt} and \ref{subsec:tiny-central-rt}.
\begin{proposition}[Biased Random Transpositions]\label{thm:intro-brt}
Let $P_{a,b}$ denote the biased random transpositions shuffle with $A$ of size $n-1$ and $B$ of size $1$, and set $\epsilon_n:=1/n!$. Put $t_n(c):=\frac n2(\ln n+c)$ and $\bar t_n(c):=\frac{n}{2(1-\epsilon_n)}(\ln n+c)$. Then, for every fixed $c\in\mathbb R$,
$$\left|s_*^{P_{1+\frac{\epsilon_n}{n-1},1-\epsilon_n}}\left(\bar t_n(c)\right)-s_*^{P_{1,1}}\left(t_n(c)\right)\right|\to0.$$
Consequently, any subsequential profile for one chain at these paired times is shared by the other.
\end{proposition}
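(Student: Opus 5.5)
The plan is to show that both chains in Proposition~\ref{thm:intro-brt} are diagonalized by one explicit basis, and then apply the continuous-time form of Theorem~\ref{dcomp} at the paired times. The argument depends on the definition of $P_{a,b}$ in Section~\ref{subsec:biased-rt}, which appears later in the paper. I assume it is the continuous-time walk in which a transposition $(i,j)$ inside $A$ rings at rate proportional to $a$, and a transposition meeting $B=\{n\}$ rings at rate proportional to $b$, normalized so that $P_{1,1}$ is ordinary random transpositions. If the actual definition differs, the algebra below has to be redone, but the structure of the argument should stay the same.

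\textbf{Step 1: simultaneous diagonalization.} Let $T=\sum_{i<j}(i\,j)$ be the class sum of transpositions, and let $X_n=\sum_{i<n}(i\,n)$ be the $n$-th Jucys--Murphy element. Then the generator of $P_{a,b}$ is an affine combination of $a(T-X_n)$ and $bX_n$, namely a multiple of $aT+(b-a)X_n$ minus a multiple of the identity. The element $T$ is central, and $X_n$ acts diagonally in the Gelfand--Tsetlin (Young seminormal) basis. Hence every $P_{a,b}$ is diagonal in the orthonormalized matrix-coefficient basis built from that basis, with the following eigenvalues.
\begin{itemize}
\item The eigenvalue indexed by a standard tableau of shape $\lambda$ is an affine function of $\operatorname{ct}(\lambda)/\binom n2$ and of $\operatorname{ct}(\text{box } n)$.
\item Here $\operatorname{ct}(\lambda)$ is the total content of $\lambda$, and $\operatorname{ct}(\text{box } n)$ is the content of the box containing $n$.
\end{itemize}
For $P_{1,1}$ the $X_n$ term cancels, and we recover the Diaconis--Shahshahani eigenvalues.

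\textbf{Step 2: comparison at rescaled time.} Let $Q$ be the chain $P_{1+\epsilon_n/(n-1),\,1-\epsilon_n}$ run at time $\bar t_n(c)=t_n(c)/(1-\epsilon_n)$. This is the same as running the generator of $Q$ multiplied by $1/(1-\epsilon_n)$ up to time $t_n(c)$. I would write out the resulting eigenvalue $\mu_{\lambda,\text{box}}$ and compare it with the random-transposition eigenvalue $\lambda_\lambda$. The parameters $a-1=\epsilon_n/(n-1)$ and $b-1=-\epsilon_n$ are chosen so that the total rate is preserved to first order. So I expect
\[
\Delta:=\mu-\lambda=O\!\left(\epsilon_n\cdot\frac{n-\operatorname{ct}(\text{box }n)}{n}\right)+O(\epsilon_n^2),
\]
and in particular $|\Delta|=O(\epsilon_n)$ uniformly. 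I would then bound the difference of separation distances using the first inequality of Theorem~\ref{dcomp}; the second inequality does not apply, because $Q$ is not transitive in the required sense. Group the basis by isotypic component, use $|e^{-tu}-e^{-tv}|\le t|u-v|e^{-t\min(u,v)}$, and apply Cauchy--Schwarz within each component. Since $\sum_{j\in\lambda}f_j(x)^2=d_\lambda^2$ in the uniform normalization, this gives
\[
\left|s_*^{Q}(\bar t_n(c))-s_*^{P_{1,1}}(t_n(c))\right|\le t_n(c)\sum_{\lambda\neq(n)} d_\lambda^2\,|\Delta_\lambda|_{\max}\, e^{-t_n(c)\min(\lambda_\lambda,\mu_\lambda)}.
\]

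\textbf{Step 3: the main obstacle.} The difficulty is that the crude bound $\sum_\lambda d_\lambda^2=n!$ together with $|\Delta|=O(\epsilon_n)$ only gives an error of order $n\log n$. Even with $\epsilon_n=1/n!$, this does not tend to zero. The heat factor $e^{-t\min}$ must therefore do real work.
\begin{itemize}
\item For shapes $\lambda$ far from $(n)$, the factor $e^{-t_n(c)(1-r_\lambda)}$ is tiny. For example, when $\lambda_1\le n/2$ it is at most $e^{-c' n\log n}$.
\item For shapes near $(n)$, where $\lambda_1=n-k$ with $k$ small, we have $d_\lambda\le n^k$, so the weight $d_\lambda^2$ is small.
\end{itemize}
The estimate needed is therefore a Diaconis--Shahshahani type bound on $\sum_\lambda d_\lambda^2 e^{-t(1-r_\lambda)}$ at time $t_n(c)$, computed at the full time rather than at the $\ell^2$ time. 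That sum is only polynomially large, so the factor $\epsilon_n=1/n!$ makes the right-hand side $o(1)$.

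If this estimate works, the stated convergence follows. The final statement about subsequential profiles is then immediate: the two separation distances differ by $o(1)$ along the paired times $t_n(c)$ and $\bar t_n(c)$.
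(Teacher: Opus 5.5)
Your overall architecture matches the paper's. It has three parts: a common eigenbasis, an $O(\epsilon_n)$ perturbation of the eigenvalues, and a comparison bound of the form $Cn(\ln n+c)\,\epsilon_n\sum_{\lambda\neq(n)}f_\lambda^2e^{-t_n(c)(1-p_\lambda)}$.

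Your Jucys--Murphy/Gelfand--Tsetlin diagonalization is a self-contained substitute for the paper's commutation lemma plus the Nestoridi--Yan eigenvalues. In the actual kernel, two labels are drawn independently from $\mu_{a,b}$, so the rates are $a^2$ inside $A$ and $ab$ across. This gives $P_{a,b}=\frac{(n-1)a^2+b^2}{n^2}I+\frac{2a^2}{n^2}T+\frac{2a(b-a)}{n^2}X_n$. Your diagonalization therefore goes through unchanged and reproduces those eigenvalues with multiplicity $f_\lambda f_\mu$.

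One correction: $P_{a,b}$ is a random walk on $S_n$, hence perfectly transitive by Proposition \ref{RWs-on-groups-are-pt}. The paper does apply Proposition \ref{prop:transitive-continuity-comparison}, together with the branching identity $\sum_{\mu\nearrow\lambda}f_\mu=f_\lambda$. Your alternative, the first inequality plus Cauchy--Schwarz inside each isotypic block, is nonetheless valid and gives the same right-hand side, so that misstatement is harmless.

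The real gap is Step 3. It is the substance of the proof, and you leave it conditional on a false claim. The sum $\sum_{\lambda\neq(n)}f_\lambda^2e^{-t_n(c)(1-p_\lambda)}$ is not polynomially large. The hooks $(n-k,1^k)$ alone have $f_\lambda=\binom{n-1}{k}$ and $e^{-t_n(c)(1-p_\lambda)}=(e^cn)^{-k}$. They contribute roughly $\sum_k(ne^{-c})^k/(k!)^2$, which is of order $e^{2\sqrt{ne^{-c}}}$ up to polynomial factors.

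Your near-$(n)$ bullet runs into the same problem. Combining $d_\lambda\le n^k$ with a heat factor of roughly $(e^cn)^{-k}$ only gives terms of size $n^ke^{-ck}$, which grow in $k$. If you drop the heat factor instead, $p(k)n^{2k}$ already exceeds $n!$ near $k=n/2$, so the placement of your threshold matters.

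What you need, and what is true, is only that the sum is $o(n!/(n\ln n))$; this is Lemma \ref{lem:rt-spectral-weight}. The paper proves it by splitting at $\lambda_1=0.7n$.
\begin{itemize}
\item For $\lambda_1\le0.7n$, a dominance argument gives $n-1-\frac2n\diag(\lambda)\ge\eta n$. So the heat factor is at most $(e^cn)^{-\eta n/2}$, and $\sum_\lambda f_\lambda^2=n!$ finishes this range.
\item For $\lambda_1=n-j>0.7n$, the heat factor is discarded entirely. The bounds $f_\lambda^2\le n^{2j}/j!$ and $p(j)\le e^{2.57\sqrt j}$ bound this part by $0.3n\,e^{2.57\sqrt{0.3n}}\,n^{0.6n}/(0.3n)!$, which is $o(n!/(n\ln n))$ by Stirling.
\end{itemize}
So near $(n)$, what does the work is the factor $\epsilon_n=1/n!$ set against the small Plancherel mass of long-first-row shapes, not the heat kernel.
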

A tiny central perturbation gives a compact version of the same phenomenon.
\begin{proposition}[Tiny Central Perturbations of Random Transpositions]\label{thm:intro-tiny-central-rt}
Let $P_n$ be the random transpositions shuffle on $S_n$, let $C_n$ be any symmetric central random walk on $S_n$, and set $Q_n:=(1-\epsilon_n)P_n+\epsilon_n C_n$ with $\epsilon_n=O((n!)^{-1})$. Put $t_n(c):=\frac n2(\ln n+c)$. Then, for every fixed $c\in\mathbb R$,
$$\left|s_*^{Q_n}\left(t_n(c)\right)-s_*^{P_n}\left(t_n(c)\right)\right|\to0.$$
Consequently, $Q_n$ has separation limit profile $1-e^{-e^{-c}}$ at this scale.
\end{proposition}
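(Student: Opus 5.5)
We will apply the transitive form of Theorem~\ref{dcomp}, with $P=P_n$ and $Q=Q_n$ both random walks on $S_n$. Since $C_n$ is a central symmetric walk, its transition matrix is a class function convolution. By Schur's lemma it acts on each isotypic block $\rho\otimes\rho^*$ of $L^2(S_n)$ as a scalar $\chi$-ratio $c_\rho\in[-1,1]$. The same holds for $P_n$, with eigenvalue $r_\rho=\frac1n+\frac{n-1}{n}\frac{\chi_\rho(\tau)}{d_\rho}$. Hence $P_n$ and $Q_n$ are simultaneously diagonalized by the matrix-coefficient basis, and $Q_n$ has eigenvalue
$$q_\rho=(1-\epsilon_n)r_\rho+\epsilon_n c_\rho$$
with multiplicity $d_\rho^2$. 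Both chains are symmetric, hence reversible with respect to uniform measure, and transitive. $P_n$ is perfectly transitive, being a walk on a group driven by a class function. Theorem~\ref{dcomp} then gives
$$\bigl|s_*^{Q_n}(t)-s_*^{P_n}(t)\bigr|\le\sum_{\rho}d_\rho^2\,|q_\rho^t-r_\rho^t|.$$
If the paper works in continuous time, as the times $t_n(c)$ being non-integer suggest, we would instead use the continuous-time version from Section~\ref{comp}, with $e^{-t(1-q_\rho)}$ in place of $q_\rho^t$.

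The key estimate is the bound on each term. We have $|q_\rho-r_\rho|=\epsilon_n|c_\rho-r_\rho|\le2\epsilon_n$, and all eigenvalues lie in $[-1,1]$. The mean value inequality therefore gives $|q_\rho^t-r_\rho^t|\le t\cdot2\epsilon_n$ in discrete time. In continuous time the analogous bound is $|e^{-t(1-q_\rho)}-e^{-t(1-r_\rho)}|\le t\,|q_\rho-r_\rho|\le 2t\epsilon_n$. Summing over $\rho$ and using $\sum_\rho d_\rho^2=n!$ yields
$$\bigl|s_*^{Q_n}(t_n(c))-s_*^{P_n}(t_n(c))\bigr|\le 2\,t_n(c)\,\epsilon_n\,n! = O\bigl(n\log n\bigr)\cdot O(1).$$

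This crude bound does not vanish, and refining it is the main obstacle. The trivial representation contributes zero, since $q=r=1$. The sign representation and other terms with eigenvalues near $\pm1$ need care. For the remaining terms we will exploit decay: for $\rho\neq$ trivial, $|r_\rho|\le1-\frac{2}{n}$ roughly, except possibly sign-type cases with $r_\rho$ near $-1$. In continuous time, $1-r_\rho\ge 2/n$ and $1-q_\rho\ge(1-\epsilon_n)(1-r_\rho)$. The mean value inequality, applied on the segment between the two rates, therefore gives
$$|e^{-t(1-q_\rho)}-e^{-t(1-r_\rho)}|\le t\,\epsilon_n|c_\rho-r_\rho|\,e^{-t(1-\epsilon_n)(1-r_\rho)}.$$
Then
$$\sum_\rho d_\rho^2|\cdot|\le 2t\epsilon_n\sum_{\rho\ne\mathrm{triv}}d_\rho^2e^{-t(1-\epsilon_n)(1-r_\rho)},$$
and the sum is the $L^2$ quantity bounded in the Diaconis--Shahshahani analysis. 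At $t=t_n(c)$ it is $O(e^{-2c})$ plus at most $n!$ times something small; more crudely each factor is at most $e^{-2t/n}\approx e^{-c}/n$. This gives the bound $2t\epsilon_n\, n!\,e^{-c}/n=O(\log n)\cdot\epsilon_n n!$, which still requires $\epsilon_n n!\log n\to0$.

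If that falls short, we would use the Diaconis--Shahshahani bound $\sum_{\rho\ne\mathrm{triv}}d_\rho^2e^{-2t(1-r_\rho)}=O(e^{-2c})$ via Cauchy--Schwarz. This gives $\sum d_\rho^2e^{-t(1-r_\rho)}\le\sqrt{n!}\cdot O(e^{-c})$, and the total is then $O(n\log n\,\epsilon_n\sqrt{n!})\to0$ since $\epsilon_n=O(1/n!)$. This Cauchy--Schwarz step is the one we will rely on. In discrete time we would handle the possible near $-1$ eigenvalues the same way, through $|r_\rho|^{t}$ and the squared sum.

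The profile statement $1-e^{-e^{-c}}$ for $Q_n$ then follows from Theorem~\ref{thm:intro-rt-profile}, transferred via continuity of the profile in $c$ if the time scales differ.
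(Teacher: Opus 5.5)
Your route matches the paper's in outline: simultaneous diagonalization, the transitive comparison bound of Proposition~\ref{prop:transitive-continuity-comparison} in continuous time, and a mean value estimate. The gap is in the one estimate you say you will rely on. After the mean value step you need $t\epsilon_n\sum_{\lambda\ne(n)}d_\lambda^2e^{-t(1-r_\lambda)}\to0$, with irreducibles indexed by $\lambda\vdash n$. You propose to get this from Cauchy--Schwarz plus the claim $S:=\sum_{\lambda\ne(n)}d_\lambda^2e^{-2t(1-r_\lambda)}=O(e^{-2c})$ at $t=t_n(c)$. Diaconis--Shahshahani bound the discrete-time sum $\sum d_\lambda^2r_\lambda^{2k}$ past the cutoff, and that estimate does not transfer to continuous time. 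In continuous time your claim is false for $c<-1$. Indeed $1+S=n!\,e^{-2t(I-P_n)}(\mathrm{id},\mathrm{id})$, and $e^{-2t(I-P_n)}=\sum_k e^{-2t}\frac{(2t)^k}{k!}P_n^k$ dominates $e^{-2t}I$ entrywise, since the clock may never ring. At $2t=n(\ln n+c)$ this gives $S\ge n!\,n^{-n}e^{-cn}-1$, which is of order $\sqrt{n}\,e^{-(1+c)n}$ and exponentially large for $c<-1$. The proposition is claimed for every fixed $c\in\mathbb R$, so the argument as written has a hole there.

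The hole can be filled, because you only need $S=o\bigl(n!/(n\log n)^2\bigr)$, and this holds for every fixed $c$. Proving it, however, is exactly the combinatorial work in the paper's Lemma~\ref{lem:rt-spectral-weight}, and citing Diaconis--Shahshahani does not replace it. Split at $\lambda_1=0.7n$:
\begin{itemize}
\item Partitions with $\lambda_1\le0.7n$ satisfy $n-1-\frac2n\diag(\lambda)\ge\eta n$, so their weights are at most $(e^cn)^{-\eta n}$ against total Plancherel mass $n!$.
\item Partitions with $\lambda_1>0.7n$ carry total mass at most $\sum_{j\le0.3n}p(j)n^{2j}/j!$, which is negligible against $n!$.
\end{itemize}
Once you have this split, apply it directly to $\sum_\lambda d_\lambda^2e^{-t(1-r_\lambda)}$; that is the paper's proof. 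The Cauchy--Schwarz detour then gains nothing uniformly in $c$. It helps only for $c$ in a range where a genuine continuous-time bound $S=O_c(1)$ is available. There your error is $O(t\epsilon_n\sqrt{n!})$, so the comparison would tolerate perturbations as large as $\epsilon_n=o\bigl((n!)^{-1/2}(n\log n)^{-1}\bigr)$. The paper's estimate, by contrast, is self-contained and works for all $c$.
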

Finally, the same spectral comparison idea gives a continuity criterion for separation limit profiles. This parallels the total variation continuity results of \cite{NestoridiContinuity}, but the proof must account for the terminal-state maximum in separation distance. We apply the criterion to the hypercube examples, to perturbed walks on $(\mathbb Z/m\mathbb Z)^n$, and to the Bernoulli--Laplace diffusion model.
\begin{theorem}[Continuity Criterion]\label{thm:intro-continuity-general}
Let $P_n$ be reversible Markov chains on finite state spaces $\Omega_n$, with continuous-time kernels $e^{-t(I-P_n)}$, stationary measures $\pi_n$, eigenvalue gaps $\lambda_{n,j}$, and orthonormal eigenbases $\{f_{n,j}\}$ in $\ell^2(\pi_n)$. Fix starting states $x_n$, and assume that $\Phi_x(c):=\lim_{n\to\infty}s_{n,x_n}(t_n+cw_n)$ exists for all $c\in\mathbb R$. If, for every $c\in\mathbb R$,
$$\limsup_{n\to\infty}w_n\max_{y\in\Omega_n}\sum_{j=2}^{|\Omega_n|}|f_{n,j}(x_n)f_{n,j}(y)|\lambda_{n,j}e^{-(t_n+cw_n)\lambda_{n,j}}\leq g_x(c)$$
for a locally bounded function $g_x$, then $\Phi_x$ is continuous.
\end{theorem}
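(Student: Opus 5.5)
We argue via a uniform Lipschitz bound on $c\mapsto s_{n,x_n}(t_n+cw_n)$ over compact sets of $c$, which then passes to the limit $\Phi_x$.

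\textbf{Spectral representation.} Write the continuous-time kernel spectrally:
$$\frac{H_t(x,y)}{\pi_n(y)}=\sum_{j=1}^{|\Omega_n|}f_{n,j}(x)f_{n,j}(y)e^{-t\lambda_{n,j}},$$
with $f_{n,1}\equiv1$ and $\lambda_{n,1}=0$. Set $u_y(t):=1-H_t(x_n,y)/\pi_n(y)$, so that $s_{n,x_n}(t)=\max_y u_y(t)$. Each $u_y$ is smooth in $t$, with
$$\Big|\tfrac{d}{dt}u_y(t)\Big|\le\sum_{j\ge2}|f_{n,j}(x_n)f_{n,j}(y)|\,\lambda_{n,j}e^{-t\lambda_{n,j}}.$$

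\textbf{Handling the terminal-state maximum.} This is the point where separation differs from total variation. A maximum of finitely many functions is controlled by the largest Lipschitz constant: for $c_1<c_2$ we have
$$|s(t_1)-s(t_2)|\le\max_y|u_y(t_1)-u_y(t_2)|,$$
since $|\max_y a_y-\max_y b_y|\le\max_y|a_y-b_y|$. Writing $u_y(t_1)-u_y(t_2)=\int_{t_1}^{t_2}u_y'$ and exchanging the maximum over $y$ with the integral gives, with $t_i=t_n+c_iw_n$ and the substitution $t=t_n+cw_n$,
$$|s_{n,x_n}(t_1)-s_{n,x_n}(t_2)|\le\int_{c_1}^{c_2} w_n\max_y\sum_{j\ge2}|f_{n,j}(x_n)f_{n,j}(y)|\lambda_{n,j}e^{-(t_n+cw_n)\lambda_{n,j}}\,dc=:\int_{c_1}^{c_2}G_n(c)\,dc.$$

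\textbf{Passing to the limit.} This step needs care, because the hypothesis only gives $\limsup_n G_n(c)\le g_x(c)$ pointwise, not uniformly. We use monotonicity in $c$: each summand $\lambda e^{-(t_n+cw_n)\lambda}$ is nonincreasing in $c$ (as $w_n>0$ and $\lambda\ge0$), and so $G_n$ is nonincreasing in $c$. Hence on $[c_1,c_2]$ we have $G_n(c)\le G_n(c_1)$, and
$$|s_{n,x_n}(t_n+c_1w_n)-s_{n,x_n}(t_n+c_2w_n)|\le (c_2-c_1)G_n(c_1).$$
Taking $n\to\infty$ yields $|\Phi_x(c_1)-\Phi_x(c_2)|\le (c_2-c_1)g_x(c_1)$. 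This gives right-continuity at $c_1$ immediately. For left-continuity at a point $c$, take $c_1\in[c-1,c)$; local boundedness gives $g_x(c_1)\le M$ on that interval, so $|\Phi_x(c_1)-\Phi_x(c)|\le M(c-c_1)\to0$. Thus $\Phi_x$ is continuous, indeed locally Lipschitz.

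Without the monotonicity of $G_n$, one would need Fatou-type uniformity to pass the limit inside the integral; the monotonicity is what makes the pointwise hypothesis sufficient.
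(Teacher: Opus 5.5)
Your proof is correct and follows essentially the same route as the paper. Both arguments reduce $|s_{n,x_n}(t_n+c_1w_n)-s_{n,x_n}(t_n+c_2w_n)|$ to a maximum over $y$ of the spectral difference and bound it by $(c_2-c_1)$ times the hypothesis quantity at $c_1$: the paper uses its comparison theorem and a per-eigenvalue Mean Value Theorem with $d_{n,j}\ge c_1$, you use the fundamental theorem of calculus and the monotonicity of $G_n$ (and your left-continuity step via local boundedness on $[c-1,c)$ is more explicit than the paper's remark about interchanging $c_1$ and $c_2$).
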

For transitive chains, the spectral hypothesis takes a simpler form.
\begin{theorem}[Transitive Continuity Criterion]\label{thm:intro-continuity-transitive}
Let $P_n$ be reversible, transitive, and perfectly transitive Markov chains on $\Omega_n$, with uniform stationary distributions. Assume that the common separation limit profile $\Phi(c):=\lim_{n\to\infty}s_{n,*}(t_n+cw_n)$ exists for all $c\in\mathbb R$. If, for every $c\in\mathbb R$,
$$\limsup_{n\to\infty}w_n\sum_{j=2}^{|\Omega_n|}\lambda_{n,j}e^{-(t_n+cw_n)\lambda_{n,j}}\leq g(c)$$
for a locally bounded function $g$, then $\Phi$ is continuous.
\end{theorem}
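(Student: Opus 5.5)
This is the transitive specialization of Theorem \ref{thm:intro-continuity-general}, and I would prove it by reducing to that criterion, or equivalently by rerunning its argument directly. Fix $c$ and $h$ with $|h|\le 1$. Put $T=t_n+cw_n$ and $T'=t_n+(c+h)w_n$. The goal is to show $|s_{n,*}(T')-s_{n,*}(T)|\le |h|\,(g(c)+g(c+h)+o(1))$ in the limsup. Local boundedness of $g$ then gives $|\Phi(c+h)-\Phi(c)|\le 2|h|\sup_{[c-1,c+1]}g\to0$.

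\textbf{Reduction to a spectral sum.} The continuous-time chain at times $T$ and $T'$ is simultaneously diagonalizable with itself: the eigenvalues are $e^{-T\lambda_{n,j}}$ and $e^{-T'\lambda_{n,j}}$ with the same eigenfunctions. So the continuous-time analogue of Theorem \ref{dcomp} (the version proved in Section \ref{comp}, under the transitive and perfectly transitive hypotheses) should give
$$|s_{n,*}(T')-s_{n,*}(T)|\le\sum_{j\ge2}\bigl|e^{-T'\lambda_{n,j}}-e^{-T\lambda_{n,j}}\bigr|.$$
If only the pointwise first bound of Theorem \ref{dcomp} is available, I would use it instead. Perfect transitivity should give $\sum_j|f_j(x)f_j(y)|\le\sum_j 1$ uniformly, or more precisely that for each eigenspace the sum over its orthonormal basis of $|f_j(x)f_j(y)|$ is bounded by its dimension, by Cauchy--Schwarz and $\sum_{j\in E}f_j(x)^2=\dim E$ under transitivity. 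The $j=1$ term vanishes because $\lambda_{n,1}=0$.

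\textbf{Mean value step.} For each $j$, $|e^{-T'\lambda}-e^{-T\lambda}|\le |h|w_n\lambda e^{-\min(T,T')\lambda}$. Taking the minimum time to be $t_n+\min(c,c+h)w_n$ gives
$$|s_{n,*}(T')-s_{n,*}(T)|\le |h|\,w_n\sum_{j\ge2}\lambda_{n,j}e^{-(t_n+\min(c,c+h)w_n)\lambda_{n,j}}.$$
Taking the limsup and applying the hypothesis at $\min(c,c+h)$ gives $|\Phi(c+h)-\Phi(c)|\le|h|\,g(\min(c,c+h))\le |h|\sup_{[c-1,c]}g$, which tends to $0$ as $h\to0$. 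This proves continuity.

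\textbf{Main obstacle.} The key step is the first one: confirming that the comparison bound holds for $s_*$ in continuous time with exactly the sum $\sum_j|p_j-q_j|$ and no extra multiplicity factors. Separation involves a maximum over terminal states $y$, and the sign structure of $1-P^t(x,y)/\pi(y)$ matters. I would handle this using $|\max_y A(y)-\max_y B(y)|\le\max_y|A(y)-B(y)|$ together with the eigen-expansion $P^t(x,y)/\pi(y)=\sum_j e^{-t\lambda_j}f_j(x)f_j(y)$, and then bound $|f_j(x)f_j(y)|$ via transitivity as above.
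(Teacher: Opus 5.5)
Your main line is the paper's proof: apply Proposition \ref{prop:transitive-continuity-comparison} with $P=Q=P_n$ at the two times $t_n+cw_n$ and $t_n+(c+h)w_n$, bound each spectral difference by the mean value theorem at the smaller time, pass to the $\limsup$, and conclude from local boundedness of $g$. Your fallback via Cauchy--Schwarz and $\sum_{j\in E}f_j(x)^2=\dim E$ is valid when a group of automorphisms acts transitively, e.g.\ for random walks on groups, and there it avoids the bijective-maximizer lemma; but that identity does not follow from the paper's row-permutation definition of transitivity alone.
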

The same continuity method applies beyond product groups. The definitions and normalization for the Bernoulli--Laplace diffusion model used below are recalled in Section \ref{subsec:bernoulli-laplace}.
\begin{proposition}[Bernoulli--Laplace Continuity]\label{thm:intro-bl-continuity}
Let $\Omega_n:=\{0,1,\ldots,n/2\}$, and let $P_n$ be the Bernoulli--Laplace diffusion model on $\Omega_n$, where the state records the number of red balls in the left urn. If, along a subsequence, $s_{n,n/2}(\frac{n}{4}(\ln n+c))$ converges pointwise for every $c\in\mathbb R$ to a profile $\Phi(c)$, then $\Phi$ is continuous.
\end{proposition}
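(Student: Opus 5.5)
We plan to derive this from the general Continuity Criterion, Theorem \ref{thm:intro-continuity-general}, applied with starting states $x_n=n/2$, $t_n=\frac n4\ln n$ and $w_n=\frac n4$. We pass to the subsequence along which the profile exists and use the continuous-time kernel with the normalization of Section \ref{subsec:bernoulli-laplace}, so that $s_{n,n/2}$ is the separation distance from the extreme state. The Bernoulli--Laplace chain is a birth--death chain whose spectral data is explicit. Its eigenfunctions are the dual Hahn polynomials $f_{n,j}$, $0\le j\le n/2$, orthonormal in $\ell^2(\pi_n)$ with $\pi_n$ hypergeometric. Its eigenvalue gaps are $\lambda_{n,j}=\frac{4j(n-j+1)}{n^2}$ (up to the chosen normalization), so that $\lambda_{n,1}\approx 4/n$ and $t_n\lambda_{n,j}\approx j\ln n$ for small $j$.

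The key input is a bound on $|f_{n,j}(n/2)f_{n,j}(y)|$ uniformly in $y$. Since $n/2$ is an endpoint of the path, the quantity $f_{n,j}(n/2)^2$ equals the stationary multiplicity $\binom{n/2}{j}^2\frac{n-2j+1}{n-j+1}$, up to the standard Hahn normalization. This is at most $\binom{n/2}{j}^2\le (n/2)^{2j}/(j!)^2$. For the other factor we use Cauchy--Schwarz in the orthonormal basis: $\sum_j f_{n,j}(y)^2=1/\pi_n(y)$. We do not want the factor $1/\pi_n(y)$, which is exponentially large. We therefore use instead the classical fact that for birth--death chains started at an endpoint, separation is attained at the opposite endpoint $y=0$. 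By the monotone-likelihood-ratio property, the ratio $P^t(n/2,y)/\pi(y)$ is minimized at $y=0$. We will check that the criterion's maximum over $y$ may be replaced by $y=0$ in this monotone setting, where $|f_{n,j}(0)|=|f_{n,j}(n/2)|$ by the symmetry $y\mapsto n/2-y$, up to sign. If that replacement is not allowed verbatim, we will rerun the proof of Theorem \ref{thm:intro-continuity-general} with the fixed target $y=0$, since its derivative estimate only uses the target actually achieving the separation.

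With this reduction, the sum to bound is
$$w_n\sum_{j\ge1}\binom{n/2}{j}^2\frac{n-2j+1}{n-j+1}\,\lambda_{n,j}\,e^{-(t_n+cw_n)\lambda_{n,j}}.$$
We split it into small and large $j$. For $j\le \sqrt n$, we have $\lambda_{n,j}=\frac{4j}{n}(1+O(j/n))$. Hence $e^{-(t_n+cw_n)\lambda_{n,j}}\le C\,n^{-j}e^{-cj}$ and $w_n\lambda_{n,j}\le 2j$, so the $j$-th term is at most $C\,\frac{(n/2)^{2j}}{(j!)^2}\,n^{-j}e^{-cj}\cdot 2j$. This still carries a factor $n^{j}$.

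The growth comes from our crude normalization. The correct normalization is $\sum_y \pi(y)f_j(y)^2=1$ with $\pi_n(0)=\binom{n/2}{n/2}^{2}/\binom{n}{n/2}$. The relevant product $f_{n,j}(n/2)f_{n,j}(0)$ has size $\binom{n/2}{j}^2/\binom{n}{j}\cdot\frac{n-2j+1}{n-j+1}\approx \frac{n^j}{4^j j!}$ rather than $n^{2j}$. We will verify this carefully; it is the main obstacle, namely pinning down the exact endpoint values of the dual Hahn eigenfunctions with the right normalization. With it, the small-$j$ terms are bounded by $C\,\frac{2j}{j!}(e^{-c}/4)^{j}$, which is summable and locally bounded in $c$.

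For the large $j$, we will use $\lambda_{n,j}\ge 2j/n$, which holds since $j\le n/2$. This gives $e^{-(t_n+cw_n)\lambda_{n,j}}\le n^{-j/2}e^{-|c|j}$. Together with the crude bound $\binom{n/2}{j}^2/\binom{n}{j}\le 2^{-j}\binom{n/2}{j}$ and $\binom{n/2}{j}\le (en/2j)^j$, the terms decay superexponentially once $j\ge\sqrt n$, since $(en/2j)^j n^{-j/2}\le (e/2)^{j}$. The remaining tail still needs some care, and we may refine the split point to $j\ge K$ fixed. Summing the two ranges gives a finite $g(c)$ that is locally bounded in $c$. Theorem \ref{thm:intro-continuity-general} then yields the continuity of $\Phi$.
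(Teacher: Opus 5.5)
Your framework matches the paper's: you apply Theorem \ref{thm:intro-continuity-general} from $x_n=n/2$ and reduce the hypothesis to a weighted sum over $j$. Your reduction to the single target $y=0$ is a valid substitute for the paper's input, which is that the spherical functions $\phi_{n,j}=f_{n,j}/\sqrt{d_{n,j}}$ are bounded by $1$. That reduction uses stochastic monotonicity of the continuous-time birth--death chain, together with the reflection $y\mapsto n/2-y$, which gives $|f_{n,j}(0)|=|f_{n,j}(n/2)|$.

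Either way, everything hinges on $f_{n,j}(n/2)^2$, and both values you propose are wrong. The correct value is $f_{n,j}(n/2)^2=d_{n,j}=\binom nj-\binom n{j-1}=\binom nj\frac{n-2j+1}{n-j+1}\approx n^j/j!$.
\begin{itemize}
\item Your formulas already fail at $j=1$. There $f_{n,1}(x)\propto x-n/4$, which gives $f_{n,1}(n/2)^2=n-1$ rather than $n(n-1)/4$ or $(n-1)/4$.
\item Your second formula is incompatible with $\sum_j f_{n,j}(n/2)^2=1/\pi_n(n/2)=\binom n{n/2}$, since its total is at most $\sum_j\binom{n/2}{j}2^{-j}=(3/2)^{n/2}$.
\end{itemize}
The value $d_{n,j}$ follows, for example, by lifting to the walk on $n/2$-subsets of $[n]$, whose $j$th eigenspace has dimension $\binom nj-\binom n{j-1}$. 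Then match $P^t(n/2,n/2)/\pi_n(n/2)=\sum_j f_{n,j}(n/2)^2e^{-t\lambda_{n,j}}$ with the trace formula; the $\lambda_{n,j}$ are distinct.

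For small $j$ the spurious factor $4^{-j}$ is harmless. Your large-$j$ estimate, however, relies on the spurious $2^{-j}$ and does not survive the correction.
\begin{itemize}
\item Take $d_{n,j}\le(en/j)^j$, $w_n\lambda_{n,j}\le j$, and the crude bound $\lambda_{n,j}\ge 2j/n$. The resulting bound is $j\bigl(e^{1+|c|}\sqrt n/j\bigr)^j$. This is at least $j$ for every $j<e^{1+|c|}\sqrt n$, so summing over $\sqrt n\le j<e^{1+|c|}\sqrt n$ gives something unbounded in $n$.
\item The $c$-factor must be $e^{|c|j}$, not $e^{-|c|j}$ as you wrote, because $\frac n4\lambda_{n,j}\in[j/2,j]$.
\item Your bound $e^{-(t_n+cw_n)\lambda_{n,j}}\le Cn^{-j}e^{-cj}$ is not uniform over $j\le\sqrt n$. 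Since $\frac n4\lambda_{n,j}=j-\frac{j(j-1)}n$, the exact factor is $n^{-j}e^{-cj}(ne^c)^{j(j-1)/n}$, and the correction is of order $n$ at $j=\sqrt n$.
\end{itemize}

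The missing step is to keep this exact exponent throughout. Each term is then at most $je^{|c|j}n^{j(j-1)/n}/j!$.
\begin{itemize}
\item For $j\le n^{1/3}$, this is dominated by a summable series.
\item For $n^{1/3}<j\le n/2$, it is at most $j\bigl(e^{|c|+1}n^{(j-1)/n}/j\bigr)^j$, with $n^{(j-1)/n}/j=O(n^{-1/3})$.
\end{itemize}
This is precisely the paper's estimate, and with it your argument closes.
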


\section{Optimal Strong Stationary Times and Limit Profiles}
In this section, we study limit profiles using optimal strong stationary times. Recall that if $(X_t)_{t\geq0}$ has stationary distribution $\pi$, then a stopping time $T$ is a strong stationary time if
$$\P_x[X_T=y,\ T=t]=\pi(y)\P_x[T=t]\qquad\text{for all }y\text{ and }t\geq0.$$
Aldous and Diaconis \cite{AldousDiaconisShuffling} showed that any strong stationary time gives the separation bound
$$s_x(t)\leq \P_x[T>t].$$
They also proved that optimal strong stationary times exist, meaning that equality holds. In the examples below, explicit optimal times let us compute separation profiles by computing the tail of $T$.

\subsection{Riffle Shuffles Driven by a Pile-Size Distribution}

We consider the following variation of inverse riffle shuffles. Let $f:=f_n$ be a probability distribution on $[n]:=\{1,\dots,n\}$. At each time $t$, choose $K_t\in[n]$ with
$$\P[K_t=k]=f(k),$$
then choose $K_t$ cards uniformly at random without replacement and move them to the top, preserving their relative order. For each card $i\in[n]$ and each time $s\ge 1$, let
$$\xi_i(s):= \begin{cases} 1, & \text{if card $i$ is selected at time $s$},\\ 0, & \text{otherwise,} \end{cases}$$
and let
$$R_i(t):=(\xi_i(1),\dots,\xi_i(t))\in\{0,1\}^t.$$
The stopping time used below is
$$T:=\inf\{t\ge 1:\ R_1(t),\dots,R_n(t)\text{ are all distinct}\}.$$

For a partition $\lambda\vdash n$, written as $\lambda:=1^{m_1}2^{m_2}\cdots$, define
$$q_\lambda^{(f)}:=\sum_{\substack{0\le r_j\le m_j\\ \text{not all }r_j=0}} \frac{\prod_{j\ge 1}\binom{m_j}{r_j}} {\binom{n}{\sum_{j\ge 1}j r_j}} \,f\!\left(\sum_{j\ge 1}j r_j\right).$$

With this notation, $T$ is an optimal strong stationary time for the chain, and hence 
$$s(t)=\P[T>t]$$
by \cite{AldousDiaconisShuffling}.

\begin{proposition}
Moreover, the separation distance has the inclusion-exclusion formula
$$s(t) = 1- \sum_{\lambda\vdash n} n!\prod_{j\ge 1}\frac{(-1)^{(j-1)m_j}}{j^{m_j}m_j!}\, \bigl(q_\lambda^{(f)}\bigr)^t,$$
where $\lambda$ is written as above.
\end{proposition}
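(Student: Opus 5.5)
Since $T$ is an optimal strong stationary time, $s(t)=\P[T>t]=1-\P[T\le t]$. It therefore suffices to show that $\P[T\le t]$, the probability that the histories $R_1(t),\dots,R_n(t)$ are pairwise distinct, equals the displayed alternating sum. The plan is Möbius inversion on the lattice of set partitions of $[n]$.

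\textbf{Step 1: block-constant histories.} For a set partition $\sigma$ of $[n]$, let $A_\sigma$ be the event that $R_i(t)=R_{i'}(t)$ whenever $i,i'$ lie in the same block of $\sigma$. The steps are i.i.d., so $\P[A_\sigma]=p_\sigma^t$, where $p_\sigma$ is the probability that a single step selects a set which is a union of blocks of $\sigma$. This set may be empty, since $f$ is supported on $[n]$ and so $K\ge 1$. To compute $p_\sigma$, let $\sigma$ have type $\lambda=1^{m_1}2^{m_2}\cdots$ and choose $r_j$ of the $m_j$ blocks of size $j$. This gives a set of size $k=\sum_j jr_j$. Conditioned on $K=k$, the selected set is uniform among $\binom nk$ sets. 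Summing over all choices gives
$$p_\sigma=\sum_{r}\frac{\prod_j\binom{m_j}{r_j}}{\binom{n}{k}}f(k),$$
where the term $k=0$ vanishes because $f(0)=0$. This matches $q_\lambda^{(f)}$, and the restriction to ``not all $r_j=0$'' is consistent with it. So $\P[A_\sigma]=(q^{(f)}_{\lambda(\sigma)})^t$, which depends only on the type of $\sigma$.

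\textbf{Step 2: Möbius inversion.} Let $E_\tau$ be the event that the partition of $[n]$ induced by equal histories is exactly $\tau$. Then $\P[A_\sigma]=\sum_{\tau\ge\sigma}\P[E_\tau]$, with $\tau$ coarser than $\sigma$. Möbius inversion on the partition lattice at the bottom element $\hat0$ (all singletons) gives
$$\P[T\le t]=\P[E_{\hat0}]=\sum_\sigma \mu(\hat0,\sigma)\P[A_\sigma].$$
The standard formula is $\mu(\hat0,\sigma)=\prod_{B\in\sigma}(-1)^{|B|-1}(|B|-1)!$. For type $\lambda$ this equals $\prod_j\bigl((-1)^{j-1}(j-1)!\bigr)^{m_j}$. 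The number of set partitions of type $\lambda$ is $n!/\prod_j (j!)^{m_j}m_j!$. Multiplying the two gives
$$n!\prod_j\frac{(-1)^{(j-1)m_j}}{j^{m_j}m_j!},$$
which is the coefficient in the statement. Grouping the sum by type then yields the claimed formula.

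An equivalent route avoids the lattice language. One writes the indicator that the $n$ histories are distinct as $\sum_{\pi\in S_n}\operatorname{sgn}(\pi)\prod_i \mathbf 1[R_i=R_{\pi(i)}]$. This is the standard determinant/cycle-index trick, and cycle types correspond to $\lambda$. I would likely present it this way, since the coefficient $n!/z_\lambda$ times $\operatorname{sgn}$ appears directly.

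\textbf{Main obstacle.} The delicate point is the combinatorial bookkeeping. One must check that the Möbius function, equivalently the sign-weighted permutation count, produces exactly $n!\prod_j (-1)^{(j-1)m_j}/(j^{m_j}m_j!)$. One must also check that $p_\sigma$ depends only on block sizes and matches the normalization $\binom{n}{k}^{-1}$ in $q_\lambda^{(f)}$. Neither check is conceptually hard. The probabilistic content is entirely carried by the optimality of $T$, which is assumed from the preceding discussion.
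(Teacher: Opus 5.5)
Your proposal is correct and follows the paper's proof essentially step for step. The paper likewise reduces to $\P[\Pi_t=\hat 0]$ via optimality of $T$, computes $\P[\Pi_t\ge\pi]=(q_\lambda^{(f)})^t$ by counting unions of blocks, and applies M\"obius inversion on the set-partition lattice with $\mu(\hat0,\pi)=\prod_{B}(-1)^{|B|-1}(|B|-1)!$ and the type count $n!/\prod_j (j!)^{m_j}m_j!$. Your signed-permutation variant is an equivalent repackaging, since $\mu(\hat0,\sigma)$ is exactly the signed number of permutations whose cycle partition is $\sigma$; also note the small slip that the selected set can\emph{not} be empty, because $K\ge1$.
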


\begin{proof}
The shuffle is an inverse riffle shuffle arising from a walk on the faces of the braid arrangement. The standard strong-stationary-time construction for hyperplane walks says that the first time the product of sampled faces is a chamber is optimal \cite{AldousDiaconisShuffling,NestoridiHyperplaneSST}. In the present notation, this is exactly the first time that the row histories $R_1(t),\ldots,R_n(t)$ are all distinct, so $T$ is optimal and $s(t)=\P[T>t]$.

For each $t\ge 1$, define a random partition $\Pi_t$ of $[n]$ by
$$i\sim j \quad \Longleftrightarrow \quad R_i(t)=R_j(t).$$
Thus $\Pi_t$ records which cards have identical selection histories up to time $t$.

By definition, the rows $R_1(t),\dots,R_n(t)$ are all distinct if and only if every block of $\Pi_t$ is a singleton, i.e.\ $\Pi_t=\hat 0$, the discrete partition. Therefore,
$$T\le t \quad \Longleftrightarrow \quad \Pi_t=\hat 0.$$
The optimality of $T$ gives
$$s(t)=\P[T>t]=1-\P[\Pi_t=\hat 0].$$

We compute $\P[\Pi_t=\hat 0]$ by inclusion-exclusion on the lattice $\mathcal P_n$ of set partitions of $[n]$. For any partition $\pi$, written as $\pi:=\{B_1,\dots,B_m\}$, the event $\Pi_t\ge \pi$ means that all cards in each block $B_k$ have identical rows.

We first compute $\P[\Pi_1\ge \pi]$. At a single step, the shuffle selects a nonempty subset $A\subseteq[n]$ by first choosing $K\in[n]$ with law $f$, and then choosing $A$ uniformly among all subsets of size $K$. Thus each nonempty $A$ is chosen with probability
$$\frac{f(|A|)}{\binom{n}{|A|}}.$$
The event $\Pi_1\ge \pi$ occurs if and only if $A$ is a union of blocks of $\pi$, since all elements of a block must either all be selected or all not selected. Therefore,
$$\P[\Pi_1\ge \pi] = \sum_{\emptyset\neq I\subseteq[m]} \frac{f\!\left(\sum_{i\in I}|B_i|\right)} {\binom{n}{\sum_{i\in I}|B_i|}}.$$

This quantity depends only on the block sizes of $\pi$. If $\pi$ has type $\lambda\vdash n$, written as $\lambda:=1^{m_1}2^{m_2}\cdots$, then writing $r_j$ for the number of chosen blocks of size $j$, we obtain
$$q_\lambda^{(f)}:=\sum_{\substack{0\le r_j\le m_j\\ \text{not all }r_j=0}} \frac{\prod_{j\ge 1}\binom{m_j}{r_j}} {\binom{n}{\sum_{j\ge 1}j r_j}} \,f\!\left(\sum_{j\ge 1}j r_j\right).$$

Since the selections at different times are independent, it follows that
$$\P[\Pi_t\ge \pi]=\bigl(q_\lambda^{(f)}\bigr)^t.$$

By inclusion-exclusion,
$$\P[\Pi_t=\hat 0] = \sum_{\pi\in\mathcal P_n} \mu(\hat 0,\pi)\,\P[\Pi_t\ge \pi],$$
where
$$\mu(\hat 0,\pi):=\prod_{B\in\pi}(-1)^{|B|-1}(|B|-1)!.$$

Grouping partitions by type $\lambda$, written as $\lambda:=1^{m_1}2^{m_2}\cdots$, the number of partitions of type $\lambda$ is
$$\frac{n!}{\prod_{j\ge 1}(j!)^{m_j}m_j!},$$
and for any such partition,
$$\mu(\hat 0,\pi) = \prod_{j\ge 1}\bigl((-1)^{j-1}(j-1)!\bigr)^{m_j}.$$
Therefore,
$$\P[\Pi_t=\hat 0] = \sum_{\lambda\vdash n} n!\prod_{j\ge 1}\frac{(-1)^{(j-1)m_j}}{j^{m_j}m_j!}\, \bigl(q_\lambda^{(f)}\bigr)^t.$$
Substituting into $s(t)=1-\P[\Pi_t=\hat 0]$ completes the proof.
\end{proof}

For the asymptotic analysis, define
$$q_2^{(f)}:=\sum_{k=1}^n f(k)\, \frac{\binom{k}{2}+\binom{n-k}{2}}{\binom{n}{2}}, \qquad q_3^{(f)}:=\sum_{k=1}^n f(k)\, \frac{\binom{k}{3}+\binom{n-k}{3}}{\binom{n}{3}}.$$
Thus $q_2^{(f)}$ is the one-step probability that two fixed cards receive the same selection indicator: either both cards are selected or neither card is selected. In the proof below we write $p_n:=q_2^{(f)}$ and call it the two-card agreement probability. Similarly, $q_3^{(f)}$ is the one-step probability that three fixed cards all receive the same selection indicator.

\begin{proof}[Proof of Theorem \ref{thm:intro-general-riffle}]
Let $t:=t_n$, and define
$$N_t:=\sum_{1\le i<j\le n}\mathbf 1_{\{R_i(t)=R_j(t)\}}$$
be the number of pairs of cards with identical rows at time $t$. Then $T\le t$ if and only if $N_t=0$. The goal is to prove that $N_t\to\operatorname{Poisson}(\frac12e^{-c})$. Since the Poisson distribution is determined by its moments \cite[Theorem~30.1]{Billingsley1995}, the method-of-moments theorem \cite[Theorem~30.2]{Billingsley1995} reduces this to showing that the factorial moments of $N_t$, and hence its moments, converge to the corresponding moments of this Poisson random variable.
The proof follows the usual sparse-collision outline. First we compute the mean number of repeated row pairs and choose the time scale so that this mean tends to $\frac12e^{-c}$. Then we show that the higher factorial moments have the same limits as a Poisson random variable with that mean. The disjoint collections of pairs give the Poisson moments, while overlapping collections are negligible because they require three or more named cards to carry the same row history.

Write $p_n:=q_2^{(f)}$, set $c_n:=-2\log n-t_n\log p_n$, and put $h(y):=y^2+(1-y)^2$. By assumption, $c_n\to c$. We first check that the time parameter satisfies $t=O(\log n)$, which will be used when one-step estimates are multiplied over the first $t$ shuffle steps. Since
$$\frac{\binom{K_n}{2}+\binom{n-K_n}{2}}{\binom n2}=Y_n^2+(1-Y_n)^2+O(n^{-1}),$$
we have $p_n=\E[h(Y_n)]+O(n^{-1})$. The convergence $Y_n\to\alpha$ in probability therefore implies $p_n\to\alpha^2+(1-\alpha)^2$. This limit lies in $(0,1)$ because $\alpha\in(0,1)$. Hence $-\log p_n$ converges to a positive finite constant, and the time relation in the theorem gives $t=O(\log n)$.

For distinct $i,j$, condition on the value of $K$. If $K=k$, then the two cards have the same one-step indicator exactly when the selected $k$-set contains both of them or contains neither of them. There are $\binom{k}{2}$ selected pairs and $\binom{n-k}{2}$ unselected pairs among the $\binom n2$ possible pairs, so
$$\P[\xi_i(1)=\xi_j(1)\mid K=k] = \frac{\binom{k}{2}+\binom{n-k}{2}}{\binom{n}{2}}.$$
After averaging over $K$, this probability is the two-card agreement probability $p_n$:
$$\P[\xi_i(1)=\xi_j(1)] = p_n.$$
Since different shuffle steps are independent, the probability that the two full rows agree for all $t$ steps is
$$\P[R_i(t)=R_j(t)]=p_n^t,$$
and summing over the $\binom n2$ pairs gives
$$\E[N_t]=\binom n2 p_n^t.$$
The definition of $c_n$ gives
$$p_n^t=n^{-2}e^{-c_n},$$
so
$$\E[N_t]\to \frac12 e^{-c}.$$

It remains to identify the higher factorial moments. This is the standard route to a Poisson limit: $\E[(N_t)_r]$ counts ordered $r$-tuples of distinct colliding pairs. Fix $r$ and expand $(N_t)_r$ as a sum over ordered $r$-tuples of distinct unordered pairs. The main contribution comes from tuples whose $r$ pairs use $2r$ distinct labels. For one fixed ordered $r$-tuple of disjoint pairs, let $a_{r,n}$ be the one-step probability that every pair in the tuple receives matching selection indicators. The function $h$ is the independent-sampling version of the two-card agreement probability. The actual sampling is without replacement, but conditioning on $K_n$ changes the probability for this fixed disjoint tuple by only $O(n^{-1})$. Therefore
$$a_{r,n}=\E[h(Y_n)^r]+O(n^{-1}).$$
The variance assumption is used to replace $a_{r,n}$ by the $r$th power of $p_n$. Since $0\le Y_n\le1$, we have $1/2\le h(Y_n)\le1$. Thus $h(Y_n)$ is bounded. Moreover, $h'(y)=4y-2$, so $h$ is $2$-Lipschitz on $[0,1]$. Let $Y_n'$ be an independent copy of $Y_n$. We use the identity $\operatorname{Var}(Y_n)=\frac12\E[(Y_n-Y_n')^2]$. The Lipschitz bound gives $|h(Y_n)-h(Y_n')|\le2|Y_n-Y_n'|$, and hence $\operatorname{Var}(h(Y_n))\le4\operatorname{Var}(Y_n)=o(1/\log n)$. 

Now set $X_n:=h(Y_n)$. Since $r$ is fixed and $X_n$ is bounded, the function $x\mapsto x^r$ has bounded second derivative on the range of $X_n$. Writing $X_n^r=(X_n+\E[X_n]-\E[X_n])^r$ gives $$X_n^r=\E[X_n]^r+r \E[X_n]^{r-1}(X_n-\E[X_n])+O((X_n-\E[X_n])^2).$$ Taking expectations removes the linear term and leaves an error $O(\operatorname{Var}(X_n))=o(1/\log n)$. Therefore
$$\E[h(Y_n)^r]=\E[h(Y_n)]^r+o\left(\frac1{\log n}\right).$$
Now $p_n=\E[h(Y_n)]+O(n^{-1})$, and $n^{-1}=o(1/\log n)$. Since $x\mapsto x^r$ is Lipschitz on $[0,1]$, this gives $\E[h(Y_n)]^r=p_n^r+o(1/\log n)$. Combining this with the definition of $a_{r,n}$ gives
$$a_{r,n}=p_n^r+o\left(\frac1{\log n}\right).$$
Because $p_n^r$ is also bounded away from $0$, the same estimate can be written in relative form:
$$a_{r,n}=p_n^r\left(1+o\left(\frac1{\log n}\right)\right).$$
The shuffle steps are independent, and $t=O(\log n)$, hence the probability that all $r$ pair equalities persist for $t$ steps is $a_{r,n}^t=p_n^{rt}(1+o(1))$. There are $(n)_{2r}/2^r=(1+o(1))n^{2r}/2^r$ ordered $r$-tuples of disjoint pairs. Since $p_n^t=n^{-2}e^{-c_n}$, these disjoint tuples contribute
$$\frac{(n)_{2r}}{2^r}p_n^{rt}(1+o(1))\to \left(\frac12e^{-c}\right)^r.$$
Now consider an ordered tuple with an overlap, and let $G$ be the graph whose vertices are the labels appearing in the tuple and whose edges are the selected pairs. If $G$ has $v$ vertices, the row-equality constraints force all labels in each connected component of $G$ to have the same row. Let $b_{G,n}$ be the one-step probability that these row-equality constraints are all satisfied. Since the tuple is not a disjoint union of pairs, at least one connected component has $m\ge3$ vertices. For a component of size $m$, the corresponding one-step probability is $\alpha^m+(1-\alpha)^m+o(1)$. For $m\ge3$,
$$\alpha^m+(1-\alpha)^m<\left(\alpha^2+(1-\alpha)^2\right)^{m/2},$$
by the strict monotonicity of $\ell^p$ norms for the two-point vector $(\alpha,1-\alpha)$. Since there are only finitely many overlap patterns for fixed $r$, the concentration assumption gives a $\delta>0$ such that $b_{G,n}\le p_n^{v/2}(1-\delta)$ for every overlapping pattern with $v$ vertices and all large $n$. There are $O(n^v)$ tuples with this pattern, and therefore their total contribution is at most a constant times
$$n^v p_n^{vt/2}(1-\delta)^t=O(1)(1-\delta)^t=o(1).$$
Thus the overlapping tuples do not contribute. Therefore the factorial moments converge to those of a Poisson random variable with mean $\frac12e^{-c}$, so
$$N_t \to \mathrm{Poisson}\!\left(\frac12 e^{-c}\right).$$

Therefore,
$$\P[T\le t]=\P[N_t=0]\to \exp\!\left(-\frac12 e^{-c}\right),$$
and hence
$$s(t)=\P[T>t]\to 1-\exp\!\left(-\frac12 e^{-c}\right).$$
\end{proof}

\begin{proof}[Proof of Theorem \ref{thm:intro-uniform-riffle}]
Let $\mathcal K_t:=(K_1,\ldots,K_t)$ be the sequence of pile sizes, set $Y_r:=K_r/n$, and write $h(y):=y^2+(1-y)^2$. For one step and a fixed pair of labels, define
$$r_n(k):=\frac{\binom{k}{2}+\binom{n-k}{2}}{\binom{n}{2}}.$$
Conditional on $\mathcal K_t$, set
$$Q_t:=\prod_{r=1}^t r_n(K_r).$$
This is the probability that the pair has identical row histories. Thus, if $N_t$ is the number of pairs of cards with identical row histories, then
$$\Lambda_n:=\E[N_t\mid \mathcal K_t]=\binom n2 Q_t.$$

The key asymptotic, proved below, is the convergence
$$\frac{\log\Lambda_n}{\sqrt{av\log n}}\to Z-c,$$
where $Z$ is standard normal. The conditional birthday estimate below explains why this convergence of the random scale $\Lambda_n$ determines the limiting probability of a collision.
We first record the conditional birthday estimate needed below. A single card has conditional row-history weights indexed by words $w\in\{0,1\}^t$, with
$$p_w:=\prod_{r=1}^t Y_r^{w_r}(1-Y_r)^{1-w_r}.$$
For the collision events entering the first and second moments, the fixed-row-sum probabilities differ in each row from the corresponding independent Bernoulli probabilities by $O(n^{-1})$, uniformly in the pile size. Since $t=O(\log n)$, the products differ by $1+o(1)$. To bound the largest row-history weight, choose in each coordinate the more likely bit: if $Y_r\geq1/2$, choose $w_r=1$, and otherwise choose $w_r=0$. This gives the first equality below. For the asymptotic, $K_r$ is uniform on $[n]$, so $Y_r=K_r/n$ is a bounded triangular approximation to a uniform random variable $U$ on $[0,1]$, and $\E[\log\max(U,1-U)]=2\int_{1/2}^1\log u\,du=\log2-1$. The law of large numbers and $t=a\log n+O(\sqrt{\log n})$ therefore give
$$\log\max_w p_w=\sum_{r=1}^t\log\max(Y_r,1-Y_r)=\left(a(\log2-1)+o_{\P}(1)\right)\log n.$$
Since $a=4/(4-\pi)$, we have $a(\log2-1)<-1$. Hence $\max_w p_w=o_{\P}(n^{-1})$ on the time scale of the theorem.

The usual birthday second moment is therefore valid conditionally on $\mathcal K_t$. Markov's inequality gives $\P[N_t>0\mid\mathcal K_t]\leq\Lambda_n$, while the second moment bound gives
$$\operatorname{Var}(N_t\mid\mathcal K_t)\leq (1+o(1))\Lambda_n+o(1)\Lambda_n^2.$$
Consequently, for every fixed $\delta>0$, the conditional probability $\P[N_t>0\mid\mathcal K_t]$ tends to $0$ on $\{\log\Lambda_n\leq-\delta\sqrt{\log n}\}$ and tends to $1$ on $\{\log\Lambda_n\geq\delta\sqrt{\log n}\}$.

It remains to compute the random scale $\Lambda_n$. Put $h(u):=u^2+(1-u)^2$, and write $X_{n,r}:=\log r_n(K_r)$. Since $r_n(k)=h(k/n)+O(n^{-1})$ uniformly in $k$, and since $K_1/n$ is uniform on the grid $\{1/n,\ldots,1\}$, the mean and variance of $X_{n,1}$ are given by Riemann sums for $\log h(U)$, with an $O(n^{-1})$ error in the mean. The needed closed-form evaluations are
$$\begin{aligned}\int_0^1\log h(u)\,du&=\frac{\pi}{2}-2,\\ \int_0^1\left(\log h(u)-\left(\frac{\pi}{2}-2\right)\right)^2\,du&=4+\pi\log2-4C_{\mathrm{Cat}}-\frac{\pi^2}{4}.\end{aligned}$$
Here $C_{\mathrm{Cat}}:=\sum_{\ell=0}^{\infty}(-1)^\ell/(2\ell+1)^2$ is Catalan's constant. Let
$$\mu:=\frac{\pi}{2}-2,\qquad v:=4+\pi\log2-4C_{\mathrm{Cat}}-\frac{\pi^2}{4}.$$
Then $\E[X_{n,1}]=\mu+O(n^{-1})$ and $\operatorname{Var}(X_{n,1})\to v$. The summands are uniformly bounded, so the triangular-array central limit theorem gives
$$\frac{\sum_{r=1}^t(X_{n,r}-\E[X_{n,1}])}{\sqrt{t\,\operatorname{Var}(X_{n,1})}}\to Z.$$
Since $t=a\log n+bc\sqrt{\log n}+O(1)$, the random normalization satisfies
$$\frac{\sqrt{t\,\operatorname{Var}(X_{n,1})}}{\sqrt{av\log n}}\to1.$$
It remains only to check the deterministic centering. We have
$$\log\binom n2+t\E[X_{n,1}]=2\log n+t\mu+o(\sqrt{\log n}).$$
The constants in the statement are chosen so that $a\mu=-2$ and $b\mu=-\sqrt{av}$. Therefore
$$\frac{\log\binom n2+t\E[X_{n,1}]}{\sqrt{av\log n}}\to -c.$$
Combining the centered central limit theorem with this centering computation proves
$$\frac{\log\Lambda_n}{\sqrt{av\log n}}\to Z-c,$$
where $Z$ is a standard normal random variable.

Combining the conditional birthday estimate with this central limit theorem, for every $\delta>0$,
$$1-\Phi_{\mathrm G}\left(c+\frac{\delta}{\sqrt{av}}\right)\leq\liminf_{n\to\infty}s_n(t_n(c))\leq\limsup_{n\to\infty}s_n(t_n(c))\leq1-\Phi_{\mathrm G}\left(c-\frac{\delta}{\sqrt{av}}\right).$$
Letting $\delta\downarrow0$ proves the claimed profile.
\end{proof}

\begin{remark}
It is often convenient to encode the driving law by the random variable $Y_n:=K_n/n$,
where $K_n$ has distribution $f_n$. Then
$$q_2^{(f)} = \E\bigg[\frac{\binom{K_n}{2}+\binom{n-K_n}{2}}{\binom{n}{2}}\bigg] = \E[Y_n^2+(1-Y_n)^2]+O(n^{-1}),$$
and similarly
$$q_3^{(f)} = \E\bigg[\frac{\binom{K_n}{3}+\binom{n-K_n}{3}}{\binom{n}{3}}\bigg] = \E[Y_n^3+(1-Y_n)^3]+O(n^{-1}).$$
For the concentrated laws in Theorem \ref{thm:intro-general-riffle}, this limit is $\alpha^2+(1-\alpha)^2$. When $Y_n$ has a non-degenerate limit, $q_2^{(f)}$ still gives the annealed pair-collision scale, but it is not by itself enough to justify the order-one profile above.
\end{remark}

\subsubsection{Riffle Shuffle Examples}

The uniformly driven inverse riffle shuffle in Theorem \ref{thm:intro-uniform-riffle} chooses its pile size uniformly from $[n]$, then chooses that many cards uniformly and moves them to the top. If $Y_n:=K_n/n$, then $Y_n$ converges in distribution to a uniform random variable on $[0,1]$, so this model is not covered by the concentrated-pile theorem. In this case
$$q_2^{(f)}=\frac1n\sum_{k=1}^n\frac{\binom{k}{2}+\binom{n-k}{2}}{\binom n2}=\frac{2n-1}{3n}\to\frac23.$$
The annealed pair-collision scale alone does not determine the profile, because different row-collision events remain correlated through the common pile size. For example, the one-step probability that two disjoint prescribed pairs both have equal bits tends to $7/15$, not $(2/3)^2$. This is why Theorem \ref{thm:intro-uniform-riffle} has a Gaussian profile on a $\sqrt{\log n}$ window rather than the order-one row-collision profile in Theorem \ref{thm:intro-general-riffle}.

For fixed $k$-to-top, take $f_n:=\delta_{k_n}$. If $k_n/n\to\alpha\in(0,1)$, then the concentrated-pile theorem applies with
$$q_2^{(f)}=\frac{\binom{k_n}{2}+\binom{n-k_n}{2}}{\binom n2}\to\alpha^2+(1-\alpha)^2.$$
This gives the lattice row-collision profile in Theorem \ref{thm:intro-k-random-to-top}. If instead $k_n\log n/n\to0$, and in particular if $k$ is fixed, the dominant obstruction is no longer a generic row collision but the event that at least two cards have never been selected. This sparse fixed-$k$ regime gives the coupon-collector-type profile proved below in the $k$-random-to-top subsection.

\subsection{Random-to-Top and \texorpdfstring{$k$}{k}-Random-to-Top}

The random-to-top shuffle is the case $k=1$ of the following family; it is the inverse, or time reversal, of the top-to-random shuffle analyzed by Diaconis, Fill, and Pitman \cite{DiaconisFillPitmanTopToRandom}. At each step, choose $k=k_n$ cards uniformly at random without replacement and move the chosen cards to the top of the deck, preserving their relative order. We call this the $k$-random-to-top shuffle. In this subsection, the sparse case means $k_n\log n/n\to0$, while the dense case means $k_n/n\to\alpha\in(0,1)$. This is the fixed-$k$ specialization of the generalized inverse riffle shuffle from the preceding subsection, so the row-history stopping time is optimal for separation. More precisely, let $R_i(t)\in\{0,1\}^t$ record whether card $i$ was chosen at each of the first $t$ steps, and let
$$T_{n,k}:=\inf\{t:\ R_1(t),\ldots,R_n(t)\text{ are all distinct}\}.$$
Then
$$s_{n,k}(t)=\P[T_{n,k}>t].$$

\begin{proof}[Proof of Theorem \ref{thm:intro-k-random-to-top}, sparse case]
Put $t:=\left\lfloor\frac{n}{k}(\ln n+c)\right\rfloor$, and let $U_t$ be the number of cards that have not been selected by time $t$. We first show that $U_t$ converges to a Poisson random variable with mean $e^{-c}$. For fixed $r$,
$$\E[(U_t)_r]=(n)_r\left(\frac{\binom{n-r}{k}}{\binom{n}{k}}\right)^t.$$
Since
$$\log\left(\frac{\binom{n-r}{k}}{\binom{n}{k}}\right)=-\frac{rk}{n}+O\left(\frac{k^2}{n^2}\right),$$
our assumption $k\ln n/n\to0$ gives $k/n\to0$, so the floor error is negligible and
$$\E[(U_t)_r]\to e^{-rc}.$$
By \cite[Theorems~30.1--30.2]{Billingsley1995}, this factorial-moment convergence gives $U_t\to\operatorname{Poisson}(e^{-c})$.

It remains to show that, with high probability, the only repeated row histories come from cards that were never selected. Let $M_t$ be the number of pairs of cards with identical nonzero row histories. For a fixed pair, let
$$a_n:=\frac{k(k-1)}{n(n-1)},\qquad b_n:=\frac{(n-k)(n-k-1)}{n(n-1)}.$$
At a single step, $a_n$ is the probability that both cards are selected and $b_n$ is the probability that neither card is selected. Hence the probability that the two row histories are equal and nonzero is $(a_n+b_n)^t-b_n^t$. Since $1-(1-x)^t\leq tx$,
$$\E[M_t]\leq \binom{n}{2}(a_n+b_n)^t\frac{t a_n}{a_n+b_n}.$$
The first factor $\binom{n}{2}(a_n+b_n)^t$ is bounded on this time scale, while $t a_n=O(k\ln n/n)$ tends to $0$. Thus $\E[M_t]\to0$.

If $U_t\geq2$, then at least two cards have the all-zero row history, so $T_{n,k}>t$. Conversely, if $U_t\leq1$ and $M_t=0$, then all row histories are distinct. Therefore
$$\P[U_t\geq2]\leq s_{n,k}(t)\leq \P[U_t\geq2]+\E[M_t].$$
Taking limits gives
$$s_{n,k}(t)\to\P[\operatorname{Poisson}(e^{-c})\geq2]=1-e^{-e^{-c}}\left(1+e^{-c}\right).$$
\end{proof}

\begin{proof}[Proof of Theorem \ref{thm:intro-k-random-to-top}, dense case]
There is also a dense regime in which the earlier row-collision estimate applies directly. If $k/n\to\alpha\in(0,1)$, set
$$q_{n,k}:=\frac{\binom{k}{2}+\binom{n-k}{2}}{\binom{n}{2}}.$$
Then $q_{n,k}\to \alpha^2+(1-\alpha)^2$, and the triple-collision ratio tends to
$$\frac{\alpha^3+(1-\alpha)^3}{\alpha^2+(1-\alpha)^2}<1.$$
Thus the same factorial-moment argument used for concentrated pile sizes applies with $f:=\delta_k$. If $t_n$ is an integer sequence with $t_n=(2\log n+c+o(1))/(-\log q_{n,k})$, then
$$s_{n,k}(t_n)\to 1-\exp\left(-\frac12e^{-c}\right).$$
So sparse $k$-random-to-top has the same ``one unseen pair'' profile as random-to-top, while dense $k$-random-to-top has the row-collision profile from the generalized inverse riffle shuffle.
\end{proof}

\subsection{Generalized Random Walk on the Hypercube}\label{subsec:hypercube-sst}

This subsection does not contain a new result. The optimal strong stationary time for the simple hypercube refresh chain is standard; see, for example, \cite[Example~6.10]{mix}. We include the generalized-rate version as preliminary material for the comparison and continuity arguments in Section \ref{comp}.

The coordinate-refresh walk on the $n$-dimensional hypercube in continuous time is a Markov chain on $\{0,1\}^n$: when a Poisson-1 clock rings, one of $n$ coordinates is chosen uniformly at random, and its value is updated by an independent fair bit. Equivalently, each coordinate has an independent rate-$\frac1n$ Poisson clock which triggers the refresh of that coordinate.

The generalized random walk allows the coordinate rates to be different. Let $\alpha_k$ be the rate at which coordinate $k$ rings. By reordering coordinates, we can assume that $\alpha_1:=\min_k\alpha_k$. Let $\gamma$ be the number of coordinates that ring with rate $\alpha_1$, and assume that $\gamma^{-\alpha_k/\alpha_1}\to0$ for every coordinate with $\alpha_k>\alpha_1$. Denote this Markov chain by $P_{\alpha}$.

We examine the tails of a strong stationary time: let $T$ be the first time that all coordinate clocks have rung. At time $T$, every coordinate has been replaced by an independent fair bit, so $T$ is a strong stationary time. A direct product-kernel computation, equivalently the optimality of this refresh time, gives
$$s(t)=\P[T>t]=1-\prod_k\left(1-e^{-\alpha_kt}\right).$$
Now we can find the limit profile and prove cutoff at the same time. By definition of the windowed time,
$$\Phi(c):=\lim_{n\to\infty}s\left(\frac{1}{\alpha_1}\left(\ln\left(\gamma\right) +c\right)\right).$$
Substituting the product formula for $s(t)$ gives
$$\Phi(c)=1-\lim_{n\to\infty}\prod_k\left(1-e^{-\alpha_k\frac{1}{\alpha_1}\left(\ln\left(\gamma \right) +c\right)}\right).$$
After collecting the powers of $\gamma$, this becomes
$$\Phi(c)=1-\lim_{n\to\infty}\prod_k\left(1-\frac{e^{\frac{-\alpha_k}{\alpha_1}c}}{\gamma ^{\frac{\alpha_k}{\alpha_1}}}\right)^{\gamma ^{\frac{\alpha_k}{\alpha_1}}\gamma ^{\frac{-\alpha_k}{\alpha_1}}}.$$
Using the exponential limit inside each factor gives
$$\Phi(c)=1-\lim_{n\to\infty}\prod_k\left(e^{-e^{\frac{-\alpha_k}{\alpha_1}c}}\right)^{\gamma^{\frac{-\alpha_k}{\alpha_1}}}.$$
The coordinates with minimal rate contribute the Gumbel factor, while the remaining coordinates are separated into the product
$$\Phi(c)=1-\lim_{n\to\infty}\left(e^{-e^{-c}}\right)\prod_{k:\alpha_k\neq \alpha_1}\left(e^{-e^{\frac{-\alpha_k}{\alpha_1}c}}\right)^{\gamma^{\frac{-\alpha_k}{\alpha_1}}}.$$
By assumption, the exponents in the remaining product vanish, and therefore
$$\Phi(c)=1-e^{-e^{-c}}.$$

\section{Representation Theory and Limit Profiles}\label{sec:transpositions}

In this section we discuss the limit profile of random transpositions on $S_n$. The irreducible representations of $S_n$ are indexed by partitions $\lambda\vdash n$; we write $\chi_\lambda$ for the corresponding character and $d_\lambda:=\chi_\lambda(\mathrm{id})$ for its dimension. When a probability measure on $S_n$ is constant on conjugacy classes, the associated convolution operator acts by scalars on irreducible representations, and character inversion expresses transition probabilities in terms of the characters $\chi_\lambda$. This is the diagonalization used by Diaconis and Shahshahani for random transpositions \cite{DiSh}. Recent work of Jain and Sawhney identifies the total-variation hitting time for this walk as the first time all cards have been touched \cite{JainSawhneyRT}. We will also use standard symmetric-group tools, especially the Murnaghan--Nakayama rule and the hook-length formula; see Sagan's text for these facts \cite[Chapters~3--4]{SaganSymmetricGroup}.

\begin{proof}[Proof of Theorem \ref{thm:intro-rt-profile}]
Let $(Y_m)_{m\geq0}$ be the discrete-time random-transpositions chain, where at each step two labels $I_m,J_m$ are chosen independently and uniformly from $[n]$, and the chain multiplies by the transposition $(I_mJ_m)$, with $(I_mI_m)$ interpreted as the identity. Put
$$m_n(c):=\left\lfloor\frac n2(\ln n+c)\right\rfloor.$$

White constructs a discrete-time strong stationary time $\tau_n$ for $(Y_m)$ by running an auxiliary set partition $\mathcal A_m$ of the labels \cite{White2019}. Initially $\mathcal A_0$ consists of singletons. The update is coupled to the proposed transposition. If the selected labels lie in the same block, the partition is left unchanged. If they lie in different blocks, call the smaller block $A$ and the larger block $B$. The construction first 
moves the entire cycle of the current permutation $Y_m$ that contains the selected label lying in $A$ from $A$ to $B$. It then repeatedly chooses another cycle from the remaining part of $A$, with probability proportional to its size, and moves it to $B$ with probability equal to the ratio of the remaining size of $A$ to the current size of $B$. These probabilities are the partition-combinatorial ``merge'' probabilities in Definition~5 of \cite{White2019}.

White proves two facts about this partition process. First, every block of $\mathcal A_m$ is a union of cycles of the current permutation $Y_m$. Second, conditional on the partition $\mathcal A_m$, arbitrary rearrangements of labels within the blocks are equally likely. Thus, once $\mathcal A_m$ has a single block, the current permutation is uniform on all of $S_n$. If $\tau_n$ denotes the first one-block time, then
$$\P[Y_{\tau_n}=\sigma,\ \tau_n=m]=\frac1{n!}\P[\tau_n=m]\qquad\text{for every }\sigma\in S_n.$$
The profile-scale refinement of White's analysis has the same leading obstruction as the coupon collector for the selected labels. Let $U_m$ be the number of labels not appearing among the first $m$ selected pairs. For fixed $r$, the event that a prescribed ordered $r$-tuple of distinct labels is untouched through time $m$ has probability $(1-r/n)^{2m}$, because both selected labels must avoid that set at every step. Hence
$$\E[(U_{m_n(a)})_r]=(n)_r\left(1-\frac rn\right)^{2m_n(a)}\to e^{-ra}.$$
These are the factorial moments of a $\operatorname{Poisson}(e^{-a})$ random variable. By \cite[Theorems~30.1--30.2]{Billingsley1995}, this factorial-moment convergence gives $U_{m_n(a)}\to\operatorname{Poisson}(e^{-a})$. White's merge estimates show that, apart from untouched labels, the auxiliary partition has coalesced with probability $1-o(1)$ on this scale. Therefore, for every fixed $a\in\mathbb R$,
$$\limsup_{n\to\infty}\P[\tau_n>m_n(a)]\leq1-e^{-e^{-a}}.$$
Since $\tau_n$ is a strong stationary time, this gives the upper bound
$$\limsup_{n\to\infty}s_*^{P_n}(m_n(c))\leq1-e^{-e^{-c}}.$$

It remains to prove the matching lower bound. Let $\gamma_n:=(1\,2\,\cdots\,n)$. 
Since the stationary distribution is uniform and separation is $1-\min_y P_n^m(\mathrm{id},y)/\pi(y)$,
$$s_*^{P_n}(m_n(c))\geq 1-n!P_n^{m_n(c)}(\mathrm{id},\gamma_n).$$

The random-transpositions walk is central. On the irreducible representation indexed by $\lambda\vdash n$, the one-step eigenvalue is
$$p_\lambda:=\frac1n+\frac{2}{n^2}\diag(\lambda),\qquad \diag(\lambda):=\sum_{(r,s)\in\lambda}(s-r).$$
Character inversion for the discrete-time walk \cite{DiSh} gives
$$n!P_n^m(\mathrm{id},\gamma_n)=\sum_{\lambda\vdash n}d_\lambda\chi_\lambda(\gamma_n)p_\lambda^m.$$
The Murnaghan--Nakayama rule says that the character of an $n$-cycle vanishes off hooks, while for hooks \cite[Chapter~4]{SaganSymmetricGroup}
$$\chi_{(n-j,1^j)}(\gamma_n)=(-1)^j,\qquad d_{(n-j,1^j)}=\binom{n-1}{j}.$$
For the same hook, summing contents gives $\diag((n-j,1^j))=\frac{n(n-1)}2-nj$. Substituting into the eigenvalue formula yields
$$p_{(n-j,1^j)}=1-\frac{2j}{n}.$$
Therefore the transition ratio at the $n$-cycle is
$$n!P_n^m(\mathrm{id},\gamma_n)=\sum_{j=0}^{n-1}\binom{n-1}{j}(-1)^j\left(1-\frac{2j}{n}\right)^m.$$
At $m=m_n(c)$ this sum converges to the exponential series for $e^{-e^{-c}}$. Indeed, for each fixed $J$,
$$\sum_{j=0}^{J}\binom{n-1}{j}(-1)^j\left(1-\frac{2j}{n}\right)^{m_n(c)}\to \sum_{j=0}^{J}\frac{(-e^{-c})^j}{j!}.$$
The tail with $J<j\leq n/2$ is uniformly small as $J\to\infty$, because
$$\binom{n-1}{j}\left|1-\frac{2j}{n}\right|^{m_n(c)}\leq \left(\frac{C_c}{j}\right)^j$$
for a constant $C_c$ depending only on $c$. For $j>n/2$, writing $\ell:=n-j$ gives
$$\binom{n-1}{j}\left|1-\frac{2j}{n}\right|^{m_n(c)}\leq \frac{C_c}{n}\frac{C_c^\ell}{(\ell-1)!},$$
and the sum of these terms is $O_c(n^{-1})$. Hence
$$n!P_n^{m_n(c)}(\mathrm{id},\gamma_n)\to e^{-e^{-c}}.$$
Substituting this into the separation lower bound gives
$$\liminf_{n\to\infty}s_*^{P_n}(m_n(c))\geq1-e^{-e^{-c}}.$$
Together with White's upper bound, this proves the limit profile.
\end{proof}

\section{Spectral Study of Limit Profiles}\label{comp}
\subsection{Comparison and Continuity}
We now collect the spectral comparison and continuity tools, followed by the examples they control. Throughout this section, $P$ denotes a reversible transition kernel on a finite state space $\Omega$ with stationary distribution $\pi$. We use the inner product
$$\langle f,g\rangle_\pi:=\sum_{x\in\Omega}f(x)g(x)\pi(x).$$
From \cite[Chapter~12]{mix}, if $p_j$ are the eigenvalues of $P$ and $\{f_j\}$ is an orthonormal eigenbasis in $\ell^2(\pi)$, then the continuous-time kernel generated by $P$ satisfies
$$P^t(x,y)=\pi(y)\sum_j f_j(x)f_j(y)e^{-t(1-p_j)}.$$

\begin{theorem}[Continuous-Time Comparison]\label{lem:continuous-comparison}
    If $P$ and $Q$ are matrices for reversible Markov chains on $\Omega$ and are mutually diagonalizable with shared eigenfunctions $\{f_j\}$ and respective eigenvalues $\{p_j\}$ and $\{q_j\}$, then 
    $$\left|s_x^P(t)-s_x^Q(\bar{t})\right|\leq  \max_{y\in\Omega}\sum_{j=1}^{|\Omega|}\left|f_j(x)f_j(y)\left(e^{-t(1-p_j)}-e^{-\bar{t}(1-q_j)}\right)\right|.$$
\end{theorem}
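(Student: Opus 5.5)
The proof uses the spectral representation directly. Write $H^P_t(x,y):=P^t(x,y)/\pi(y)$, with $P^t$ the continuous-time kernel. By the expansion recalled from \cite[Chapter~12]{mix},
$$H^P_t(x,y)=\sum_j f_j(x)f_j(y)e^{-t(1-p_j)},$$
and analogously for $Q$ at time $\bar t$ with eigenvalues $q_j$. This uses the same eigenfunctions $f_j$, orthonormal in $\ell^2(\pi)$ for the same $\pi$.

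First I check that $\pi$ is common to both chains. Both $P$ and $Q$ have the constant function as an eigenfunction with eigenvalue $1$. Reversibility makes each operator self-adjoint in its own $\ell^2$ space, and the shared orthonormal eigenbasis forces the two spaces to coincide. I will take the shared $\pi$ as part of the hypothesis of mutual diagonalizability, since otherwise ``orthonormal eigenbasis'' is ambiguous. Subtracting the two expansions gives, for every $y$,
$$\bigl|H^P_t(x,y)-H^Q_{\bar t}(x,y)\bigr|\leq \sum_j \bigl|f_j(x)f_j(y)\bigl(e^{-t(1-p_j)}-e^{-\bar t(1-q_j)}\bigr)\bigr| =: D(y).$$

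Next I pass from pointwise ratios to separation. By definition,
$$s_x^P(t)=\max_y\bigl(1-H^P_t(x,y)\bigr)=1-\min_y H^P_t(x,y),$$
and likewise for $Q$. The key elementary fact is that a minimum over a finite set is $1$-Lipschitz in the sup norm:
$$\bigl|\min_y a_y-\min_y b_y\bigr|\leq\max_y|a_y-b_y|.$$
To see this, choose $y_0$ minimizing $b_y$. Then $\min_y a_y-\min_y b_y\leq a_{y_0}-b_{y_0}\leq \max_y|a_y-b_y|$, and the reverse inequality follows by symmetry. Applying this with $a_y=H^P_t(x,y)$ and $b_y=H^Q_{\bar t}(x,y)$ gives
$$|s_x^P(t)-s_x^Q(\bar t)|\leq\max_y D(y),$$
which is the claimed bound.

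The main point of care is the common stationary measure, which justifies using the same $\pi(y)$ in both ratios and the same orthonormalization of the $f_j$. Everything else is the triangle inequality followed by the min-Lipschitz step. The discrete-time Theorem~\ref{dcomp} follows by the identical argument with $e^{-t(1-p_j)}$ replaced by $p_j^t$.
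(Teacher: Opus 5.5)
Your proof is correct and follows the same route as the paper: expand both ratios in the shared eigenbasis with a common $\pi$, compare the two minima at a single terminal state, bound the difference by the maximum over $y$ of the termwise spectral difference, and symmetrize. Your min-is-$1$-Lipschitz formulation is actually the cleaner version of the paper's comparison step. The paper's write-up has $\max_w$ where $\bar w$ is a minimizer of $P^t(x,\cdot)/\pi(\cdot)$; the correct upper bound evaluates both ratios at $\bar w$, using $Q^{\bar t}(x,\bar y)/\pi(\bar y)\le Q^{\bar t}(x,\bar w)/\pi(\bar w)$, which is exactly what your choice of $y_0$ does.
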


\begin{proof}
Suppose that for a given $x$, $s_x^P(t)$ and $s_x^Q(\bar{t})$ are attained by $\bar{w}$ and $\bar{y}$, respectively. Then
$$s_x^P(t)-s_x^Q(\bar{t})=\frac{Q^{\bar{t}}(x,\bar{y})}{\pi_Q(\bar{y})}-\frac{P^t(x,\bar{w})}{\pi_Q(\bar{w})}.$$
Using the shared eigenbasis and the definition of $\bar w$, this becomes
$$s_x^P(t)-s_x^Q(\bar{t})=\sum_{j=1}^{|\Omega|} f_j(x)f_j(\bar{y})e^{-\bar{t}(1-q_j)}-\max_{w\in\Omega}\sum_{j=1}^{|\Omega|} f_j(x)f_j(w)e^{-t(1-p_j)}.$$
The maximum is at least the value at $\bar y$, so
$$s_x^P(t)-s_x^Q(\bar{t})\leq \sum_{j=1}^{|\Omega|} f_j(x)f_j(\bar{y})\left(e^{-\bar{t}(1-q_j)}-e^{-t(1-p_j)}\right).$$
This quantity is bounded above by the maximum over $y$:
$$s_x^P(t)-s_x^Q(\bar{t}) \leq \max_{y\in\Omega}\sum_{j=1}^{|\Omega|} f_j(x)f_j(y)\left(e^{-\bar{t}(1-q_j)} -e^{-t(1-p_j)}\right).$$
Switching $P$ and $Q$ along with the triangle inequality gives
$$|s_x^P(t)-s_x^Q(\bar{t})| \leq \max_{y\in\Omega}\sum_{j=1}^{|\Omega|} \left|f_j(x)f_j(y)\left(e^{-\bar{t}(1-q_j)} -e^{-t(1-p_j)}\right)\right|.$$
\end{proof}

Write $\Omega:=\{x_1,\dots,x_{|\Omega|}\}$. 
Say that $P$ is \textit{transitive} if there is a set of permutations $\{\sigma_k\}_{k=1}^{|\Omega|}$ for which the $k$th row of $P$ can be attained by permuting the $1$st row of $P$ by $\sigma_k$: for any $i$,
$$P(x_k,x_i)=P(x_1,x_{\sigma_k(i)}).$$
Say that $P$ is \textit{perfectly transitive} if the permutations can be chosen so that $\sigma_k=\sigma_{k'}$ implies $k=k'$.

It is a standard fact (see, e.g., \cite[Chapter 1]{mix}) that transitive chains have uniform stationary distribution.

\begin{proposition}
\label{RWs-on-groups-are-pt}
All random walks on a group are perfectly transitive.
\end{proposition}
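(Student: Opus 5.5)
We prove the claim directly from the definitions. Let $G$ be a finite group, $\mu$ a probability measure on $G$, and $P(g,h):=\mu(g^{-1}h)$ the transition matrix of the (left-multiplication) random walk driven by $\mu$; the right-multiplication convention is handled by the symmetric argument. Enumerate $\Omega=G=\{x_1,\dots,x_{|G|}\}$, taking $x_1:=e$ for convenience. The goal is to exhibit, for each $k$, a permutation $\sigma_k$ of the index set with $P(x_k,x_i)=P(x_1,x_{\sigma_k(i)})$ for all $i$, and then to check that the assignment $k\mapsto\sigma_k$ is injective.

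The natural choice is to let $\sigma_k$ be the index permutation induced by left multiplication by $x_1x_k^{-1}$. That is, $\sigma_k(i)$ is the unique index $j$ with $x_j=x_1x_k^{-1}x_i$. This is a bijection of $G$, hence a permutation of $[|G|]$. We then verify
$$P(x_1,x_{\sigma_k(i)})=\mu\bigl(x_1^{-1}x_1x_k^{-1}x_i\bigr)=\mu(x_k^{-1}x_i)=P(x_k,x_i),$$
so $P$ is transitive. Note that the choice of $x_1$ plays no role here, since it cancels.

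For perfect transitivity, suppose $\sigma_k=\sigma_{k'}$. Then $x_1x_k^{-1}x_i=x_1x_{k'}^{-1}x_i$ for every $i$; taking any single $i$ and cancelling gives $x_k=x_{k'}$, hence $k=k'$. In fact $\sigma_k(1)$ alone determines $k$, since $x_{\sigma_k(1)}=x_1x_k^{-1}x_1$.

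The only point needing care is the bookkeeping of conventions. One must make sure the permutation acts on the column index in the same direction as the definition, namely $P(x_k,x_i)=P(x_1,x_{\sigma_k(i)})$ rather than its inverse. For walks defined by right multiplication, $P(g,h)=\mu(hg^{-1})$, the same argument applies with $x_j:=x_ix_k^{-1}x_1$. There is no substantive obstacle beyond this.
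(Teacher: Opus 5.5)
Your proof is correct and is essentially the paper's argument: $\sigma_k$ is defined by multiplying by $x_1x_k^{-1}$ on the left (or by $x_k^{-1}x_1$ on the right), and cancellation gives both that $\sigma_k$ is a permutation and that $k\mapsto\sigma_k$ is injective. The paper's choice $x_{\sigma_k(i)}:=x_ix_k^{-1}x_1$ is exactly your right-multiplication case and silently uses the invariance $P(ga,ha)=P(g,h)$. Your explicit handling of the convention $P(g,h)=\mu(g^{-1}h)$ is therefore slightly more careful, and that is the convention the paper itself uses for its walks on $S_n$.
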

\begin{proof}
    Suppose $P$ is the transition matrix for a random walk on a group $G$. For any $k$, and any $x_i\in G$,
    $$P(x_k,x_i)=P(x_1(x_1^{-1}x_k),x_i).$$
    Right multiplying both arguments by $(x_1^{-1}x_k)^{-1}$ gives
    $$P(x_k,x_i)=P(x_1,x_ix_k^{-1}x_1).$$
    Define $\sigma_k(i)$ by $x_{\sigma_k(i)}:=x_ix_k^{-1}x_1$. This is a permutation because equality of two proposed images gives
    $$x_ix_k^{-1}x_1=x_{i'}x_k^{-1}x_1.$$
    Cancelling the common right factor gives $i=i'$, so the entries of $\sigma_k$ are unique. If $\sigma_k=\sigma_{k'}$, then for some $i$ one has
    $$x_ix_k^{-1}x_1=x_ix_{k'}^{-1}x_1.$$
    Cancelling again gives $k=k'$.
\end{proof}

\begin{lemma}
    \label{lem:bijective-max}
    If $P$ is perfectly transitive, for any $t$ there is a bijection $z$ on $\Omega$ for which
    $$s_x^P(t)=1-\frac{P^t(x,z(x))}{\pi_P(z(x))}.$$
\end{lemma}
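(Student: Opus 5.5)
The plan is to reduce the lemma to finding a perfect matching in a regular bipartite graph.

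\textbf{Step 1: all rows of $P^t$ share a minimum.} The first task is to check that the transitive structure passes from $P$ to $P^t$: every row of $P^t$ is a rearrangement of the first row. For random walks on groups, the case relevant by Proposition \ref{RWs-on-groups-are-pt}, this is immediate. There $P^t(x,y)=\mu^{*t}(x^{-1}y)$ (or the right-multiplication analogue), so the explicit permutations $\sigma_k$ constructed in that proof work for every power. For a general transitive $P$ I would read transitivity as a property of the chain, so that it holds for each $P^t$ and for the continuous-time kernel. Transitive chains have uniform stationary distribution $\pi$. Hence
$$s_x^P(t)=1-|\Omega|\min_y P^t(x,y),$$
and the row minimum $m_t:=\min_y P^t(x,y)$ does not depend on $x$.

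\textbf{Step 2: the minimizer graph is regular.} Form the bipartite graph $G_t$ with two copies of $\Omega$, joining $x$ to $y$ exactly when $P^t(x,y)=m_t$. By Step 1, every row has the same number $r\ge1$ of minimizing entries, so every left vertex has degree $r$.

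For the right vertices I would use that the standing assumptions in this section are reversibility with respect to the uniform measure. Then $P^t$ is symmetric, so $P^t(x,y)=m_t$ if and only if $P^t(y,x)=m_t$. The column containing $y$ therefore has exactly as many minimizing entries as row $y$, namely $r$. So $G_t$ is $r$-regular on both sides.

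\textbf{Step 3: extract the bijection.} By Hall's marriage theorem (equivalently König's theorem), every $r$-regular bipartite graph with $r\ge1$ has a perfect matching. Take one and let $z(x)$ be the partner of $x$. Then $z$ is a bijection of $\Omega$, and $P^t(x,z(x))=m_t$ for every $x$. This gives
$$s_x^P(t)=1-\frac{P^t(x,z(x))}{\pi_P(z(x))}$$
for every $x$ simultaneously, which is the statement of the lemma.

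\textbf{Alternative for groups.} In the group case one can instead write $z$ down explicitly. Pick a minimizer $w$ for the first row and set $z(x_k):=x_{\sigma_k^{-1}(w)}$, which equals $w x_1^{-1}x_k$ in the notation of Proposition \ref{RWs-on-groups-are-pt}. This map is injective in $k$, and this is the mechanism by which perfect transitivity enters.

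\textbf{Main obstacle.} The delicate step is Step 1, transferring the permutation structure from $P$ to $P^t$ without group structure. Perfect transitivity alone only guarantees that the $\sigma_k$ are distinct. It does not guarantee that $\sigma_k^{-1}(w)$ is injective in $k$, which is why I route the argument through symmetry of $P^t$ and regularity rather than through the $\sigma_k$ directly. If the transfer to $P^t$ cannot be justified in full generality, I would restrict the lemma to random walks on groups, where every step above is explicit.
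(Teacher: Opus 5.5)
Your argument is correct given the assumption you flag in Step 1, but it takes a different route from the paper. The paper builds $z$ directly from the row permutations. It fixes a minimizer $x_{j_1}$ of row $1$, transports it by $\sigma_k$, and asserts that $k\mapsto\sigma_k(j_1)$ is injective ``since $P$ is perfectly transitive.''

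As you observe, distinctness of the $\sigma_k$ does not give injectivity of evaluation at a fixed index. Moreover, the index that actually minimizes row $k$ is $\sigma_k^{-1}(j_1)$, not $\sigma_k(j_1)$. The construction is sound for group walks, where it becomes your explicit $z(x_k)=x_{j_1}x_1^{-1}x_k$.

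Your route replaces that step with symmetry of $P^t$ plus K\"onig's theorem on the $r$-regular minimizer graph. It never uses perfect transitivity, only transitivity of $P^t$ and reversibility with respect to the uniform measure. The symmetry hypothesis is not a loss. It is a standing assumption of Proposition \ref{prop:transitive-continuity-comparison}, the only place the lemma is used, and without it the statement can fail.

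For a counterexample, take the doubly stochastic matrix with rows $(0,\frac12,\frac25,\frac1{10})$, $(0,\frac1{10},\frac12,\frac25)$, $(\frac12,0,\frac1{10},\frac25)$ and $(\frac12,\frac25,0,\frac1{10})$. Each row is a permutation of the first, and the entries are distinct, so the $\sigma_k$ are forced and distinct; the chain is perfectly transitive in the paper's sense. Yet at $t=1$ the row minimizers lie in columns $1,1,2,3$, so no bijective $z$ exists.

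Both arguments still rely on transitivity passing from $P$ to $P^t$ and to $e^{-t(I-P)}$, which the paper asserts without proof. As you note, this is automatic for group walks or under the automorphism definition of transitivity. Otherwise it should be taken as part of the hypothesis.
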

\begin{proof}
Suppose $P$ is perfectly transitive and has uniform stationary distribution $\pi_P$. Fix $t$, and then $P^t$ is also perfectly transitive. Define 
$$V^k(\cdot):=1-\frac{P^t(x_k,\cdot)}{\pi_P(\cdot)}=1-P^t(x_k,\cdot)|\Omega|,$$
so for all $k$, there is some $\sigma_k$ for which $V^k(x_i)=V^1(x_{\sigma_k(i)})$.

Choose an arbitrary $x_{j_1}$ satisfying $s^P_{x_1}(t)=V^k(x_{j_1})$ and define $y(x_1):=x_{j_1}$. Then for any $k\neq 1$, define $z(x_k):=x_{\sigma_k(j_1)}$. Since $P$ is perfectly transitive, the map $k\mapsto \sigma_z(j_1)$ injective, so $z$ is also injective (and therefore bijective) into $\Omega$.
\end{proof}

\begin{proposition}\label{prop:transitive-continuity-comparison}
If $P$ and $Q$ are mutually diagonalizable, reversible with respect to the uniform stationary measure, transitive, and $P$ is perfectly transitive, then
$$\left|s_*^P(t)-s_*^Q({\bar{t}})\right|\leq \sum_{j=1}^{|\Omega|}\left|e^{-t(1-p_j)}-e^{-\bar{t}(1-q_j)}\right|.$$
\end{proposition}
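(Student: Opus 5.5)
The plan is to start from Theorem~\ref{lem:continuous-comparison} and average it over the starting state. Transitivity makes $s_x$ independent of $x$, and perfect transitivity (through Lemma~\ref{lem:bijective-max}) lets the maximizing terminal state be chosen as a bijection of the starting state. Together these let us trade the maximum over $y$ for an average over the diagonal pairs $(x,z(x))$.

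\textbf{Step 1: reduce to an average over starting states.} Since $P$ and $Q$ are transitive, each row of $P^t$ (resp.\ $Q^{\bar t}$) is a permutation of the first row. Hence $s_x^P(t)=s_*^P(t)$ and $s_x^Q(\bar t)=s_*^Q(\bar t)$ for every $x$, and in particular
$$s_*^P(t)-s_*^Q(\bar t)=\frac1{|\Omega|}\sum_{x\in\Omega}\bigl(s_x^P(t)-s_x^Q(\bar t)\bigr).$$

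\textbf{Step 2: one-sided bounds with the maximizer fixed by a bijection.} Let $z$ be the bijection from Lemma~\ref{lem:bijective-max} for $P$ at time $t$, so that $s_x^P(t)=1-|\Omega|P^t(x,z(x))$. For each $x$ we have $s_x^Q(\bar t)\ge 1-|\Omega|Q^{\bar t}(x,z(x))$. Using the spectral expansion with $\pi$ uniform,
$$s_x^Q(\bar t)-s_x^P(t)\ \ge\ \sum_j f_j(x)f_j(z(x))\bigl(e^{-t(1-p_j)}-e^{-\bar t(1-q_j)}\bigr).$$
Average over $x$ and apply Cauchy--Schwarz with the bijectivity of $z$:
$$\frac1{|\Omega|}\sum_x|f_j(x)f_j(z(x))|\le \Bigl(\tfrac1{|\Omega|}\sum_x f_j(x)^2\Bigr)^{1/2}\Bigl(\tfrac1{|\Omega|}\sum_x f_j(z(x))^2\Bigr)^{1/2}=\|f_j\|_\pi^2=1.$$
This yields $s_*^P(t)-s_*^Q(\bar t)\le\sum_j|e^{-t(1-p_j)}-e^{-\bar t(1-q_j)}|$.

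\textbf{Step 3: the reverse inequality.} Here we need a bijection realizing the maximizers for $Q$, but $Q$ is only assumed transitive. The first attempt is as follows. Choose a maximizer $y_1$ for $s_{x_1}^Q(\bar t)$ and the transitivity permutations $\tau_k$ of $Q$, and define $w(x_k):=x_{\tau_k(y_1)}$. Each $w(x_k)$ attains $s_{x_k}^Q(\bar t)$, but $w$ may fail to be injective.

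To handle this, choose the permutations from the shared structure. Both chains are functions of the same commuting family of operators, so one can try to use the symmetries of $P$ (which by perfect transitivity form a sharply transitive set) as symmetries of $Q$ as well. Concretely, check that if $P$ and $Q$ share an eigenbasis and the automorphisms $\sigma_k$ of $P$ preserve the eigenspaces, then they also preserve $Q$. This holds in all the applications (random walks on groups), where both chains are convolutions and right translations serve both. With a common injective family, the same argument as in Step 2 gives the other direction, and combining the two inequalities proves the claim.

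Step 3 is the main obstacle, namely securing an injective choice of $Q$-maximizers. If the general argument fails, I would fall back on noting that the sum $\sum_x|f_j(x)f_j(w(x))|$ only needs a bound by $|\Omega|$. This would follow from $\max_y|f_j(y)|^2\le$ something, which is false in general, so a genuine shared perfect-transitivity structure is likely what is really used.
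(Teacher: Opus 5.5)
Your Steps 1--2 are correct and match the paper's argument: average over starting states, take the bijection $z$ of Lemma~\ref{lem:bijective-max}, then apply Cauchy--Schwarz/AM--GM and orthonormality. But $z$ selects \emph{$P$}-minimizers, so this only bounds $s_*^P(t)-s_*^Q(\bar t)$. The gap is Step 3, as you suspect. Your remedy assumes that a sharply transitive family of symmetries of $P$ also consists of symmetries of $Q$. The hypotheses do not give this, since mutual diagonalizability says nothing about permutation symmetries. For instance, take the uniform kernel $P=J$, viewed as a walk on $\mathbb Z/10\mathbb Z$ with uniform increments, and let $Q$ be simple random walk on the Petersen graph. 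They commute and satisfy every hypothesis, yet no regular group of permutations (e.g.\ translations under any labelling) consists of automorphisms of $Q$, because the Petersen graph is not a Cayley graph. So as written, your argument covers only walks on a common group. The paper's own proof does not settle this point either: it uses the $P$-bijection $z$ as the maximizing state for both signs of the difference, which is justified only in the direction of your Step 2.

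The missing observation is that no symmetry is needed; a measure-preserving choice of $Q$-minimizers always exists. Let $B:=e^{-\bar t(I-Q)}$, which is symmetric. By transitivity (which passes to $B$, as your Step 1 already uses), every row of $B$ has the same minimum $m$ and the same number $d$ of entries equal to $m$. By symmetry, so does every column. Hence $D(x,y):=d^{-1}\mathbf{1}\{B(x,y)=m\}$ is doubly stochastic. For every $y$ with $D(x,y)>0$, your Step 2 reasoning with the roles of $P$ and $Q$ swapped gives
$s_x^Q(\bar t)-s_x^P(t)\le\sum_j f_j(x)f_j(y)\bigl(e^{-t(1-p_j)}-e^{-\bar t(1-q_j)}\bigr)$.
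Average over $y$ with weights $D(x,\cdot)$ and over $x$ uniformly, and use
\[
\frac1{|\Omega|}\sum_{x,y}D(x,y)\,|f_j(x)f_j(y)|\le\frac1{2|\Omega|}\sum_{x,y}D(x,y)\bigl(f_j(x)^2+f_j(y)^2\bigr)=\|f_j\|_\pi^2=1.
\]
Equivalently, the bipartite graph of minimizing pairs is $d$-regular, so by Hall's theorem it has a perfect matching. That matching is a bijective choice of $Q$-minimizers that can be fed directly into your Step 2. This closes Step 3 under the stated hypotheses, and by the same argument applied to $P$, it shows that perfect transitivity of $P$ is not needed either.
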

\begin{proof}
With notation as above, $s_x(t)$ doesn't depend on $x$, so adding over $x\in\Omega$ yields
$$\left|s_*^P(t)-s_*^Q(\bar{t})\right||\Omega|\leq \sum_{x\in\Omega}\left|\max_{y\in\Omega}\sum_{j=1}^{|\Omega|}f_j(x)f_j(y)\left(e^{-t(1-p_j)}-e^{-\bar{t}(1-q_j)}\right)\right|.$$

Then with $z$ as defined in Lemma \ref{lem:bijective-max}, the maximizing state for each $x$ can be chosen bijectively, so the preceding display gives
$$\left|s_*^P(t)-s_*^Q(\bar{t})\right| = \frac{1}{|\Omega|}\left|\sum_{x\in\Omega}\sum_{j=1}^{|\Omega|}f_j(x)f_j(z(x))\left(e^{-t(1-p_j)}-e^{-\bar{t}(1-q_j)}\right)\right|.$$
Applying the triangle inequality to the double sum gives
$$\left|s_*^P(t)-s_*^Q(\bar{t})\right| \leq \frac{1}{|\Omega|}\sum_{x\in\Omega}\sum_{j=1}^{|\Omega|}\left|f_j(x)f_j(z(x))\left(e^{-t(1-p_j)}-e^{-\bar{t}(1-q_j)}\right)\right|.$$
Now the spectral term does not depend on $x$, so we can group the sum by $j$:
$$\left|s_*^P(t)-s_*^Q(\bar{t})\right| \leq \frac{1}{|\Omega|}\sum_{j=1}^{|\Omega|}\left|e^{-t(1-p_j)}-e^{-\bar{t}(1-q_j)}\right|\sum_{x\in\Omega}|f_j(x)||f_j(z(x))|.$$
By the AM-GM inequality,
$$\left|s_*^P(t)-s_*^Q(\bar{t})\right|  \leq \frac{1}{|\Omega|}\sum_{j=1}^{|\Omega|}\left|e^{-t(1-p_j)}-e^{-\bar{t}(1-q_j)}\right|\frac{1}{2}\left(\sum_{x\in\Omega}f_j(x)^2+\sum_{x\in\Omega}f_j(z(x))^2\right).$$
Since $z$ is bijective, the two square sums are equal, and this reduces to
$$\left|s_*^P(t)-s_*^Q(\bar{t})\right| \leq \frac{1}{|\Omega|}\sum_{j=1}^{|\Omega|}\left|e^{-t(1-p_j)}-e^{-\bar{t}(1-q_j)}\right|\sum_{x\in\Omega}f_j(x)^2.$$
Because the stationary measure is uniform, the factor $|\Omega|^{-1}$ converts the last sum into a $\pi$-inner product:
$$\left|s_*^P(t)-s_*^Q(\bar{t})\right| \leq \sum_{j=1}^{|\Omega|}\left|e^{-t(1-p_j)}-e^{-\bar{t}(1-q_j)}\right|\sum_{x\in\Omega}|f_j(x)|^2 \pi(x).$$
The eigenvectors are orthonormal in $\ell^2(\pi)$, so
$$\left|s_*^P(t)-s_*^Q(\bar{t})\right| \leq \sum_{j=1}^{|\Omega|}\left|e^{-t(1-p_j)}-e^{-\bar{t}(1-q_j)}\right|\langle f_j,f_j\rangle_\pi =\sum_{j=1}^{|\Omega|}\left|e^{-t(1-p_j)}-e^{-\bar{t}(1-q_j)}\right|.$$
\end{proof}

\begin{proposition}
\label{prop:lim-prof}
    With notation as above, assume $P$ exhibits cutoff at $t_n$ with window $w_n$ with limit profile $\Phi_x$ and $Q$ exhibits cutoff at $\bar{t}_n$ with window $\bar{w}_n$ with limit profile $\bar{\Phi}_x$. For $t:=t_n+cw_n$ and $\bar{t}:=\bar{t}_n+c\bar{w}_n$
    $$\left|\bar{\Phi}_x(c)-\Phi_x(c)\right|\leq\lim_{n\to\infty}\sum_{j=1}^{|\Omega|}\left|e^{-t(1-p_j)}-e^{-\bar{t}(1-q_j)}\right|$$
\end{proposition}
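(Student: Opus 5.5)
The plan is to pass to the limit in the comparison bound already established; no new ingredients are needed. We work in the setting of Proposition \ref{prop:transitive-continuity-comparison}: the chains $P=P_n$ and $Q=Q_n$ are mutually diagonalizable, reversible with respect to the uniform measure, transitive, and $P_n$ is perfectly transitive. In this setting $s_x$ does not depend on $x$, so $\Phi_x=\Phi$ and $\bar\Phi_x=\bar\Phi$, and the statement is just the limiting form of the uniform bound. (If one wants the non-transitive, fixed-$x$ version, the same argument runs with Theorem \ref{lem:continuous-comparison} in place of the proposition. The right-hand side then carries the weights $\max_y|f_j(x)f_j(y)|$, which for orthonormal bases in $\ell^2(\pi)$ are not bounded by $1$ in general. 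So I would state and prove the result under the transitive hypotheses, which is the setting the displayed bound matches.)

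First, fix $c\in\mathbb R$ and set $t=t_n+cw_n$ and $\bar t=\bar t_n+c\bar w_n$. Proposition \ref{prop:transitive-continuity-comparison} then gives, for every $n$,
$$\left|s_*^{P_n}(t)-s_*^{Q_n}(\bar t)\right|\leq \sum_{j=1}^{|\Omega_n|}\left|e^{-t(1-p_j)}-e^{-\bar t(1-q_j)}\right|.$$

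Second, use the triangle inequality to write
$$|\bar\Phi(c)-\Phi(c)|\leq |\bar\Phi(c)-s_*^{Q_n}(\bar t)|+|s_*^{Q_n}(\bar t)-s_*^{P_n}(t)|+|s_*^{P_n}(t)-\Phi(c)|.$$
By the definition of the limit profiles, the first and third terms tend to $0$. The middle term is bounded by the spectral sum from the first step. Letting $n\to\infty$ gives the claimed inequality, with the right-hand side read as $\limsup_{n\to\infty}$ of the spectral sum. This coincides with the stated $\lim$ whenever the limit exists, and is trivially true if it is $+\infty$.

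The only point needing care is bookkeeping, not estimation. The eigenvalues $p_j=p_{n,j}$ and $q_j=q_{n,j}$ and the state space $\Omega=\Omega_n$ all depend on $n$, and the limit of the spectral sum is not assumed to exist. Writing $\limsup$ makes the argument airtight. I do not expect any substantive obstacle, since all the analytic work sits in Proposition \ref{prop:transitive-continuity-comparison}.
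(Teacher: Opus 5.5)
Your proposal is correct and matches the intended argument: the paper gives no separate proof of this proposition and treats it as the limiting form of Proposition \ref{prop:transitive-continuity-comparison} under the transitive hypotheses, which is exactly your triangle-inequality passage to the limit. Your two refinements also tighten the statement as written: reading the right-hand side as a $\limsup$, and noting that the unweighted bound requires the transitive setting (Theorem \ref{lem:continuous-comparison} alone would leave the eigenfunction weights).
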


\begin{proof}[Proof of Proposition \ref{thm:intro-lazy}]
The matrices $P_n$ and $Q_n$ have the same eigenvectors. If $p_{n,j}$ is an eigenvalue of $P_n$, then the corresponding eigenvalue of $Q_n$ is $q_{n,j}:=1-\alpha_n(1-p_{n,j})$. Therefore
$$e^{-\frac{t}{\alpha_n}(1-q_{n,j})}=e^{-t(1-p_{n,j})}.$$
Equivalently, the continuous-time semigroups agree exactly:
$$e^{-\frac{t}{\alpha_n}(I-Q_n)}=e^{-t(I-P_n)}.$$
Thus the transition kernels, and hence the separation distances, are identical at these paired times. Taking $t:=t_n+cw_n$ proves the claim.
\end{proof}

The spectral argument above also gives a simple continuity criterion for separation limit profiles, parallel to the total variation argument of \cite{NestoridiContinuity}. The point to keep separate from the total variation case is that separation is a maximum over terminal states, not an average over them. Thus the natural bound keeps the maximum over $y$ rather than passing immediately to an $\ell^2(\pi)$ norm.

\begin{proof}[Proof of Theorem \ref{thm:intro-continuity-general}]
Fix $c_1<c_2$, and set $\theta_{i,n}:=t_n+c_iw_n$ for $i=1,2$. Applying Theorem \ref{lem:continuous-comparison} to the same chain at the two times $\theta_{1,n}$ and $\theta_{2,n}$ gives
$$|s_{n,x_n}(\theta_{1,n})-s_{n,x_n}(\theta_{2,n})|\leq \max_{y\in\Omega_n}\sum_{j=2}^{|\Omega_n|}|f_{n,j}(x_n)f_{n,j}(y)|\left|e^{-\theta_{1,n}\lambda_{n,j}}-e^{-\theta_{2,n}\lambda_{n,j}}\right|.$$
For each $j$, the Mean Value Theorem applied to $u\mapsto e^{-(t_n+uw_n)\lambda_{n,j}}$ gives some $d_{n,j}\in(c_1,c_2)$ such that
$$\left|e^{-\theta_{1,n}\lambda_{n,j}}-e^{-\theta_{2,n}\lambda_{n,j}}\right|=|c_2-c_1|w_n\lambda_{n,j}e^{-(t_n+d_{n,j}w_n)\lambda_{n,j}}.$$
Since $d_{n,j}\geq c_1$ and $\lambda_{n,j}\geq0$, this is bounded by the same expression with $c_1$ in place of $d_{n,j}$. Therefore
$$|s_{n,x_n}(\theta_{1,n})-s_{n,x_n}(\theta_{2,n})|\leq |c_2-c_1|w_n\max_{y\in\Omega_n}\sum_{j=2}^{|\Omega_n|}|f_{n,j}(x_n)f_{n,j}(y)|\lambda_{n,j}e^{-(t_n+c_1w_n)\lambda_{n,j}}.$$
Letting $n\to\infty$ gives
$$|\Phi_x(c_1)-\Phi_x(c_2)|\leq |c_2-c_1|g_x(c_1).$$
Since $g_x$ is locally bounded, the right-hand side goes to $0$ as $c_2\to c_1$; the same argument with $c_1$ and $c_2$ interchanged handles left limits. Thus $\Phi_x$ is continuous.
\end{proof}

For the transitive chains that appear in our examples, the condition can be stated without the maximum over terminal states. This is where the separation comparison proved above is useful: perfect transitivity lets us choose the maximizing terminal states bijectively, so the spectral expression collapses to an $\ell^1$ sum over eigenvalues.

\begin{proof}[Proof of Theorem \ref{thm:intro-continuity-transitive}]
Fix $c_1<c_2$, and set $\theta_{i,n}:=t_n+c_iw_n$. Proposition \ref{prop:transitive-continuity-comparison}, applied with $P=Q=P_n$ and with times $\theta_{1,n}$ and $\theta_{2,n}$, gives
$$|s_{n,*}(\theta_{1,n})-s_{n,*}(\theta_{2,n})|\leq \sum_{j=2}^{|\Omega_n|}\left|e^{-\theta_{1,n}\lambda_{n,j}}-e^{-\theta_{2,n}\lambda_{n,j}}\right|.$$
The Mean Value Theorem gives $d_{n,j}\in(c_1,c_2)$ with
$$\left|e^{-\theta_{1,n}\lambda_{n,j}}-e^{-\theta_{2,n}\lambda_{n,j}}\right|=|c_2-c_1|w_n\lambda_{n,j}e^{-(t_n+d_{n,j}w_n)\lambda_{n,j}}.$$
As before, $d_{n,j}\geq c_1$ implies
$$|s_{n,*}(\theta_{1,n})-s_{n,*}(\theta_{2,n})|\leq |c_2-c_1|w_n\sum_{j=2}^{|\Omega_n|}\lambda_{n,j}e^{-(t_n+c_1w_n)\lambda_{n,j}}.$$
Taking $n\to\infty$ yields $|\Phi(c_1)-\Phi(c_2)|\leq |c_2-c_1|g(c_1)$, and local boundedness of $g$ gives continuity.
\end{proof}

\subsection{More on the Hypercube}\label{subsec:hypercube-comparison}

The strong-stationary-time argument in Section \ref{subsec:hypercube-sst} already identifies the separation profile for generalized coordinate-refresh walks on the hypercube. We revisit the same chains spectrally because the estimates are useful test cases for the comparison and continuity criteria.

Let $P_\alpha$ be the discrete coordinate-refresh kernel on $\{0,1\}^n$: choose coordinate $k$ with probability $\alpha_k$, where $\sum_k\alpha_k=1$, and replace that coordinate by an independent fair bit. The continuous-time kernel is $H_t^\alpha:=e^{-t(I-P_\alpha)}$. These chains are random walks on the group $\{0,1\}^n$, so they are reversible, transitive, and perfectly transitive by Proposition \ref{RWs-on-groups-are-pt}.

\begin{proposition}\label{gen-rw-eigs}
For $v\in\{0,1\}^n$, the character $f_v(x):=(-1)^{v\cdot x}$ is an eigenvector of $P_\alpha$ with eigenvalue
$$p_v:=1-\sum_k\alpha_kv_k.$$
Consequently, $f_v$ has continuous-time eigenvalue $e^{-t\sum_k\alpha_kv_k}$ for $H_t^\alpha$.
\end{proposition}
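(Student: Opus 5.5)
The plan is to verify the eigenvector equation directly from the definition of $P_\alpha$, then pass to continuous time through the semigroup formula recalled at the start of Section \ref{comp}.

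\textbf{Decomposing $P_\alpha$.} Write $P_\alpha=\sum_k\alpha_kR_k$, where $R_k$ is the kernel that refreshes coordinate $k$:
$$R_k(x,y)=\tfrac12\mathbf 1\{y_i=x_i\text{ for all }i\neq k\}.$$
Then $(R_kf)(x)$ is the average of $f$ over the two states that agree with $x$ off coordinate $k$. Since $\sum_k\alpha_k=1$, it suffices to show that $f_v$ is an eigenvector of each $R_k$ and to identify the eigenvalue.

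\textbf{Action of $R_k$ on the character.} Factor
$$f_v(x)=(-1)^{v_kx_k}\prod_{i\neq k}(-1)^{v_ix_i}.$$
The second factor is unchanged by $R_k$. The first factor is averaged over $x_k\in\{0,1\}$:
\begin{itemize}
\item if $v_k=0$, it is identically $1$, so $R_kf_v=f_v$;
\item if $v_k=1$, the average is $\tfrac12(1+(-1))=0$, so $R_kf_v=0$.
\end{itemize}
In both cases $R_kf_v=(1-v_k)f_v$. Summing with weights $\alpha_k$ gives
$$P_\alpha f_v=\sum_k\alpha_k(1-v_k)f_v=\Bigl(1-\sum_k\alpha_kv_k\Bigr)f_v.$$

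\textbf{Continuous time.} Apply $H_t^\alpha=e^{-t(I-P_\alpha)}$ to $f_v$. On an eigenvector with eigenvalue $p_v$ the generator acts as the scalar $1-p_v=\sum_k\alpha_kv_k$, so
$$H_t^\alpha f_v=e^{-t(1-p_v)}f_v=e^{-t\sum_k\alpha_kv_k}f_v.$$

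\textbf{Orthonormal basis.} For later use with Theorem \ref{lem:continuous-comparison}, note that the $2^n$ characters $f_v$ are orthonormal in $\ell^2(\pi)$ for the uniform $\pi$, since
$$\langle f_v,f_w\rangle_\pi=\prod_i\tfrac12\sum_{x_i\in\{0,1\}}(-1)^{(v_i+w_i)x_i}=\mathbf 1\{v=w\}.$$
So they form the full shared eigenbasis for all the $P_\alpha$ simultaneously.

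No step is a real obstacle. The only care needed is keeping the refresh convention straight: the new bit may equal the old one, which is why the eigenvalue is $1-\sum_k\alpha_kv_k$ rather than $1-2\sum_k\alpha_kv_k$.
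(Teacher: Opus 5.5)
Your proof is correct and takes essentially the same approach as the paper: refreshing a coordinate with $v_k=0$ leaves $f_v$ unchanged, while refreshing one with $v_k=1$ averages it to zero, so $P_\alpha f_v=(1-\sum_k\alpha_kv_k)f_v$, and the continuous-time statement follows by exponentiating $I-P_\alpha$. Your explicit decomposition $P_\alpha=\sum_k\alpha_kR_k$ and the added orthonormality check only spell out the same argument in more detail.
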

\begin{proof}
If the refreshed coordinate $k$ satisfies $v_k=0$, the character is unchanged. If $v_k=1$, averaging over the new fair bit gives conditional expectation $0$. Therefore
$$P_\alpha f_v=\left(\sum_{k:v_k=0}\alpha_k\right)f_v=\left(1-\sum_k\alpha_kv_k\right)f_v.$$
The continuous-time statement follows by exponentiating the eigenvalue of $I-P_\alpha$.
\end{proof}

\paragraph{First-Coordinate Perturbation.}

Fix $b\in(0,1)$, and for all large $n$ set
$$a_n:=\frac1n-\frac{b}{n-1},\qquad \alpha_1:=\frac1n+b,\qquad \alpha_k:=a_n\text{ for }2\leq k\leq n.$$
Let $Q_n$ be this perturbed walk, and let $P_n$ be the uniform walk with all rates $1/n$. The slow coordinates of $Q_n$ have rate $a_n$, so the natural paired times are
$$\bar t_n(c):=a_n^{-1}(\ln(n-1)+c),\qquad t_n(c):=n(\ln n+c).$$
Put $\rho_n:=(\frac1n+b)/a_n$ and $u:=e^{-c}$. A character whose support contains the first coordinate with indicator $i\in\{0,1\}$ and contains $j$ of the remaining coordinates has multiplicity $\binom{n-1}{j}$. Proposition \ref{prop:transitive-continuity-comparison} bounds the comparison error by the sum of the $i=0$ and $i=1$ contributions.

The $i=0$ contribution is exactly
$$S_{0,n}(c):=\left(1+\frac{u}{n-1}\right)^{n-1}-\left(1+\frac{u}{n}\right)^{n-1},$$
which tends to $0$. For the $i=1$ contribution, the triangle inequality gives
$$S_{1,n}(c)\leq ((n-1)e^c)^{-\rho_n}\left(1+\frac{u}{n-1}\right)^{n-1}+\frac{1}{ne^c}\left(1+\frac{u}{n}\right)^{n-1}.$$
Since $\rho_n$ grows linearly in $n$, both terms tend to $0$. Hence
$$\left|s_*^{Q_n}\left(\bar t_n(c)\right)-s_*^{P_n}\left(t_n(c)\right)\right|\to0.$$
The first-coordinate perturbation therefore has the same separation profile as the uniform walk at the paired cutoff times.

\paragraph{Half-Split Perturbations.}

Assume $n$ is even. For $b\in(0,1/2)$, let $Q_n^{(b)}$ be the coordinate-refresh walk with rates
$$\alpha_k:=\begin{cases}\frac{1+2b}{n}, & 1\leq k\leq n/2,\\ \frac{1-2b}{n}, & n/2<k\leq n.\end{cases}$$
The slow coordinates have rate $(1-2b)/n$, so we compare $Q_n^{(b)}$ and $Q_n^{(b')}$ at their paired slow-coordinate times
$$t_{b,n}(c):=\frac{n}{1-2b}\left(\ln\left(\frac n2\right)+c\right),\qquad t_{b',n}(c):=\frac{n}{1-2b'}\left(\ln\left(\frac n2\right)+c\right).$$
Let $N:=n/2$, $L_n:=Ne^c$, and $r_b:=(1+2b)/(1-2b)$. Grouping characters by the number $i$ of fast coordinates and $j$ of slow coordinates in their support, the comparison sum is bounded by
$$\sum_{i=0}^{N}\sum_{j=0}^{N}\binom{N}{i}\binom{N}{j}\left|L_n^{-(r_bi+j)}-L_n^{-(r_{b'}i+j)}\right|.$$
The common slow-coordinate factor separates, and the $i=0$ term vanishes. Thus the preceding display is at most
$$\left(1+L_n^{-1}\right)^N\left(\left(1+L_n^{-r_b}\right)^N-1+\left(1+L_n^{-r_{b'}}\right)^N-1\right).$$
The first factor is bounded, while $NL_n^{-r_b}\to0$ and $NL_n^{-r_{b'}}\to0$ because $r_b,r_{b'}>1$. Hence the comparison error tends to $0$.

\subsubsection{Continuity of Hypercube Limit Profiles}

We now verify Theorem \ref{thm:intro-continuity-transitive} for the same examples. For a coordinate-refresh walk with cutoff scale $t_n+cw_n$, the spectral quantity is
$$B_n(c):=w_n\sum_{v\neq0}\lambda_v e^{-(t_n+cw_n)\lambda_v},\qquad \lambda_v:=\sum_k\alpha_kv_k.$$

For the first-coordinate perturbation, use the notation above and set $x_n:=e^{-c}/(n-1)$. At the scale $\bar t_n(c)=a_n^{-1}(\ln(n-1)+c)$, the $i=0$ part of $B_n(c)$ is
$$e^{-c}\left(1+\frac{e^{-c}}{n-1}\right)^{n-2},$$
which is locally bounded in $c$. The $i=1$ part is
$$x_n^{\rho_n}\left(\rho_n(1+x_n)^{n-1}+(n-1)x_n(1+x_n)^{n-2}\right).$$
On compact sets of $c$, the factor $x_n^{\rho_n}$ tends to $0$ superpolynomially because $\rho_n$ grows linearly in $n$. Hence the first-coordinate family satisfies Theorem \ref{thm:intro-continuity-transitive}.

For the half-split family, keep $N:=n/2$, $L_n:=Ne^c$, and $r_b:=(1+2b)/(1-2b)>1$. The continuity sum is
$$B_n(c):=\sum_{i=0}^{N}\sum_{j=0}^{N}\binom{N}{i}\binom{N}{j}(r_bi+j)L_n^{-(r_bi+j)},$$
with the zero character omitted. The part coming from the $j$ term equals
$$\left(1+L_n^{-r_b}\right)^NNL_n^{-1}\left(1+L_n^{-1}\right)^{N-1},$$
which is locally bounded. The part coming from the $i$ term equals
$$\left(1+L_n^{-1}\right)^Nr_bNL_n^{-r_b}\left(1+L_n^{-r_b}\right)^{N-1},$$
and tends to $0$ locally uniformly because $r_b>1$. Thus the half-split family also satisfies the transitive continuity criterion.

\begin{remark}
The same verification is useful even when the limit profile has not been computed. For any subsequence of generalized hypercube walks whose separation profiles converge pointwise and whose corresponding sums $B_n(c)$ are locally bounded, Theorem \ref{thm:intro-continuity-transitive} implies that the subsequential profile is continuous.
\end{remark}

\subsection{Perturbed Random Walks on \texorpdfstring{$(\mathbb Z/m\mathbb Z)^n$}{(Z/mZ)\string^n}}

Fix $m\geq2$, and write $G_n:=(\mathbb Z/m\mathbb Z)^n$. We consider the following continuous-time coordinate refresh walk. For rates $\alpha_1,\ldots,\alpha_n$ with $\sum_k\alpha_k=1$, coordinate $k$ rings at rate $\alpha_k$ and is replaced by a uniformly chosen element of $\mathbb Z/m\mathbb Z$. Equivalently, this is a random walk on the abelian group $G_n$ whose increments are supported on the coordinate axes. The characters
$$\chi_a(x):=\exp\left(\frac{2\pi i}{m}a\cdot x\right),\qquad a\in G_n,$$
diagonalize every such walk. Writing $\operatorname{supp}(a):=\{k:a_k\neq0\}$, the continuous-time decay rate of $\chi_a$ is
$$\lambda_a:=\sum_{k\in\operatorname{supp}(a)}\alpha_k.$$

First take the uniform walk $P_n$, where $\alpha_k=1/n$ for every $k$. Starting from $x$, the transition ratio is minimized by choosing $y$ with $y_k\neq x_k$ for every coordinate, and therefore
$$s_*^{P_n}(t)=1-\left(1-e^{-t/n}\right)^n.$$
Thus
$$s_*^{P_n}\left(n(\ln n+c)\right)\to1-e^{-e^{-c}}.$$

We now perturb one coordinate. Fix $b\in(0,1)$, and for all sufficiently large $n$ set
$$a_n:=\frac{1}{n}-\frac{b}{n-1},\qquad \alpha_1:=\frac{1}{n}+b,\qquad \alpha_k:=a_n\text{ for }2\leq k\leq n.$$
Let $Q_n$ denote this perturbed walk, and put
$$\rho_n:=\frac{\frac{1}{n}+b}{a_n}.$$
The slow coordinates have rate $a_n$, so the natural cutoff scaling for $Q_n$ is
$$\bar t_n(c):=\frac{1}{a_n}(\ln(n-1)+c).$$
We will use the comparison theorem to show that $Q_n$ has the same separation profile as $P_n$.

\begin{proposition}\label{prop:zm-perturbed-comparison}
For every fixed $c\in\mathbb R$,
$$\left|s_*^{Q_n}\left(\bar t_n(c)\right)-s_*^{P_n}\left(n(\ln n+c)\right)\right|\to0.$$
Consequently, the perturbed walks $Q_n$ have separation limit profile $1-e^{-e^{-c}}$.
\end{proposition}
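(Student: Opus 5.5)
The plan is to apply Proposition \ref{prop:transitive-continuity-comparison} with $P=P_n$, $Q=Q_n$, $t=n(\ln n+c)$ and $\bar t=\bar t_n(c)$. This is legitimate because both walks are random walks on the abelian group $G_n$. They are therefore diagonalized by the same characters $\chi_a$, reversible with respect to the uniform measure (the increments are symmetric), and perfectly transitive by Proposition \ref{RWs-on-groups-are-pt}. The characters are complex, so we either pass to the real orthonormal basis built from $\operatorname{Re}\chi_a$ and $\operatorname{Im}\chi_a$, whose members share the eigenvalue of $\chi_{\pm a}$, or run the same AM--GM argument with $|f_j|^2$. Either way the bound becomes
$$\sum_{a\in G_n}\left|e^{-t\lambda_a^P}-e^{-\bar t\lambda_a^Q}\right|.$$

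Next we group the characters by support. A support consisting of $i\in\{0,1\}$ (whether coordinate $1$ is included) together with $j$ of the remaining $n-1$ coordinates occurs $\binom{n-1}{j}(m-1)^{i+j}$ times. Write $u:=e^{-c}$. Since $e^{-t/n}=u/n$ and $e^{-\bar t a_n}=u/(n-1)$, the $i=0$ contribution is
$$S_0=\sum_j\binom{n-1}{j}(m-1)^j\left|\left(\tfrac{u}{n-1}\right)^j-\left(\tfrac un\right)^j\right|=\left(1+\tfrac{(m-1)u}{n-1}\right)^{n-1}-\left(1+\tfrac{(m-1)u}{n}\right)^{n-1}.$$
This tends to $e^{(m-1)u}-e^{(m-1)u}=0$. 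For $i=1$ we bound each term separately by the triangle inequality:
$$S_1\le (m-1)\left(\tfrac{u}{n-1}\right)^{\rho_n}\left(1+\tfrac{(m-1)u}{n-1}\right)^{n-1}+(m-1)\tfrac un\left(1+\tfrac{(m-1)u}{n}\right)^{n-1}.$$
Both products stay bounded. The prefactor $\tfrac un\to0$, and $\rho_n\ge nb\to\infty$, so $(u/(n-1))^{\rho_n}\to0$. Hence the comparison error tends to $0$.

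Finally, the explicit formula $s_*^{P_n}(t)=1-(1-e^{-t/n})^n$ gives $s_*^{P_n}(n(\ln n+c))\to1-e^{-e^{-c}}$. Combined with the vanishing comparison error, this shows that $Q_n$ has profile $1-e^{-e^{-c}}$ at times $\bar t_n(c)$.

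The main point of care is the real versus complex eigenbasis in Proposition \ref{prop:transitive-continuity-comparison}, together with checking that $a_n>0$ for large $n$. The positivity holds because $a_n=\frac{(1-b)n-1}{n(n-1)}>0$ once $n>1/(1-b)$. The rest is routine binomial summation.
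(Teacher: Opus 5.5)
Your proposal is correct and follows essentially the same route as the paper. You apply Proposition \ref{prop:transitive-continuity-comparison}, group characters by $i\in\{0,1\}$ and $j$ with multiplicity $\binom{n-1}{j}(m-1)^{i+j}$, evaluate $S_0$ exactly as a difference of binomial expansions, bound $S_1$ by the triangle inequality using $\rho_n\to\infty$, and finish with the explicit formula for the uniform walk. Your extra remarks are welcome care: one addresses the complex characters, which the paper's real-eigenbasis comparison glosses over (harmlessly, since $|\chi_a|\equiv1$), and the other checks that $a_n>0$.
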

\begin{proof}[Proof of Proposition \ref{thm:intro-zm-perturbed}]
Both walks are random walks on the abelian group $G_n$, so they are perfectly transitive and are simultaneously diagonalized by the characters. Group the characters according to whether the first coordinate is in their support and according to the number of nonzero coordinates among the remaining $n-1$ coordinates. Thus $i\in\{0,1\}$ records whether $a_1\neq0$, and $j$ records the number of nonzero coordinates among $a_2,\ldots,a_n$. The multiplicity of such characters is $\binom{n-1}{j}(m-1)^{i+j}$. Proposition \ref{prop:transitive-continuity-comparison} gives
$$\left|s_*^{Q_n}\left(\bar t_n(c)\right)-s_*^{P_n}\left(n(\ln n+c)\right)\right|\leq S_{0,n}(c)+S_{1,n}(c),$$
where
$$S_{0,n}(c):=\sum_{j=0}^{n-1}\binom{n-1}{j}(m-1)^j\left|e^{-j(\ln(n-1)+c)}-e^{-j(\ln n+c)}\right|$$
and
$$S_{1,n}(c):=\sum_{j=0}^{n-1}\binom{n-1}{j}(m-1)^{j+1}\left|e^{-(\rho_n+j)(\ln(n-1)+c)}-e^{-(j+1)(\ln n+c)}\right|.$$
Let $u:=(m-1)e^{-c}$. Since $(n-1)^{-j}\geq n^{-j}$, the first sum is
$$S_{0,n}(c)=\left(1+\frac{u}{n-1}\right)^{n-1}-\left(1+\frac{u}{n}\right)^{n-1},$$
which tends to $0$. For the second sum, the triangle inequality gives
$$S_{1,n}(c)\leq (m-1)((n-1)e^c)^{-\rho_n}\left(1+\frac{u}{n-1}\right)^{n-1}+\frac{m-1}{ne^c}\left(1+\frac{u}{n}\right)^{n-1}.$$
Here $\rho_n$ grows linearly in $n$, so the first term tends to $0$, and the second term also tends to $0$. Hence the comparison error tends to $0$. Combining this with the uniform profile proves the result.
\end{proof}

\subsection{Bernoulli--Laplace Diffusion Model}\label{subsec:bernoulli-laplace}

The Bernoulli--Laplace diffusion model is a classical test case for cutoff, beginning with the spectral analysis of Diaconis and Shahshahani \cite{DiaconisShahshahaniBL}. Later work connected the model to shuffling large decks and generalized Bernoulli--Laplace chains \cite{NestoridiWhiteLargeDecks}, proved cutoff for the $o(n)$-swap regime \cite{EskenazisNestoridiBLONSwaps}, computed the total-variation limit profile through diffusion asymptotics \cite{OleskerTaylorSchmidBL}, and recently established cutoff for multi-colour multi-urn generalizations \cite{GoenkaHermonSchmidGeneralisedBL}. We use this section only to illustrate what the separation-continuity criterion gives without identifying the separation profile itself.

Diaconis and Shahshahani define the Bernoulli--Laplace diffusion model using two urns of equal size: initially the left urn contains only red balls and the right urn contains only black balls, and at each step one ball is chosen uniformly from each urn and the two chosen balls are switched \cite{DiaconisShahshahaniBL}. We take $n$ even and use the state space
$$\Omega_n:=\{0,1,\ldots,n/2\},$$
where $x\in\Omega_n$ is the number of red balls in the left urn. Its stationary distribution is the hypergeometric measure $\pi_n(x):=\binom{n/2}{x}^2/\binom{n}{n/2}$, and this chain is reversible.

\begin{proof}[Proof of Proposition \ref{thm:intro-bl-continuity}]
The eigenvalues $p_{n,j}$ are classical \cite{DiaconisShahshahaniBL}: for $0\leq j\leq n/2$, define the eigenvalue gaps
$$\lambda_{n,j}:=1-p_{n,j}=\frac{4j(n-j+1)}{n^2},$$
and the corresponding spherical dimensions
$$d_{n,j}:=\binom{n}{j}-\binom{n}{j-1},$$
where $\binom{n}{-1}:=0$. We use the cutoff scale from \cite{DiaconisShahshahaniBL},
$$t_n+cw_n:=\frac{n}{4}(\ln n+c),\qquad w_n:=\frac{n}{4}.$$

We do not need to compute the limiting separation profile. Suppose that, along some subsequence, $s_{n,n/2}(t_n+cw_n)$ converges pointwise for every $c\in\mathbb R$ to a profile $\Phi(c)$. We will show that $\Phi$ is continuous.

The spherical functions for the Bernoulli--Laplace pair are bounded in absolute value by $1$. Therefore, when Theorem \ref{thm:intro-continuity-general} is applied from the endpoint $x_n=n/2$, its spectral hypothesis is bounded by
$$B_n(c):=\frac{n}{4}\sum_{j=1}^{n/2}d_{n,j}\lambda_{n,j}e^{-\frac{n}{4}(\ln n+c)\lambda_{n,j}}.$$
Fix a compact interval $K\subset\mathbb R$, and choose $A>0$ so that $|c|\leq A$ on $K$. Since $\frac{n}{4}\lambda_{n,j}=\frac{j(n-j+1)}{n}\leq j$ and $d_{n,j}\leq n^j/j!$, we have, uniformly for $c\in K$,
$$B_n(c)\leq \sum_{j=1}^{n/2}j e^{Aj}\frac{n^{j(j-1)/n}}{j!}.$$
For $j\leq n^{1/3}$, the factor $n^{j(j-1)/n}$ is bounded uniformly in $n$, so these terms are dominated by a constant multiple of $\sum_{j\geq1}j e^{Aj}/j!$. For $j>n^{1/3}$, Stirling's bound $j!\geq(j/e)^j$ gives
$$j e^{Aj}\frac{n^{j(j-1)/n}}{j!}\leq j\left(\frac{e^{A+1}n^{(j-1)/n}}{j}\right)^j.$$
The quantity $n^{(j-1)/n}/j$ is uniformly $o(1)$ for $n^{1/3}<j\leq n/2$, therefore 
$$B_n(c) \leq \sum_{j=1}^{n/2}j2^{-j}.$$
 Hence $\sup_{n}\sup_{c\in K}B_n(c)<\infty$. Theorem \ref{thm:intro-continuity-general} therefore implies that every subsequential separation limit profile for the Bernoulli--Laplace diffusion model from the endpoint $n/2$ at the window $\frac{n}{4}(\ln n+c)$ is continuous.
\end{proof}

\subsection{Biased Random Transpositions Shuffle}\label{subsec:biased-rt}

We use the biased random transpositions shuffle of Nestoridi and Yan \cite{NestoridiYan}. Partition the labels as $[n]=A\sqcup B$, and fix $0<b\leq a$ with $a|A|+b|B|=n$. Define a probability measure on labels by
$$\mu_{a,b}(x):= \begin{cases}\frac{a}{n}, & x\in A,\\ \frac{b}{n}, & x\in B.\end{cases}$$
At each step, two labels are chosen independently with law $\mu_{a,b}$ and the corresponding cards are transposed; choosing the same label gives a holding move. We denote the transition kernel by $P_{a,b}$.

\begin{lemma}\label{lem:uniform-brt-commute}
For every partition $[n]=A\sqcup B$ and every admissible pair $a,b$, the kernels $P_{1,1}$ and $P_{a,b}$ commute. They are also self-adjoint for the uniform measure on $S_n$, and hence admit an orthonormal basis of common eigenvectors.
\end{lemma}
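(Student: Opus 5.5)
The plan is to view both kernels as random walks on the group $S_n$ driven by step measures, and to reduce both claims to two facts about those measures: the uniform measure is central, and both measures are symmetric.

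\textbf{Step 1: the step measures.} Write $P_\nu(x,y):=\nu(x^{-1}y)$ for the walk driven by a probability measure $\nu$ on $S_n$, acting on functions by $P_\nu f(x)=\sum_g\nu(g)f(xg)$. For $P_{a,b}$, the step measure $\nu_{a,b}$ is the law of $(I\,J)$ with $I,J$ i.i.d.\ with law $\mu_{a,b}$. It is supported on $\{\mathrm{id}\}$ together with the transpositions, and
$$\nu_{a,b}(\mathrm{id})=\sum_x\mu_{a,b}(x)^2,\qquad \nu_{a,b}((x\,y))=2\mu_{a,b}(x)\mu_{a,b}(y)\quad (x\neq y).$$
In the unbiased case $a=b=1$, this gives $\nu_{1,1}(\mathrm{id})=1/n$ and $\nu_{1,1}(\tau)=2/n^2$ for every transposition $\tau$. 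So $\nu_{1,1}$ is constant on conjugacy classes, i.e.\ it is a class function.

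\textbf{Step 2: commutation.} A direct computation gives $P_\nu P_{\nu'}=P_{\nu*\nu'}$, where $(\nu*\nu')(g)=\sum_h\nu(h)\nu'(h^{-1}g)$. I expect the only point needing care here is keeping the order of convolution consistent with this operator convention. If $\nu$ is a class function, then substituting $h\mapsto gh g^{-1}$ type reindexing (equivalently, $\nu(h)=\nu(g h g^{-1})$ for all $g,h$) shows that $\nu*\nu'=\nu'*\nu$ for every $\nu'$. Applying this with $\nu=\nu_{1,1}$ and $\nu'=\nu_{a,b}$ gives $P_{1,1}P_{a,b}=P_{a,b}P_{1,1}$, and this holds for every partition $A\sqcup B$ and every admissible pair $a,b$.

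\textbf{Step 3: self-adjointness and a common eigenbasis.} Every element in the support of $\nu_{a,b}$ is an involution, so $\nu_{a,b}(g)=\nu_{a,b}(g^{-1})$. Hence
$$P_{a,b}(x,y)=\nu_{a,b}(x^{-1}y)=\nu_{a,b}(y^{-1}x)=P_{a,b}(y,x).$$
A symmetric kernel is self-adjoint in $\ell^2$ of the uniform measure, and by the same argument so is $P_{1,1}$; both are also reversible with respect to the uniform measure. Finally, two commuting self-adjoint operators on the finite-dimensional real inner product space $\ell^2(S_n,\mathrm{unif})$ are simultaneously orthogonally diagonalizable. To see this, note that each eigenspace of $P_{1,1}$ is invariant under $P_{a,b}$, because the two operators commute. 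The restriction of $P_{a,b}$ to each such eigenspace is still self-adjoint, so it can be orthogonally diagonalized there. Taking the union of these bases over all eigenspaces of $P_{1,1}$ gives an orthonormal basis of common eigenvectors.
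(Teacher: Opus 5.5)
Your proposal is correct and is essentially the paper's argument. Your class-function convolution identity is the measure-side version of the paper's observation that $T=\sum_{i<j}R_{(ij)}$ is central in the group algebra, which gives commutation; the fact that the support consists of the identity and involutions gives self-adjointness; and the simultaneous spectral theorem (which you spell out via invariant eigenspaces, whereas the paper just cites it) finishes the proof.
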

\begin{proof}
Let $R_g$ act on functions on $S_n$ by right multiplication, $(R_gf)(\sigma):=f(\sigma g)$. Writing $\mu_i:=\mu_{a,b}(i)$, the biased kernel is
$$P_{a,b}:=\left(\sum_{i=1}^n\mu_i^2\right)I+\sum_{1\leq i<j\leq n}2\mu_i\mu_jR_{(ij)}.$$
The uniform kernel is a linear combination of $I$ and the central group-algebra element $T:=\sum_{i<j}R_{(ij)}$. Since conjugation permutes transpositions, $T$ commutes with every $R_g$, and therefore $P_{1,1}$ commutes with $P_{a,b}$. Each $R_{(ij)}$ is self-adjoint because $(ij)^{-1}=(ij)$, so both kernels are self-adjoint. The finite-dimensional spectral theorem for commuting self-adjoint matrices gives the simultaneous eigenbasis.
\end{proof}

\begin{lemma}\label{lem:rt-spectral-weight}
For each fixed $c\in\mathbb R$,
$$\frac{n(\ln n+c)}{n!}\sum_{\substack{\lambda\vdash n\\ \lambda\neq(n)}} f_\lambda^2\left(\frac{1}{\sqrt{e^{c}n}}\right)^{n-1-\frac{2}{n}\diag(\lambda)}\to0.$$
\end{lemma}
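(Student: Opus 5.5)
The plan is to read the sum spectrally and bound it in three ranges of $\lambda$. Throughout, $f_\lambda=d_\lambda$.

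\textbf{Rewriting the exponent.} By the eigenvalue formula $p_\lambda=\frac1n+\frac{2}{n^2}\diag(\lambda)$ from Section \ref{sec:transpositions},
$$n-1-\tfrac2n\diag(\lambda)=n(1-p_\lambda).$$
Hence each summand equals $d_\lambda^2e^{-\theta_n(1-p_\lambda)}$ with $\theta_n:=\frac n2(\ln n+c)$. The quantity in the lemma is therefore $n(\ln n+c)\bigl(P_{\theta_n}(\mathrm{id},\mathrm{id})-\frac1{n!}\bigr)$ for the continuous-time walk.

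Two crude facts are not enough on their own. Every $\lambda\neq(n)$ has $n(1-p_\lambda)\ge 2$, and $\sum_\lambda d_\lambda^2=n!$. Together these only give the bound $e^{-c}\ln n$, so we must gain more than a logarithm. I would fix a small $\delta>0$ and split the sum over $\lambda\neq(n)$ into three ranges: (i) $\lambda_1=n-k$ with $1\le k\le\delta n$; (ii) $\lambda'_1=n-k$ with $k\le\delta n$, i.e.\ the transposes of range (i); and (iii) everything else, where $\lambda_1,\lambda_1'\le(1-\delta)n$.

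\textbf{Range (iii).} I use $\diag(\lambda)\le\sum_i\binom{\lambda_i}{2}\le\frac{\lambda_1 n}{2}\le\frac{(1-\delta)n^2}{2}$. This gives $1-p_\lambda\ge\delta-\frac1n$, so every term carries a factor at most $(e^cn)^{-(\delta n-1)/2}$. Summing $d_\lambda^2\le n!$ over this range cancels the $1/n!$. The leftover $n(\ln n+c)\,(e^cn)^{-(\delta n-1)/2}$ tends to $0$.

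\textbf{Range (i).} I use the standard Diaconis--Shahshahani estimates. First, $\sum_{\lambda_1=n-k}d_\lambda^2\le\binom nk^2k!$, since $d_\lambda\le\binom nk d_{\lambda\setminus\lambda_1}$ and $\sum_{\mu\vdash k} d_\mu^2=k!$. Second, the content sum gives $n(1-p_\lambda)\ge k(n-k+1)/n\cdot 2\ge 2k(1-\delta)$. The contribution of level $k$ is then at most
$$n(\ln n+c)\,\frac{\binom nk}{(n-k)!}\,(e^cn)^{-k(1-\delta)}\le n(\ln n+c)\,\frac{n^{\delta k}e^{C k}}{k!\,(n-k)!}.$$
Because $k\le\delta n$, the factor $(n-k)!\ge((1-\delta)n/e)^{(1-\delta)n}$ is superexponential. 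It beats $n^{\delta k}\le n^{\delta^2 n}$ and the prefactor, so summing over $k$ gives $o(1)$.

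\textbf{Range (ii).} This is easier. Since $\diag(\lambda')=-\diag(\lambda)$, we get $n(1-p_{\lambda'})=2n-2-n(1-p_\lambda)\ge 2n-2-2k\ge(2-2\delta)n-2$. The same dimension bound with this much larger exponent gives an even smaller contribution.

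\textbf{Main obstacle.} The main obstacle is the bookkeeping in range (i): the prefactor $1/n!$ must be combined correctly with $\binom nk^2k!$ so that the loss $n^{\delta k}$ from the imperfect eigenvalue bound is absorbed by $(n-k)!$. I would first check this at $k=1$, where the term is exactly $(n-1)^2/(e^cn)$. Multiplied by $n(\ln n+c)/n!$, it is visibly $o(1)$. I would then verify that the bound above is summable in $k$ uniformly in $n$. Once $\delta$ is fixed small enough, say $\delta<1/4$, all three ranges give $o(1)$, which proves the lemma.
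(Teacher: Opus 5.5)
Your proof is correct and follows essentially the paper's route. You split by the length of the first row; when $\lambda_1\le(1-\delta)n$ you use a uniform eigenvalue gap of order $n$ together with $\sum_\lambda f_\lambda^2=n!$, and when $\lambda_1=n-k$ is close to $n$ you use the Diaconis--Shahshahani bound $f_\lambda\le\binom{n}{k}f_{\bar\lambda}$. The paper drops the exponential weight entirely in the large-first-row range, since the Plancherel weight $\binom{n}{k}/(n-k)!$ alone beats the prefactor, so your eigenvalue bound there is unnecessary. Likewise, your range (ii) is already covered by your range (iii) estimate, which only used $\lambda_1\le(1-\delta)n$.
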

\begin{proof}
Let $W_n(c)$ denote the spectral sum in the statement. We split the partitions according to the size of the first row. If $\lambda_1\leq0.7n$, then moving boxes upward and leftward shows that $\diag(\lambda)$ is at most the value for the two-row diagram $(\lfloor0.7n\rfloor,n-\lfloor0.7n\rfloor)$. Hence there is an $\eta>0$ such that
$$n-1-\frac{2}{n}\diag(\lambda)\geq\eta n$$
for all sufficiently large $n$ in this range. It follows that the corresponding part of $W_n(c)$ is at most $n!n^{-2}$, since $\sum_{\lambda\vdash n}f_\lambda^2=n!$.

It remains to treat $\lambda_1>0.7n$. Put $j:=n-\lambda_1$. The dimension estimate used by Diaconis and Shahshahani \cite{DiSh} gives
$$f_\lambda^2\leq \frac{n!^2}{\lambda_1!^2j!}.$$
Since the exponential weight is at most $1$ and $n!/\lambda_1!\leq n^j$, the large-first-row contribution is bounded by
$$\sum_{j=1}^{\lfloor0.3n\rfloor}p(j)\frac{n^{2j}}{j!},$$
where $p(j)$ is the number of partitions of $j$. The Hardy--Ramanujan bound $p(j)\leq e^{2.57\sqrt j}$ and monotonicity of the summands on this range give
$$\sum_{j=1}^{\lfloor0.3n\rfloor}p(j)\frac{n^{2j}}{j!}\leq 0.3n\,e^{2.57\sqrt{0.3n}}\frac{n^{0.6n}}{(0.3n)!}.$$
Multiplying this bound by $n(\ln n+c)/n!$ gives a quantity tending to $0$ by Stirling's formula. The small-first-row contribution also vanishes after the same prefactor, so the lemma follows.
\end{proof}

\begin{proof}[Proof of Proposition \ref{thm:intro-brt}]
We take $|A|=n-1$, $|B|=1$, set $\epsilon:=\epsilon_n:=1/n!$, and compare $P_{1,1}$ with $P_{a,b}$ where
$$a:=1+\frac{\epsilon}{n-1},\qquad b:=1-\epsilon.$$
By Lemma \ref{lem:uniform-brt-commute}, the comparison proposition applies once we bound the shared spectral sum.

Nestoridi and Yan's diagonalization gives the discrete-time eigenvalues
$$\xi_{a,b}(\lambda,\mu,\nu):=\frac{a^2|A|+b^2|B|}{n^2}+\frac{2(a^2-ab)}{n^2}\diag(\mu)+\frac{2(b^2-ab)}{n^2}\diag(\nu)+\frac{2ab}{n^2}\diag(\lambda),$$
with multiplicity $f_\lambda f_\mu f_\nu c_{\mu,\nu}^{\lambda}$. In the present case $|B|=1$, so $\nu=(1)$ and $c_{\mu,(1)}^\lambda=1$ exactly when $\mu$ is obtained by removing one corner box from $\lambda$.

Set $q_{\lambda,\mu}:=\xi_{a,b}(\lambda,\mu,(1))$. The uniform eigenvalue does not depend on $\mu$ and is
$$p_\lambda:=\frac1n+\frac{2}{n^2}\diag(\lambda).$$
If $\mu$ is obtained by removing a box of content $r$, then $\diag(\mu)=\diag(\lambda)-r$, with $|r|\leq n$. Since $|\diag(\lambda)|\leq n(n-1)/2$ and $a-1,b-1=O(\epsilon)$, the displayed eigenvalue formula gives
$$|q_{\lambda,\mu}-p_\lambda|\leq C\epsilon$$
for an absolute constant $C$ and all $\lambda,\mu,n$.

Put $L_n:=\ln n+c$, $t_n:=\frac n2L_n$, and $\bar t_n:=\frac{n}{2b}L_n$. The preceding bound and $b=1-\epsilon$ imply
$$\left|\bar t_n(1-q_{\lambda,\mu})-t_n(1-p_\lambda)\right|\leq CnL_n\epsilon.$$
For all sufficiently large $n$, this upper bound is below $1$. Using $|e^{-x}-e^{-y}|\leq |x-y|e^{-\min(x,y)}$ and increasing $C$ if needed, we obtain
$$\left|e^{-t_n(1-p_\lambda)}-e^{-\bar t_n(1-q_{\lambda,\mu})}\right|\leq CnL_n\epsilon\, e^{-t_n(1-p_\lambda)}.$$
The trivial representation contributes $0$, so Proposition \ref{prop:transitive-continuity-comparison} and the branching identity $\sum_{\mu\nearrow\lambda}f_\mu=f_\lambda$ give
$$\left|s_*^{P_{1,1}}(t_n)-s_*^{P_{a,b}}(\bar t_n)\right|\leq C\frac{nL_n}{n!}\sum_{\substack{\lambda\vdash n\\ \lambda\neq(n)}}f_\lambda^2\left(\frac{1}{\sqrt{e^{c}n}}\right)^{n-1-\frac{2}{n}\diag(\lambda)}.$$
The right-hand side tends to $0$ by Lemma \ref{lem:rt-spectral-weight}. This proves the paired-distance statement in Proposition \ref{thm:intro-brt}, and the transfer of subsequential profiles follows immediately.
\end{proof}

\subsection{Random Transpositions with a Tiny Central Perturbation}\label{subsec:tiny-central-rt}

Let $P_n$ be the random transpositions shuffle on $S_n$: from a permutation $\sigma$, choose $i,j\in[n]$ independently and uniformly, and move to $\sigma(ij)$, with $(ii)$ interpreted as the identity. Let $\nu_n$ be a probability measure on $S_n$ which is central and symmetric, meaning that $\nu_n(hgh^{-1})=\nu_n(g)$ and $\nu_n(g^{-1})=\nu_n(g)$ for every $g,h\in S_n$. Let $C_n$ be the random walk which, from $\sigma$, samples $G$ with law $\nu_n$ and moves to $\sigma G$; equivalently, $C_n(\sigma,\tau):=\nu_n(\sigma^{-1}\tau)$. Finally, choose $\epsilon_n\in[0,1]$ with $\epsilon_n=O((n!)^{-1})$, and define the perturbed transition matrix
$$Q_n:=(1-\epsilon_n)P_n+\epsilon_n C_n,$$
so one step of $Q_n$ is a random-transposition step with probability $1-\epsilon_n$ and an arbitrary symmetric central-walk step with probability $\epsilon_n$.

The continuous-time analogue of the character calculation in the proof of Theorem \ref{thm:intro-rt-profile} gives the unperturbed random-transpositions chain separation limit profile $1-e^{-e^{-c}}$ at the scale $\frac{n}{2}(\ln n+c)$. We will show that the central perturbation is asymptotically indistinguishable from random transpositions at this scale, and therefore inherits the same profile. The only estimate we need from the preceding section is the following spectral bound.

\begin{lemma}\label{lem:rt-central-spectral-bound}
For every fixed $c\in\mathbb R$,
$$\frac{n(\ln n+c)}{n!}\sum_{\lambda\vdash n}f_\lambda^2\left(\frac{1}{\sqrt{e^{c}n}}\right)^{n-1-\frac{2}{n}\diag(\lambda)}\to0.$$
\end{lemma}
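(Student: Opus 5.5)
The plan is to reduce this to Lemma \ref{lem:rt-spectral-weight}. The two statements differ only in that the present sum also contains the trivial partition $\lambda=(n)$. So I would split that term off and control it by hand.

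First, evaluate the trivial term. For $\lambda=(n)$ we have $f_{(n)}=1$, and the contents of the one-row diagram are $0,1,\ldots,n-1$. Hence $\diag((n))=n(n-1)/2$, and the exponent is
$$n-1-\frac2n\diag((n))=n-1-(n-1)=0.$$
So the summand equals $1$. After the prefactor, the trivial contribution is $n(\ln n+c)/n!$, which tends to $0$ for each fixed $c$ because $n!$ grows much faster than $n\ln n$.

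Second, the remaining sum over $\lambda\neq(n)$, multiplied by the same prefactor $n(\ln n+c)/n!$, is exactly the quantity in Lemma \ref{lem:rt-spectral-weight}. By that lemma it tends to $0$. Adding the two pieces proves the claim.

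There is no real obstacle, since the work was done in Lemma \ref{lem:rt-spectral-weight}. The only thing to check is that the trivial representation is not dominant, which it is not because of the $1/n!$ normalization. As a sanity check on the mechanism, the first nontrivial partition $(n-1,1)$ has exponent $2$ and $f_{(n-1,1)}^2=(n-1)^2$. Its term is therefore of order $n e^{-c}$, and the prefactor $n\ln n/n!$ again sends it to $0$. Every summand here is killed by the factorial normalization, and Lemma \ref{lem:rt-spectral-weight} (via $\sum_\lambda f_\lambda^2=n!$ and the Diaconis--Shahshahani dimension bounds) shows that the whole sum stays $o(n!/(n\ln n))$.
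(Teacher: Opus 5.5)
Your proposal is correct and is the same argument as the paper's proof. You split off the trivial representation $\lambda=(n)$, which has $f_{(n)}=1$ and exponent $0$ and so contributes $n(\ln n+c)/n!\to0$, and you handle the nontrivial partitions by Lemma \ref{lem:rt-spectral-weight}.
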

\begin{proof}
The nontrivial representations are handled by Lemma \ref{lem:rt-spectral-weight}. The remaining trivial representation contributes only $n(\ln n+c)/n!$, which tends to $0$.
\end{proof}

\begin{proposition}\label{prop:tiny-central-rt}
For every fixed $c\in\mathbb R$,
$$\left|s_*^{Q_n}\left(\frac{n}{2}(\ln n+c)\right)-s_*^{P_n}\left(\frac{n}{2}(\ln n+c)\right)\right|\to0.$$
Consequently,
$$s_*^{Q_n}\left(\frac{n}{2}(\ln n+c)\right)\to1-e^{-e^{-c}}.$$
\end{proposition}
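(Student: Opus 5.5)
The plan is to apply Proposition \ref{prop:transitive-continuity-comparison} to the continuous-time chains generated by $P_n$ and $Q_n$ at the common time $t:=\bar t:=\frac n2(\ln n+c)$, and then to bound the resulting spectral sum with Lemma \ref{lem:rt-central-spectral-bound}.

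\emph{Hypotheses.} Both $P_n$ and $C_n$ are random walks on $S_n$ driven by central, symmetric measures. Hence they are reversible with respect to the uniform measure, and they are transitive and perfectly transitive by Proposition \ref{RWs-on-groups-are-pt}. Since both driving measures are central, each kernel acts as a scalar on every isotypic component $V_\lambda\otimes V_\lambda^*$ of $\ell^2(S_n)$. So $P_n$, $C_n$ and $Q_n$ are simultaneously diagonalized by any orthonormal basis adapted to the isotypic decomposition, and each eigenvalue has multiplicity $f_\lambda^2$. The eigenvalues are
$$p_\lambda=\frac1n+\frac2{n^2}\diag(\lambda),\qquad q_\lambda=(1-\epsilon_n)p_\lambda+\epsilon_n\frac{\hat\nu_n(\lambda)}{f_\lambda},$$
where $\hat\nu_n(\lambda):=\sum_g\nu_n(g)\chi_\lambda(g)$. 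Since $\nu_n$ is symmetric, $\hat\nu_n(\lambda)/f_\lambda$ is real and lies in $[-1,1]$. Therefore $|q_\lambda-p_\lambda|\le2\epsilon_n$ uniformly in $\lambda$.

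\emph{Spectral bound.} Proposition \ref{prop:transitive-continuity-comparison} gives
$$\left|s_*^{Q_n}(t)-s_*^{P_n}(t)\right|\le\sum_{\lambda\vdash n}f_\lambda^2\left|e^{-t(1-p_\lambda)}-e^{-t(1-q_\lambda)}\right|.$$
For each $\lambda$ we use $|e^{-x}-e^{-y}|\le|x-y|e^{-\min(x,y)}$. Here $|x-y|\le2t\epsilon_n=O(n\ln n/n!)$, which tends to $0$, so $e^{-\min(x,y)}\le e\cdot e^{-t(1-p_\lambda)}$ for large $n$. This bounds the difference by a constant times $t\epsilon_n\,e^{-t(1-p_\lambda)}$.

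Next we compute the exponential. We have $t(1-p_\lambda)=\frac{\ln n+c}{2}\bigl(n-1-\frac2n\diag(\lambda)\bigr)$, so
$$e^{-t(1-p_\lambda)}=\left(\frac1{\sqrt{e^cn}}\right)^{n-1-\frac2n\diag(\lambda)}.$$
Using $\epsilon_n\le C/n!$, the whole sum is bounded by a constant times
$$\frac{n(\ln n+c)}{n!}\sum_{\lambda\vdash n}f_\lambda^2\left(\frac1{\sqrt{e^cn}}\right)^{n-1-\frac2n\diag(\lambda)}.$$
This tends to $0$ by Lemma \ref{lem:rt-central-spectral-bound}. The trivial representation needs no special treatment here, because $p_{(n)}=q_{(n)}=1$ and it contributes $0$ anyway.

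\emph{Consequence.} The second claim follows by combining the first with the continuous-time random-transpositions profile $s_*^{P_n}(\frac n2(\ln n+c))\to1-e^{-e^{-c}}$ noted just before the lemma.

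The main point needing care is the simultaneous diagonalization. Proposition \ref{prop:transitive-continuity-comparison} is stated for a shared orthonormal eigenbasis, so we must check that centrality of both measures really yields a common real orthonormal basis in $\ell^2(\pi)$. The route is that the kernels are commuting self-adjoint operators, exactly as in Lemma \ref{lem:uniform-brt-commute}, which gives such a basis. The uniform bound $|\hat\nu_n(\lambda)|\le f_\lambda$ also has to be justified; it follows from $|\chi_\lambda(g)|\le f_\lambda$. Everything else is the elementary exponential estimate and the lemma.
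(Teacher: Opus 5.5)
Your proposal is correct and follows the paper's proof essentially step for step. The steps are the simultaneous diagonalization from centrality, the uniform bound $|q_\lambda-p_\lambda|\le 2\epsilon_n$, an exponential mean-value estimate giving a constant times $t\epsilon_n e^{-t(1-p_\lambda)}$, Proposition \ref{prop:transitive-continuity-comparison}, and then Lemma \ref{lem:rt-central-spectral-bound}. The only cosmetic difference is that you justify $|r_\lambda|\le 1$ via $|\chi_\lambda(g)|\le f_\lambda$, whereas the paper simply notes that both kernels are Markov, so their eigenvalues lie in $[-1,1]$.
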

\begin{proof}[Proof of Proposition \ref{thm:intro-tiny-central-rt}]
The transition matrices $P_n$ and $C_n$ are symmetric random walks on $S_n$, and $C_n$ is central. Hence the irreducible matrix coefficients form a simultaneous orthonormal eigenbasis. On the irreducible representation indexed by $\lambda\vdash n$, the random transpositions eigenvalue is
$$p_\lambda:=\frac{1}{n}+\frac{2}{n^2}\diag(\lambda),$$
with multiplicity $f_\lambda^2$. If $r_\lambda$ is the corresponding eigenvalue of $C_n$, then the perturbed chain has eigenvalue
$$q_\lambda:=(1-\epsilon_n)p_\lambda+\epsilon_nr_\lambda.$$
Since both $P_n$ and $C_n$ are Markov kernels, $|p_\lambda-r_\lambda|\leq2$. Put $t_n(c):=\frac{n}{2}(\ln n+c)$. For $n$ large enough, the mean value theorem gives
$$\left|e^{-t_n(c)(1-q_\lambda)}-e^{-t_n(c)(1-p_\lambda)}\right|\leq 3t_n(c)\epsilon_ne^{-t_n(c)(1-p_\lambda)}.$$
Using Proposition \ref{prop:transitive-continuity-comparison}, and omitting the trivial representation because it contributes $0$, we get
$$\left|s_*^{Q_n}(t_n(c))-s_*^{P_n}(t_n(c))\right|\leq 3t_n(c)\epsilon_n\sum_{\lambda\vdash n}f_\lambda^2e^{-t_n(c)(1-p_\lambda)}.$$
The exponential factor is
$$e^{-t_n(c)(1-p_\lambda)}=\left(\frac{1}{\sqrt{e^{c}n}}\right)^{n-1-\frac{2}{n}\diag(\lambda)}.$$
Since $\epsilon_n=O((n!)^{-1})$, Lemma \ref{lem:rt-central-spectral-bound} shows that the comparison error tends to $0$. Combining this with the random transpositions profile proves the proposition.
\end{proof}

\printbibliography

\end{document}